\newtheorem{Theorem}{Theorem}[section]
\newtheorem{Proposition}[Theorem]{Proposition}
\newtheorem{Lemma}[Theorem]{Lemma}
\newtheorem{Corollary}[Theorem]{Corollary}
\theoremstyle{definition}
\newtheorem{Definition}[Theorem]{Definition}
\newtheorem{Remark}[Theorem]{Remark}
\newcommand{\bTheorem}[1]{
	\begin{Theorem} \label{T#1} }
	\newcommand{\eT}{\end{Theorem}}
\newcommand{\bProposition}[1]{
	\begin{Proposition} \label{P#1}}
	\newcommand{\eP}{\end{Proposition}}
\newcommand{\bLemma}[1]{
	\begin{Lemma} \label{L#1} }
	\newcommand{\eL}{\end{Lemma}}
\newcommand{\bCorollary}[1]{
	\begin{Corollary} \label{C#1} }
	\newcommand{\eC}{\end{Corollary}}
\renewcommand{\(}{\left(}
\renewcommand{\)}{\right)}
\newcommand*{\bigchi}{\mbox{\large$\chi$}}
\renewcommand{\u}{\mathbf{u}}
\newcommand{\U}{\mathbf{U}}
\newcommand{\vv}{\mathbf{v}}
\newcommand{\dta}{\tau}
\newcommand{\ddta}{\,\mathrm{d}\tau}
\newcommand{\iQt}{\int_{\Omega_t}}
\newcommand{\CC}{\mathbf{C}}
\newcommand{\HH}{\mathbf{H}}
\newcommand{\DD}{\mathbf{D}}
\newcommand{\I}{\mathbf{I}}
\newcommand{\TT}{\mathbf{T}}
\newcommand{\tr}[1]{\mathrm{tr}\({#1}\)}
\newcommand{\trr}[1]{\mathrm{tr}({#1})}
\newcommand{\trC}{\mathrm{tr}(\CC)}
\newcommand{\trH}{\mathrm{tr}(\HH)}
\newcommand{\trCim}{\mathrm{tr}(\CC_m^{-1})}
\newcommand{\trCt}{\mathrm{tr}(\CC(t))}
\newcommand{\trCo}{\mathrm{tr}(\CC(0))}
\newcommand{\trlnC}{\mathrm{tr}(\log\CC)}
\newcommand{\trlnCt}{\mathrm{tr}(\log\CC(t))}
\newcommand{\trlnCo}{\mathrm{tr}(\log\CC(0))}
\newcommand{\Du}{\mathrm{D}\u}
\newcommand{\DU}{\mathrm{D}\U}
\renewcommand{\div}[1]{\mathrm{div}\left({#1}\right)}
\newcommand{\dib}[1]{\mathrm{div}\big({#1}\big)}
\newcommand{\p}{\partial}
\newcommand{\na}{\nabla}
\newcommand{\R}{\mathbb{R}}
\newcommand{\td}{\frac{\mathrm{d}}{\mathrm{d}t}}
\newcommand{\dx}{\,\mathrm{d}x}
\newcommand{\dd}{\,\mathrm{d}}
\newcommand{\dt}{\,\mathrm{d}t}
\DeclarePairedDelimiter{\inp}{\langle}{\rangle}
\DeclarePairedDelimiter{\norm}{\|}{\|}
\DeclarePairedDelimiter{\snorm}{|}{|}
\DeclareMathOperator{\spn}{span}
\newcommand\restr[2]{\ensuremath{\left.#1\right|_{#2}}}
\newcommand\bee{\begin{equation}}
\newcommand\eee{\end{equation}}
\newcommand\baa{\begin{equation}\begin{aligned}}
\newcommand\eaa{\end{aligned}\end{equation}}
\newcommand\e{\varepsilon}
\newcommand\de{\delta}
\newcommand\nn{\nonumber}
\def\softd{{\leavevmode\setbox1=\hbox{d}%
		\hbox to 1.05\wd1{d\kern-0.4ex{\char039}\hss}}}
\date{}
\begin{document}

	\title{Existence, regularity and weak-strong uniqueness for the three-dimensional Peterlin viscoelastic model}
	
	\author{Aaron Brunk \and Yong Lu \and M\' aria Luk\' a\v cov\' a-Medvi\softd ov\'a
	}
	
	\date{\today}
	
	\maketitle
	
	\bigskip
	
	\centerline{$^*$ Institute of Mathematics, Johannes Gutenberg-University Mainz}
	
	\centerline{Staudingerweg 9, 55128 Mainz, Germany}
	
	\centerline{abrunk@uni-mainz.de, \quad lukacova@uni-mainz.de}

	\bigskip
	
	\centerline{$^\dagger$ Department of Mathematics, Nanjing University}
	
	\centerline{ 22 Hankou Road, Goulou District, 210093 Nanjing, China}

	\centerline{luyong@nju.edu.cn}

	\maketitle
	
	\bigskip
	
	\begin{abstract}
		\noindent
		In this paper we analyze three-dimensional Peterlin viscoelastic model. By means of a mixed Galerkin and semigroup approach we prove the existence of weak solutions. Further, combining parabolic regularity with the relative energy method we derive a conditional weak-strong uniqueness result.
	\end{abstract}

	\tableofcontents
	
	\section{Introduction}
	Complex viscoelastic fluids find their applications in everyday life. They are used to model polymers, blood or even in food industry.
In this paper we study a model for complex viscoelastic fluids where the Peterlin approximation is applied in order to represent time evolution of the elastic conformation tensor, cf.~Peterlin \cite{Peterlin}. Dilute theory for complex fluids proposes that polymeric molecules can be represented by dumbbells that are suspended in a Newtonian solvent. A dumbbell is characterized by two beads connected by a spring. In general, a nonlinear spring force can be expressed as  $F(\mathbf{R})=\gamma(|\mathbf{R}|^2)\mathbf{R}, $ where $\mathbf{R}$ is the vector connecting the two beads. The Peterlin approximation replaces the length of a spring by an averaged length of springs leading to $F(\mathbf{R})\approx \gamma(\langle |\mathbf{R}|^2 \rangle)\mathbf{R}$, where $\langle \cdot \rangle$ denotes the average over all configurations.
	We note in passing that the well-known Oldroyd-B model is based on the linear Hookean law \cite{Barrett.2018b}, i.e. $F(\mathbf{R})=  H \mathbf{R}$, where $H$ is the spring constant. For detailed modeling we refer to the monographs \cite{Bird1,Bird2,Larsonbook,joseph,renardy2000mathematical,Siginer2014}.  The  free energy of viscoelastic fluid models was discussed by Lelièvre et al. in \cite{cms/1199377557}.

	In typical models for viscoelastic fluids, such as the Oldroyd-B \cite{Barrett.2011, Chupin.2018} or FENE-P \cite{Barrett.2018} model, the diffusive elastic terms are usually omitted yielding a hyperbolic equation for the time evolution of the elastic stress tensor \cite{renardy2000mathematical}. However, if the center-of-mass diffusion of polymer dumbbells is taken into account, a diffusive term arises in the evolution equation of the elastic stress tensor leading to a parabolic model, see e.g. Barrett and S\"uli \cite{Barrett.2011b} and the references therein.
	
	The aim of this paper is to extend and generalize our previous analytical results \cite{LukacovaMedvidova.2015,LukacovaMedvidova.2017} for the two-dimensional Peterlin viscoelastic model to three space dimensions. Besides showing global in time existence of weak solutions we also prove the weak-strong uniqueness result by means of the relative energy method.
	The Peterlin viscoelastic model reads

	\begin{tcolorbox}
		\begin{align}	
		\frac{\partial\u   }{\partial t} + (\u\cdot\nabla)\u  &= \dib{\eta\Du} -\nabla p + \mbox{div}\,\TT, \nonumber \\
		\frac{\partial\CC  }{\partial t} + (\u\cdot\nabla)\CC &= (\nabla\u)\CC + \CC(\nabla\u)^\top + \Phi(\trC)\mathbf{I} -\bigchi(\trC)\CC + \varepsilon\Delta\CC \label{eq:uc_model}, \\
		\mbox{div}\,{\u}&=0,\quad \TT = \trC\CC. \nonumber
		\end{align}
	\end{tcolorbox}
	 Here $\Phi:= \trC + a$ and $\bigchi:=\trC^2 + a\snorm*{\trC}$ for a given $a\geq 0$. System (\ref{eq:uc_model}) is considered on $\Omega\times(0,T)$, where $\Omega\subset\mathbb{R}^3$ is a $C^{2,\beta}$ domain, $\beta\in (0,1)$. It is equipped with the following initial and boundary conditions
	\begin{align}
	\restr{(\u,\CC)}{t=0} = ( \u_0, \CC_0),  \quad  \restr{\u}{\p\Omega} = \mathbf{0}, \quad \restr{\p_n\CC}{\p\Omega} = \mathbf{0}. \label{eq:bc}
	\end{align}
The Peterlin model \eqref{eq:uc_model} consists of the incompressible Navier-Stokes equations for the evolution of velocity $\u$, that are coupled in a nonlinear way to a time evolution of the conformation tensor $\CC$. The conformation tensor $\CC$
is related to the mean deformation of the polymer molecules and
 models the elastic effects of a viscoelastic fluid. Further, the functions $\Phi,\bigchi$ represent generalized relaxation terms. The diffusion term $\varepsilon\Delta\CC$ models the center-of-mass diffusion of the polymer chains. The remaining terms of the evolution equation for the conformation tensor represent the upper convected derivative. This is a frame invariant time derivative for matrices, see \cite{joseph} for more details. The coupling with the Navier-Stokes equations is realized via the elastic stress tensor denoted by $\TT$.

 Mathematical literature dealing with analysis of viscoelastic fluid models is very broad. Existence and uniqueness of strong $L^p$ solutions for large times and small data or local well-posedness for large data was studied by Geissert et al. in \cite{Geissert2012} for generalized Oldroyd-B or Peterlin models. Summary of results of local existence of strong solutions can be found, e.g., in \cite{FERNANDEZCARA2002,Guillope.1990} and the references therein.  
Lei et al.~\cite{lei2007b,lei2007a} and Lin et al.~\cite{Lin2005}  obtained the  local and global existence results for smooth solutions of  the Oldroyd-B type models based on time evolution of the deformation tensor. In the context of the deformation tensor the global existence in critical spaces was studied by Zhand and Fang in \cite{Zhang2012}. Applying the Lagrangian formulation the global well-posedness result was obtained by He and Xu in \cite{He2010}. In  \cite{Chupin.2018} Chupin proved the existence of global strong solutions for some viscoelastic models  deriving suitable bounds on the stress tensor.
Global existence of weak solutions is a more delicate problem. Lions and Masmoudi proved in \cite{LIONS2000} global existence of two- and three-dimensional co-rotational Oldroyd-B model. However, in the evolution equation for the elastic stress tensor $\na\u$ is replaced by $\tfrac{1}{2}(\na\u-\na\u^\top)$.
Chemin and Mamsoudi analyzed in \cite{Chemin2001} local and global well-posedness in critical Besov spaces. Non-blow up criteria for the Oldroyd-B models were presented by Lei et al.~\cite{Lei2010} and Chen and Zhang \cite{Chen2006}.
Diffusive  Oldroyd-B model has been studied by Constantin and Kliegl \cite{Constantin.2012} and Barrett and Boyaval in \cite{Barrett.2011}. Global existence of weak solutions has been proven for two-dimensional viscoelastic fluids.  Global existence of generalized, the so-called dissipative solutions in three space dimensions, was investigated in \cite{Kalousek2019} and weak-strong uniqueness result was shown.

The FENE-P model, that is based on the Peterlin approximation and a finitely extensible nonlinear elastic (FENE) spring potential has been studied by Masmoudi in \cite{Masmoudi.2011} and global existence of weak solutions has been shown in three space dimensions.
Existence of weak solutions for diffusive macro-micro model based on the FENE or Hookean spring was studied by Barrett and Süli in \cite{Barrett.2011b,Barrett.2012}, see also \cite{Barrett.2016b,Barrett.2016,Lu.2018} for further developments for compressible viscoelastic fluids.

We conclude this introductory part by referring to a recent work of Bathory, Buli\v{c}ek and M\'alek \cite{Bathory.2020}, where global in time existence of weak solution {was shown} for a special rate-type fluids in three space dimensions.

The diffusive Peterlin model has been studied from analytical and numerical point of view in our recent works \cite{Gwiazda2018,LukacovaMedvidova.2015,LukacovaMedvidova.2017a,LukovMedvidov2017b}.
However, due to missing a priori estimates for the conformation tensor, all of these results are restricted to two-dimensional viscoelastic flows. In the present paper we combine techniques from \cite{Barrett.2018} and \cite{LukacovaMedvidova.2017} in order to extend the global existence result to three space dimensions.

	Moreover, we analyze the properties and regularity of the conformation tensor. It turns out that the physically relevant positive-(semi) definiteness, see \cite{Hulsen.1990}, is crucial to obtain the existence result in three space {dimensions}. Furthermore, we prove a weak-strong uniqueness result which is rarely seen in this context except from \cite{Barrett.2017}. In contrast to the Oldroyd-B model which is studied in \cite{Barrett.2017}  our model is more complex and  needs a special treatment. \\
	
	The structure of this paper is as follows. In Section 2 we introduce suitable {notations} and recall some necessary analytical tools. In Section 3 we state the concept of weak {solutions} which we use and formulate the main results. Section 4 deals with the existence proof and the energy inequality, while Section 5 focuses on the parabolic regularity and the energy equality. Finally, in Section 6 we apply the previous results in a relative energy method to obtain the conditional weak-strong uniqueness result. Section 7 illustrates a possible application of the relative energy method in the context of convergence of numerical schemes.

	\section{Preliminaries}
	In this section we introduce {suitable notations} and the theoretical framework for the upcoming analysis of the Peterlin viscoelastic model (\ref{eq:uc_model}). Let $d\in\{2,3\}$ denote the space dimension. We denote by $\Omega_T:=\Omega\times(0,T)$ and $\Omega_t:=\Omega\times(0,t)$ the full space-time cylinder and the intermediate space-time cylinder, respectively. The norm of the Lebesgue space $L^p(\Omega)$ is denoted by $\norm*{\cdot}_p$ and the norm of the Bochner space $L^p(0,T;L^q(\Omega))$ by $\norm*{\cdot}_{L^p(L^q)}$. Further, we set
	\begin{equation*}
	L^2_{\text{div}}(\Omega):= \overline{C_{0,\text{div}}^\infty(\Omega)}^{\norm*{\cdot}_2} \text{ and }  L^2(\Omega)^{d\times d}_{SPD}:=\{\DD\in L^2(\Omega)^{d\times d} \mid \vv^T\DD\vv > 0, \; \forall \vv\neq 0\in L^\infty(\Omega)^d\}.
	\end{equation*}
	 Here $C_{0,\text{div}}^\infty(\Omega)$ stands for the set of smooth divergence free functions that are compactly supported in $\Omega$.  We use the standard {notations} for the Sobolev spaces and set
	\begin{equation*}
	V:=H^1_{0,\text{div}}(\Omega)^d, \hspace{3.5em} H:=L^2_{\text{div}}(\Omega)^d.
	\end{equation*}
	Dual spaces of $ H^1_{0}(\Omega), W_{0}^{1,p}(\Omega), V$ are denoted by $H^{-1}(\Omega), W^{-1,p^*}(\Omega), V^*$,  respectively. The deformation gradient is the symmetric part of the velocity gradient given by
	\begin{equation*}
	    \Du = \frac{1}{2}(\na\u+\na\u^\top).
	\end{equation*}

	\begin{Proposition}[$L^p$-Matrix norm \cite{Mizerova.2015}]
		\label{prop:matineq}
		For a matrix valued function $\DD\in\R^{d\times d}$ and $p \geq 2$ we have
		\begin{equation*}
		\norm*{\mathrm{tr}(\DD)}^p_{p} := \int_\Omega \left(\sum_{i=1}^d \DD_{ii}\right)^p \dx \leq d^{p-1}\int_\Omega \sum_{i,j=1}^d \snorm*{\DD_{ij}}^p \dx=: d^{p-1}\norm*{\DD}^p_p.
		\end{equation*}
		For symmetric positive(-semi) definite matrices both norms are equivalent, i.e.
		\begin{equation*}
		\norm*{\DD}^p_p \leq \norm*{\mathrm{tr}(\DD)}^p_{p}  \leq d^{p-1}\norm*{\DD}^p_p.
		\end{equation*}
	\end{Proposition}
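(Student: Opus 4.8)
The plan is to reduce both estimates to pointwise inequalities for the entries of the matrix field $\DD$ and then integrate over $\Omega$; apart from convexity and the sign structure of the entries of a symmetric positive semidefinite matrix, nothing is needed. For the first inequality I would fix $x\in\Omega$, abbreviate $d_i:=\DD_{ii}(x)$, and use that $s\mapsto\snorm*{s}^p$ is convex on $\R$ (since $p\ge 1$): Jensen's inequality for the uniform average of $d_1,\dots,d_d$ gives $\snorm*{\tfrac1d\sum_{i=1}^d d_i}^p\le\tfrac1d\sum_{i=1}^d\snorm*{d_i}^p$, whence
\[
\Bigl(\sum_{i=1}^d d_i\Bigr)^p\le d^{p-1}\sum_{i=1}^d\snorm*{d_i}^p\le d^{p-1}\sum_{i,j=1}^d\snorm*{\DD_{ij}(x)}^p,
\]
the last step merely adding the nonnegative off-diagonal contributions. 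Integrating in $x$ yields $\norm*{\mathrm{tr}(\DD)}_p^p\le d^{p-1}\norm*{\DD}_p^p$, which is simultaneously the first assertion and the upper bound in the claimed equivalence. (If one does not want to assume $\mathrm{tr}(\DD)\ge 0$, one replaces $(\cdot)^p$ by $\snorm*{\cdot}^p$ throughout; for the conformation tensor, which will be positive definite, the point is moot.)

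\textbf{The lower bound.}
For the remaining inequality I would use that, when $\DD(x)$ is symmetric positive semidefinite, every principal $2\times 2$ minor is nonnegative, so $\snorm*{\DD_{ij}(x)}^2\le\DD_{ii}(x)\DD_{jj}(x)$ for all $i,j$ and in particular $d_i:=\DD_{ii}(x)\ge 0$. Then, pointwise,
\[
\sum_{i,j=1}^d\snorm*{\DD_{ij}(x)}^p\le\sum_{i,j=1}^d(d_id_j)^{p/2}=\Bigl(\sum_{i=1}^d d_i^{p/2}\Bigr)^2\le\Bigl(\sum_{i=1}^d d_i\Bigr)^p=\bigl(\mathrm{tr}(\DD(x))\bigr)^p,
\]
where the middle inequality is the elementary comparison $\sum_i a_i^q\le\bigl(\sum_i a_i\bigr)^q$ for $a_i\ge 0$ and $q=p/2\ge 1$. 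Integrating over $\Omega$ gives $\norm*{\DD}_p^p\le\norm*{\mathrm{tr}(\DD)}_p^p$, which together with the previous paragraph establishes the equivalence of the two norms on symmetric positive semidefinite fields.

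\textbf{Main obstacle.}
There is no difficulty of substance here. The only place calling for a little care is the exponent bookkeeping in the lower bound, where the estimate must be routed through $\snorm*{\DD_{ij}}^2\le\DD_{ii}\DD_{jj}$ and the $\ell^{p/2}$--$\ell^1$ comparison precisely so that the constant in front is $1$ and not some power of $d$. Since all intermediate inequalities hold pointwise in $x$, measurability and integrability are immediate and the passage to integrals is routine.
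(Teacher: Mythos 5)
Your proof is correct. The paper itself gives no argument for this proposition --- it is quoted from \cite{Mizerova.2015} --- so there is nothing to compare line by line, but your two pointwise estimates are exactly the standard ones: Jensen (or power-mean) convexity of $s\mapsto\snorm*{s}^p$ for the upper bound with constant $d^{p-1}$, and the principal-minor inequality $\snorm*{\DD_{ij}}^2\le\DD_{ii}\DD_{jj}$ combined with the superadditivity $\sum_i a_i^{q}\le(\sum_i a_i)^{q}$ for $q=p/2\ge 1$ (this is precisely where $p\ge 2$ enters) for the lower bound with constant $1$. The exponent bookkeeping in the second part, $\sum_{i,j}(d_id_j)^{p/2}=(\sum_i d_i^{p/2})^2\le(\sum_i d_i)^p$, is handled correctly, and your remark that the trace should carry an absolute value unless $\DD$ is positive semidefinite is a fair observation about the statement as printed.
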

	The norm $\norm*{\mathrm{tr}(\DD)}_p$ is the so-called trace norm. We denote by $\CC:\DD =  \sum_{i,j=1}^d \CC_{ij}\DD_{ij}$ the Frobenius inner product of two matrices. For higher order tensors, such as gradients of matrices we use the inner product $\na\CC::\na\DD=\sum_{k=1}^{d} \partial_{x_{k}}\CC:\partial_{x_{k}} \DD$.
	
	\begin{Definition}
		Let $\DD(t)\in\R^{d\times d}$  be a symmetric matrix function that is diagonalized as follows
		 \begin{equation*}
		\DD(t) = \mathbf{Q}(t)\mathbf{\Lambda}(t)\mathbf{Q}^\top(t)
		\end{equation*}
		for all $t\in [0,T)$. Here $\Lambda(t)$ denotes the diagonal matrix containing the eigenvalues of $\DD(t)$, while $\mathbf{Q}(t)$ denotes the transformation matrix containing the eigenvectors of $\DD(t)$. We define the matrix logarithm $\log(\DD(t))$ for a symmetric positive definite (SPD) matrix function $\DD(t)$ by
		\begin{equation*}
		\log\DD(t) = \mathbf{Q}(t)\log(\mathbf{\Lambda})(t)\mathbf{Q}^\top(t).
		\end{equation*}	
		Furthermore, for $\DD\in C^1([0,T))$ the following Jacobi formula holds
		\begin{equation}
		\frac{\dd \DD}{\dt}:\DD^{-1} = \tr{\DD^{-1}\frac{\dd \DD}{\dt}}=\frac{\dd}{\dt}\trr{\log\DD}. \label{eq:jacobi}
		\end{equation}
	\end{Definition}

	\begin{Lemma}[\cite{Mizerova.2015}]\label{lem_logid}
	   Let $\DD$ be a symmetric positive definite matrix. Then the following holds:
	    \begin{align}
	        \trr{\log\DD} = \log \det(\DD),\quad \trr{\DD}^2 - 2\trr{\log\DD} - \trr{\I} \geq 0,\quad \trr{\DD+\DD^{-1}-\I} \geq 0.\label{eq:logid}
	    \end{align}
	\end{Lemma}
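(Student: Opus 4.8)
All three statements reduce to one-variable inequalities once $\DD$ is diagonalised, so the plan is to write $\DD = \mathbf{Q}\mathbf{\Lambda}\mathbf{Q}^\top$ exactly as in the definition preceding the lemma, with $\mathbf{\Lambda} = \diag(\lambda_1,\dots,\lambda_d)$ and $\lambda_i>0$ for all $i$ since $\DD$ is symmetric positive definite, and then to use that trace and determinant are invariant under the similarity transformation $\mathbf{Q}(\cdot)\mathbf{Q}^\top$.

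For the first identity I would simply note that $\log\DD = \mathbf{Q}\log(\mathbf{\Lambda})\mathbf{Q}^\top$ by definition, hence $\trr{\log\DD} = \sum_{i=1}^d \log\lambda_i = \log\prod_{i=1}^d \lambda_i = \log\det\DD$; nothing further is needed.

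For the second inequality I would first discard the nonnegative off-diagonal part of $\trr{\DD}^2$: since all eigenvalues are positive, $\trr{\DD}^2 = \left(\sum_i \lambda_i\right)^2 \geq \sum_i \lambda_i^2 = \trr{\DD^2}$. Using also $\trr{\log\DD} = \sum_i \log\lambda_i$ and $\trr{\I}=d$, the claim reduces to $\sum_{i=1}^d\left(\lambda_i^2 - 2\log\lambda_i - 1\right)\geq 0$, which follows term by term from the scalar fact that $f(x):=x^2 - 2\log x - 1$ is nonnegative on $(0,\infty)$: indeed $f(1)=0$ and $f'(x) = 2(x^2-1)/x$, so $x=1$ is the global minimum of $f$. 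For the third inequality I would again pass to eigenvalues, $\trr{\DD + \DD^{-1} - \I} = \sum_{i=1}^d\left(\lambda_i + \lambda_i^{-1} - 1\right)$, and conclude by AM--GM, since $\lambda_i + \lambda_i^{-1}\geq 2$ makes every summand at least $1$.

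The argument is entirely elementary and I do not anticipate a real obstacle; the only step requiring a moment's thought is the reduction in the second inequality, namely the observation that bounding $\trr{\DD}^2$ below by $\trr{\DD^2}$ turns the expression into the separable sum $\sum_i f(\lambda_i)$, to which the one-variable estimate applies directly. (A slightly different route, avoiding this reduction, would be to invoke convexity of $x\mapsto -\log x$, but the term-by-term argument is the shortest.)
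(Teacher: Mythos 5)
Your proof is correct. The paper itself gives no proof of this lemma --- it is simply quoted from \cite{Mizerova.2015} --- so there is nothing to compare against; your argument (diagonalise, reduce to the scalar facts $\log\det = \sum\log\lambda_i$, $f(x)=x^2-2\log x-1\ge 0$ after discarding the nonnegative cross terms in $(\sum_i\lambda_i)^2$, and $\lambda+\lambda^{-1}\ge 2$) is the standard elementary route and is complete as written.
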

	
	\begin{Lemma}[\cite{Barrett.2018}]
		\label{lem:matinv}
		Let  $\DD\in H^2(\Omega)^{m\times m}\cap C^1(\overline{\Omega})^{m\times m},m\in\mathbb{N}$, be a symmetric matrix function, which is uniformly positive definite on $\overline{\Omega}$ and satisfies homogeneous Neumann boundary conditions, then
		\begin{equation}
		\label{eq:invconf}
		\int_\Omega \Delta\DD:\DD^{-1} \dx = - \int_\Omega \nabla\DD::\nabla\DD^{-1} \dx \geq \frac{1}{m}\int_\Omega \snorm*{\nabla\mathrm{tr}(\log\DD)}^2 \dx.
		\end{equation}
	\end{Lemma}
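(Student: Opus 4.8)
The plan is to separate the two claims in \eqref{eq:invconf}: the equality is an integration by parts and the inequality is a pointwise algebraic fact. First I would establish the equality. Since $\DD\in C^1(\overline{\Omega})^{m\times m}$ is uniformly positive definite on $\overline{\Omega}$, the matrix inversion map shows that $\DD^{-1}\in C^1(\overline{\Omega})^{m\times m}$, so each entry of $\DD^{-1}$ belongs to $H^1(\Omega)$, and differentiating $\DD\DD^{-1}=\I$ yields $\na\DD^{-1}=-\DD^{-1}(\na\DD)\DD^{-1}$. Because $\DD\in H^2(\Omega)^{m\times m}$, the normal derivative $\p_n\DD$ has a well-defined trace on $\p\Omega$, which vanishes by hypothesis. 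Applying Green's formula componentwise to $\int_\Omega \Delta\DD_{ij}\,(\DD^{-1})_{ij}\dx$ and summing over $i,j$, the boundary term $\int_{\p\Omega}\p_n\DD:\DD^{-1}\,\mathrm{d}S$ disappears and one obtains $\int_\Omega \Delta\DD:\DD^{-1}\dx=-\int_\Omega \na\DD::\na\DD^{-1}\dx$, which is the equality in \eqref{eq:invconf}.

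Next I would prove, for each $k\in\{1,\dots,m\}$ and a.e.\ in $\Omega$, the pointwise bound $-\p_{x_k}\DD:\p_{x_k}\DD^{-1}\geq \tfrac1m\snorm*{\p_{x_k}\trr{\log\DD}}^2$; summing over $k$ and integrating over $\Omega$ then combines with the equality above to give \eqref{eq:invconf}. Writing $\p$ for $\p_{x_k}$ and using $\p\DD^{-1}=-\DD^{-1}(\p\DD)\DD^{-1}$ together with the symmetry of $\DD$, $\p\DD$ and $\DD^{-1}$, one computes
\begin{equation*}
-\p\DD:\p\DD^{-1}=\tr{\p\DD\,\DD^{-1}\p\DD\,\DD^{-1}}=\tr{\mathbf{M}^2},\qquad \mathbf{M}:=\DD^{-1/2}(\p\DD)\DD^{-1/2}=\mathbf{M}^\top ,
\end{equation*}
while the Jacobi formula \eqref{eq:jacobi}, applied along the $x_k$-coordinate line (legitimate since $\DD$ stays SPD), gives $\p\,\trr{\log\DD}=\tr{\DD^{-1}\p\DD}=\tr{\mathbf{M}}$. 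Since $\mathbf{M}$ is a real symmetric $m\times m$ matrix, the Cauchy--Schwarz inequality applied to its $m$ (real) eigenvalues yields $\big(\tr{\mathbf{M}}\big)^2\leq m\,\tr{\mathbf{M}^2}$, which is exactly the desired pointwise inequality.

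I expect no real difficulty beyond bookkeeping. The only point that needs care is the justification of the integration by parts in the first step, where the $H^2$-regularity of $\DD$ is used for the boundary trace of $\p_n\DD$ and the uniform positive definiteness is used both to guarantee $\DD^{-1}\in H^1(\Omega)$ and to form the symmetric square root $\DD^{-1/2}$. Once the similarity transformation $\mathbf{M}=\DD^{-1/2}(\p\DD)\DD^{-1/2}$ is in place, the algebraic core of the statement is merely Cauchy--Schwarz.
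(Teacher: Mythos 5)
Your proposal is correct and follows essentially the same route as the paper (which cites the lemma from Barrett--S\"uli and reproduces the key computation in Section~4): integrate by parts using the Neumann condition, write $\nabla\DD^{-1}=-\DD^{-1}(\nabla\DD)\DD^{-1}$, conjugate by $\DD^{-1/2}$ to reduce the integrand to $\mathrm{tr}(\mathbf{M}^2)$ with $\mathbf{M}=\DD^{-1/2}(\p\DD)\DD^{-1/2}$ symmetric, and finish with the eigenvalue Cauchy--Schwarz bound $(\mathrm{tr}\,\mathbf{M})^2\leq m\,\mathrm{tr}(\mathbf{M}^2)$. No gaps.
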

	
	\begin{Lemma}[Gronwall]
		\label{lem:gronwall}
		Let $f\in L^1(t_0,T)$ be non-negative and $g,\phi$ continuous functions on $[t_0,T]$. If $\phi$ satisfies
		\begin{align*}
		\phi(t) \leq g(t) + \int_{t_0}^t f(s)\phi(s) \mathrm{d}s, \text{ for all } t\in[t_0,T]
		\end{align*}
		then
		\begin{equation*}
		\phi(t) \leq g(t) + \int_{t_0}^t f(s)g(s)\exp{\(\int_s^t f(\tau) \mathrm{d}\tau\)} \mathrm{d}s, \text{ for all } t\in[t_0,T].
		\end{equation*}
		If $g$ is moreover non-decreasing, then
		\begin{equation*}
		\phi(t) \leq g(t)\exp{\(\int_{t_0}^t f(\tau) \mathrm{d}\tau\)}, \text{ for all } t\in[t_0,T].
		\end{equation*}
	\end{Lemma}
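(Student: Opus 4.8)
The plan is to prove this by the classical integrating-factor method. First I would introduce the auxiliary function
\[
\psi(t) := \int_{t_0}^t f(s)\phi(s)\,\mathrm{d}s, \qquad t\in[t_0,T],
\]
which is well defined and absolutely continuous on $[t_0,T]$ since $f\in L^1(t_0,T)$ and $\phi$ is continuous, hence bounded, on $[t_0,T]$; moreover $\psi(t_0)=0$ and $\psi'(t)=f(t)\phi(t)$ for a.e. $t$. Plugging the hypothesis $\phi(t)\le g(t)+\psi(t)$ into this identity and using $f\ge 0$ yields the linear differential inequality $\psi'(t)\le f(t)g(t)+f(t)\psi(t)$ for a.e. $t\in(t_0,T)$.

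Next I would multiply by the strictly positive, absolutely continuous integrating factor $\mu(t):=\exp\!\(-\int_{t_0}^t f(\tau)\,\mathrm{d}\tau\)$, which satisfies $\mu'=-f\mu$ a.e., so that
\[
\frac{\mathrm{d}}{\mathrm{d}t}\big(\mu(t)\psi(t)\big)=\mu(t)\big(\psi'(t)-f(t)\psi(t)\big)\le \mu(t)f(t)g(t)\quad\text{a.e. on }(t_0,T).
\]
Integrating from $t_0$ to $t$, using $\mu(t_0)=1$ and $\psi(t_0)=0$, and then dividing by $\mu(t)$, I get
\[
\psi(t)\le \int_{t_0}^t \frac{\mu(s)}{\mu(t)}\,f(s)g(s)\,\mathrm{d}s=\int_{t_0}^t f(s)g(s)\exp\!\(\int_s^t f(\tau)\,\mathrm{d}\tau\)\,\mathrm{d}s,
\]
where I used $\mu(s)/\mu(t)=\exp\(\int_s^t f(\tau)\,\mathrm{d}\tau\)$. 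Adding $g(t)$ to both sides and invoking $\phi\le g+\psi$ once more gives the first claimed estimate.

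For the refined bound under the monotonicity assumption, I would estimate $g(s)\le g(t)$ for $s\le t$ inside the integral and then telescope, using that $\frac{\mathrm{d}}{\mathrm{d}s}\exp\(\int_s^t f(\tau)\,\mathrm{d}\tau\)=-f(s)\exp\(\int_s^t f(\tau)\,\mathrm{d}\tau\)$ for a.e. $s$, whence
\[
\int_{t_0}^t f(s)\exp\!\(\int_s^t f(\tau)\,\mathrm{d}\tau\)\,\mathrm{d}s=\exp\!\(\int_{t_0}^t f(\tau)\,\mathrm{d}\tau\)-1 ,
\]
so that $\phi(t)\le g(t)\Big(1+\exp\(\int_{t_0}^t f(\tau)\,\mathrm{d}\tau\)-1\Big)=g(t)\exp\(\int_{t_0}^t f(\tau)\,\mathrm{d}\tau\)$.

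I do not anticipate a genuine obstacle; the only point needing mild care is that $f$ is merely integrable, so $\psi$ and $\mu$ are absolutely continuous rather than $C^1$ and every differentiation and integration above must be read in the a.e./Lebesgue sense, relying on the fundamental theorem of calculus for absolutely continuous functions. As an alternative route one could avoid the integrating factor altogether and iterate the hypothesis into itself $n$ times, obtaining $\phi(t)\le g(t)+\sum_{k=1}^{n}\int_{t_0}^t f(s)g(s)\frac{1}{k!}\(\int_s^t f(\tau)\,\mathrm{d}\tau\)^k\,\mathrm{d}s+R_n(t)$ with a remainder $R_n\to 0$, and summing the resulting Neumann series $\sum_k \frac{1}{k!}(\int_s^t f)^k=\exp(\int_s^t f)$ to reach the same conclusions.
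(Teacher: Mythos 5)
Your proof is correct: the integrating-factor argument is carried out carefully, including the only delicate point (that $\psi$ and $\mu$ are merely absolutely continuous, so the product rule and the fundamental theorem of calculus are applied in the a.e./Lebesgue sense), and the reduction of the monotone case via $\int_{t_0}^t f(s)\exp\(\int_s^t f\)\mathrm{d}s=\exp\(\int_{t_0}^t f\)-1$ is exact. The paper states this lemma without proof, as a standard tool recalled in the preliminaries, and your argument is precisely the classical proof one would supply.
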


	\medskip

	We proceed by recalling some regularity results for parabolic Neumann problems. We start by introducing fractional-order Sobolev spaces. Let $\Omega$ be the whole space $\R^d$ or a bounded Lipschitz domain in $\R^d$. For any $k\in \mathbb{N}$, $\beta \in (0,1)$ and $s\in [1,\infty) $, we define
	\[
	W^{k+\beta,s}(\Omega):=\big\{v\in W^{k,s}(\Omega) : \| v \|_{W^{k+\beta,s}(\Omega)}<\infty \big\},
	\]
	where
	$$
	\| v \|_{W^{k+\beta,s}(\Omega)}:=\| v \|_{W^{k,s}(\Omega)} + \sum_{|\alpha|=k} \left(\int_\Omega\int_\Omega
	\frac{|\p^\alpha v(x)- \p^\alpha v(y)|^s}{|x-y|^{d+\beta s}} \dx \,{\rm d}y\right)^{\frac{1}{s}}.
	$$
	Consider the parabolic initial-boundary value problem:
	\baa\label{para-ib-Neumann}
	&\p_t \rho - \e\, \Delta \rho = h \ \mbox{in} \ \Omega_T;
	\quad \rho(0,\cdot) = \rho_0 \ \mbox{in} \   \Omega; \quad \p_{\bf n}\rho =0 \ \mbox{in} \ (0,T)\times \p \Omega.
	\eaa
	Here $\e>0$, $\rho_0$ and $h$ are known functions, and $\rho$ is the unknown solution. In what follows the regularity result will be useful, see for example Section 7.6.1 in \cite{Novotny.2004}.
	
	\begin{Lemma}\label{lem-parabolic-1}
		Let $0<\beta < 1, \ 1<p,q<\infty$,  $\Omega\subset \R^d$ be a bounded $C^{2,\beta}$ domain with $\beta\in (0,1)$,
		$$
		\rho_0 \in W^{2-\frac{2}{p},q}_{\bf n},\quad h\in L^p(0,T;L^q(\Omega)),
		$$
		where $W^{2-\frac{2}{p},q}_{\bf n}$ is the completion of the linear space $\{v \in  C^\infty(\overline \Omega): \ \p_{\bf n} v|_{\p \Omega} = 0 \}$ with respect to the norm of $W^{2-\frac{2}{p},q}(\Omega)$.
		Then there exists a unique function $\rho$ satisfying
		$$
		\rho \in L^p(0,T;W^{2,q}(\Omega)) \cap C([0,T];W^{2-\frac{2}{p},q}(\Omega)), \quad \p_t \rho \in L^p(0,T;L^q(\Omega))
		$$
		solving \eqref{para-ib-Neumann} in $\Omega_T$. In addition, $\rho$ satisfies the Neumann boundary condition in $\eqref{para-ib-Neumann}$ in the sense of the normal trace, which is well defined since $\Delta \rho \in L^p(0,T;L^{q}(\Omega))$.
		Moreover,
		\begin{align*}
		\e^{1-\frac{1}{p}}\|\rho\|_{L^\infty(W^{2-\frac{2}{p},q})}
		+ \left\|\partial_t\rho \right\|_{L^p(L^q)}
		+ \e \|\rho\|_{L^p(W^{2,q})}  \nonumber\\
		\leq C(p,q,\Omega)\big[\e^{1-\frac{1}{p}}
		\|\rho_0\|_{W^{2-\frac{2}{p},q}(\Omega)} + \|h\|_{L^p(L^q)}\big].
		\end{align*}
		\end{Lemma}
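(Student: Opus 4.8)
This is the classical statement of maximal $L^p$--$L^q$ regularity for the heat equation with homogeneous Neumann data on a smooth bounded domain, together with the explicit dependence of the constant on the diffusion coefficient $\e$; apart from the standard theory, the only genuine content is the parabolic rescaling that generates the powers of $\e$. The plan is as follows.

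First I would dispose of the case $\e=1$. For $\e=1$ problem \eqref{para-ib-Neumann} is the inhomogeneous Neumann heat equation, and all the assertions — existence and uniqueness of $\rho$ in the stated regularity class, attainment of the initial datum in $W^{2-\frac1p,q}$, sorry, in $W^{2-\frac2p,q}(\Omega)$ (the trace/real-interpolation theory of the maximal regularity class), the well-posedness of the normal trace, and the estimate
\[
\|\rho\|_{L^\infty(W^{2-\frac2p,q})} + \|\p_t\rho\|_{L^p(L^q)} + \|\rho\|_{L^p(W^{2,q})} \le C_0\big[\|\rho_0\|_{W^{2-\frac2p,q}} + \|h\|_{L^p(L^q)}\big], \qquad C_0=C_0(p,q,\Omega,T)
\]
— are exactly the content of \cite[\S 7.6.1]{Novotny.2004}; alternatively they follow from the analyticity of the Neumann Laplacian on $L^q(\Omega)$ together with the Dore--Venni / Denk--Hieber--Pr\"uss maximal regularity theorem. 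I would also record the elementary monotonicity that $C_0$ may be taken nondecreasing in the length of the time interval (given data on $(0,S)$ with $S\le T$, extend $h$ by zero to $(0,T)$, solve there, and restrict); this is what will make the final constant independent of $\e$.

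Second, the rescaling. Fix $\e\in(0,1]$, put $S:=\e T$, and set $\tilde\rho(s,x):=\rho(s/\e,x)$, so that $\rho(t,x)=\tilde\rho(\e t,x)$, $\p_t\rho(t,x)=\e\,(\p_s\tilde\rho)(\e t,x)$ and $\Delta\rho(t,x)=(\Delta\tilde\rho)(\e t,x)$. Then $\tilde\rho$ solves $\p_s\tilde\rho-\Delta\tilde\rho=\tilde h$ on $\Omega\times(0,S)$ with $\tilde h(s,x):=\e^{-1}h(s/\e,x)$, the same initial datum $\rho_0$, and the same homogeneous Neumann condition, so the $\e=1$ theory applies on $(0,S)$. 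A change of variables $s=\e t$ in the time integrals gives
\[
\|\p_s\tilde\rho\|_{L^p(0,S;L^q)}=\e^{-(1-\frac1p)}\|\p_t\rho\|_{L^p(L^q)},\qquad
\|\tilde\rho\|_{L^p(0,S;W^{2,q})}=\e^{\frac1p}\|\rho\|_{L^p(W^{2,q})},
\]
\[
\|\tilde\rho\|_{L^\infty(0,S;W^{2-\frac2p,q})}=\|\rho\|_{L^\infty(W^{2-\frac2p,q})},\qquad
\|\tilde h\|_{L^p(0,S;L^q)}=\e^{-(1-\frac1p)}\|h\|_{L^p(L^q)}.
\]
Substituting these into the $\e=1$ estimate on $(0,S)$ and multiplying through by $\e^{1-\frac1p}$ produces precisely
\[
\e^{1-\frac1p}\|\rho\|_{L^\infty(W^{2-\frac2p,q})}+\|\p_t\rho\|_{L^p(L^q)}+\e\|\rho\|_{L^p(W^{2,q})}
\le C\big[\e^{1-\frac1p}\|\rho_0\|_{W^{2-\frac2p,q}}+\|h\|_{L^p(L^q)}\big],
\]
with $C=C_0(p,q,\Omega,S)\le C_0(p,q,\Omega,T)$ by the monotonicity above, hence independent of $\e\in(0,1]$; existence, uniqueness and the regularity class for general $\e$ transfer along the same substitution.

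Finally, I would spell out the boundary condition: since $\rho\in L^p(0,T;W^{2,q}(\Omega))$, for a.e.\ $t$ one has $\na\rho(t,\cdot)\in W^{1,q}(\Omega)$ and $\Delta\rho(t,\cdot)\in L^q(\Omega)$, so $\p_{\bf n}\rho(t,\cdot)$ is well defined through the identity $\int_\Omega\Delta\rho\,\varphi=-\int_\Omega\na\rho\cdot\na\varphi+\langle\p_{\bf n}\rho,\varphi\rangle_{\p\Omega}$, and its vanishing is obtained by approximating $(\rho_0,h)$ by smooth data — for which the classical theory gives genuine solutions with $\p_{\bf n}\rho=0$ on $\p\Omega$ — and passing to the limit with the estimate just proved. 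The only non-elementary ingredient is the $\e=1$ maximal regularity statement, which we take from the literature; the rest is the bookkeeping of the parabolic scaling, and the point that needs a little care is checking that the interval-length dependence of the constant does not spoil its $\e$-independence.
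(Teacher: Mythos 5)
The paper does not prove this lemma at all: it is quoted from the literature with the pointer ``see for example Section 7.6.1 in [Novotn\'y--Stra\v{s}kraba]'', so there is no internal argument to compare yours against. Your proposal is the standard derivation of exactly this kind of $\e$-weighted estimate (and is essentially how it is obtained in that reference): take the $\e=1$ maximal $L^p$--$L^q$ regularity for the Neumann heat equation as the black box, then generate the powers of $\e$ by the time rescaling $s=\e t$. Your bookkeeping is correct --- $\|\p_s\tilde\rho\|_{L^p(0,S;L^q)}=\e^{-(1-1/p)}\|\p_t\rho\|_{L^p(L^q)}$, $\|\tilde h\|_{L^p(0,S;L^q)}=\e^{-(1-1/p)}\|h\|_{L^p(L^q)}$, $\|\tilde\rho\|_{L^p(0,S;W^{2,q})}=\e^{1/p}\|\rho\|_{L^p(W^{2,q})}$ --- and multiplying the $\e=1$ estimate by $\e^{1-1/p}$ reproduces the stated inequality exactly. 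Two caveats worth flagging. First, your argument as written covers only $\e\in(0,1]$: for $\e>1$ the rescaled interval $(0,\e T)$ is longer than $(0,T)$, and since the Neumann Laplacian is not invertible the constant $C_0(p,q,\Omega,S)$ genuinely grows with $S$ (test with $h$ constant, $\rho_0=0$, so $\rho=ct$), so uniformity in large $\e$ does not follow from monotonicity alone; this is harmless here because $\e$ is a fixed small diffusion coefficient in the paper. Second, for the same reason the constant in the lemma should really carry a $T$-dependence, which the paper's statement $C(p,q,\Omega)$ suppresses; your $C_0(p,q,\Omega,T)$ is the honest version. Neither point is a gap in your reasoning for the regime in which the lemma is actually used.
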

		
		Next we introduce the space of weakly continuous functions over a Banach space $X$ as follows
		$$ C_w([0,T];X):=\{ f:[0,T]\to X: \int_\Omega fv\in C([0,T]) \text{ for any } v\in X\}.
		$$ Here the integral is understood as a dual paring between $f$ and $v$, i.e. $f\in X^*$.
		
		\begin{Lemma}[\cite{Barrett.2016}]
		Let $X,Y$ be Banach spaces and assume $X$ is reflexive and is continuously embedded in $Y$, then
			\begin{equation*}
			L^\infty(0,T;X)\cap C_{w}([0,T];Y)=C_{w}([0,T];X).
			\end{equation*}
			
		\end{Lemma}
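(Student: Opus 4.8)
The plan is to prove the two set inclusions separately, the inclusion $C_w([0,T];X)\subseteq L^\infty(0,T;X)\cap C_w([0,T];Y)$ being the easier one.

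First I would take $f\in C_w([0,T];X)$. For each $\phi\in X^*$ the scalar map $t\mapsto\langle f(t),\phi\rangle$ is continuous on the compact interval $[0,T]$, hence bounded, so $\{f(t):t\in[0,T]\}$ is weakly bounded in $X$ and the Banach--Steinhaus theorem gives $\sup_{t\in[0,T]}\|f(t)\|_X<\infty$. To place $f$ in $L^\infty(0,T;X)$ I still need strong measurability: since the rationals are dense in $[0,T]$ and $f$ is weakly continuous, $f([0,T])$ lies in the (weakly closed) closed linear span of the countable set $f(\mathbb{Q}\cap[0,T])$, so $f$ is essentially separably valued, and being weakly measurable it is strongly measurable by Pettis' theorem. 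Finally, because $X\hookrightarrow Y$ continuously, each $\psi\in Y^*$ restricts to $\psi|_X\in X^*$, whence $t\mapsto\langle f(t),\psi\rangle_Y=\langle f(t),\psi|_X\rangle_X$ is continuous; thus $f\in C_w([0,T];Y)$.

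For the reverse inclusion I would take $f\in L^\infty(0,T;X)\cap C_w([0,T];Y)$, set $M:=\|f\|_{L^\infty(0,T;X)}$, and first argue that $f(t)\in X$ for \emph{every} $t\in[0,T]$ with $\|f(t)\|_X\le M$. Fixing $t_0$, I choose $t_n\to t_0$ with $f(t_n)\in X$ and $\|f(t_n)\|_X\le M$ (permissible since this holds for a.e.\ $t$); by reflexivity of $X$ a subsequence converges weakly in $X$ to some $g\in X$, hence also weakly in $Y$ by continuity of the embedding, while $f\in C_w([0,T];Y)$ forces this $Y$-weak limit to be $f(t_0)$. Since $Y^*$ separates points, $f(t_0)=g\in X$, and weak lower semicontinuity of $\|\cdot\|_X$ gives the bound. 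With pointwise $X$-valuedness and the uniform bound in hand, weak continuity in $X$ follows by a standard subsequence argument: if $t\mapsto\langle f(t),\phi\rangle_X$ failed to be continuous at some $t_0$ for some $\phi\in X^*$, a separated sequence $f(t_n)$, bounded in $X$, would have a weak-$X$ cluster point, which by the previous reasoning must equal $f(t_0)$, a contradiction.

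The main obstacle is the middle step: the hypothesis $f\in L^\infty(0,T;X)$ only constrains $f(t)$ for almost every $t$, and it must be upgraded to genuine $X$-valuedness at every time with a uniform norm bound. This is exactly where reflexivity of $X$ (to extract weak limits from bounded sequences) and the separation of points of $Y$ by $Y^*$ (to identify those limits with the known $Y$-weakly continuous representative) enter; without reflexivity the statement can fail. The rest --- Banach--Steinhaus, the Pettis measurability check, and the subsequence argument for continuity --- is routine.
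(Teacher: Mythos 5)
The paper does not prove this lemma --- it is quoted with a citation to the literature --- so there is no internal proof to compare against; your argument is the standard Lions--Magenes/Strauss-type proof of exactly this statement and it is correct. Both directions are sound: the Banach--Steinhaus plus Pettis step for the easy inclusion, and for the converse the key upgrade from ``a.e.\ in $X$'' to ``everywhere in $X$'' via weak sequential compactness of bounded sets in the reflexive space $X$ together with identification of the weak limit through $Y^*$, followed by the subsequence argument for weak continuity in $X$.
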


	\section{Main results}
	The aim of this section is to present the main results of the paper: the global existence of weak solutions in three space dimensions and the weak-strong uniqueness result. We start by defining the weak solutions to the Peterlin viscoelastic system \eqref{eq:uc_model}.
	
	\begin{tcolorbox}[breakable]
	
			\begin{Definition}
				Given the initial data $\left(\u_0,\CC_0 \right) \in [H \times L^2(\Omega)^{3\times3}_{SPD}]$.
				\label{defn:weak_sol_UC}
				The couple $(\u,\CC)$ is called a weak solution of (\ref{eq:uc_model}) in $\Omega_T$ if
				\begin{align*}
				&\u\in C_{w}([0,T];L^2(\Omega))\cap L^2(0,T;V) \cap C([0,T];L^{q}(\Omega))\cap W^{1,\frac{4}{3}}(0,T;V^*),\\
				&\CC\in  C_{w}([0,T];L^2(\Omega)) \cap L^2(0,T;H^1(\Omega))\cap L^4(\Omega_T) \cap C([0,T];L^{q}(\Omega))\cap W^{1,\frac{4}{3}}(0,T;H^{-1}(\Omega)),\\
				&\bigchi(\trr{\CC})\CC \in L^{\frac{4}{3}}(\Omega_T),\qquad \Phi(\trr{\CC}) \in L^2(\Omega_T),
				\end{align*}
				for any $1\leq q <2$ and  \eqref{eq:uc_model} is satisfied in the sense of distributions:
				\begin{itemize}
					\item For any $t \in (0,T]$ and  any test function $\vv \in C^{\infty}([0,T]; C_{c}^{\infty}(\Omega;\R^{3})),$
					\baa\label{eq:weak_sol_UC-1}
					& \int_{0}^{t} \int_{\Omega} \u \cdot  \frac{\p \vv}{\p t} \dx \ddta - \int_{0}^{t} \int_{\Omega} (\u\cdot\nabla)\u\cdot\vv \dx \ddta - \int_{0}^{t} \int_\Omega \eta\Du:\mathrm{D}\vv \dx \ddta  \\
					& =  \int_{0}^{t} \int_\Omega\trC\CC:\nabla\vv \dx \ddta   +  \int_{\Omega} \u (t) \cdot  \vv (t)\dx -   \int_{\Omega} \u_{0}  \cdot  \vv (0)\dx.
					\eaa
				\end{itemize}
				\begin{itemize}
					\item For any $t \in (0,T]$ and any test function $\DD \in C^{\infty}([0,T]\times \overline\Omega;\R^{3\times 3})),$
					\baa\label{eq:weak_sol_UC-2}
					& \int_{0}^{t} \int_{\Omega} \CC :  \frac{\p \DD}{\p t} \dx \ddta - \int_{0}^{t} \int_{\Omega} (\u\cdot\nabla)\CC : \DD \dx \ddta - \int_{0}^{t} \int_\Omega \e \nabla \CC :: \nabla \DD \dx \ddta  \\
					& =  \int_{0}^{t}\int_\Omega \bigchi(\trC)\CC:\DD \dx\ddta - \int_{0}^{t} \int_\Omega \Phi(\trC)\, {\rm tr}\,\DD \dx \ddta  \\
					& \quad +  \int_{\Omega} \CC (t) :  \DD (t)\dx -   \int_{\Omega} \CC_{0}  : \DD (0)\dx.
					\eaa
				\end{itemize}	
			\end{Definition}
	\end{tcolorbox}
We are now in the place to state our main result on existence of {\sl weak dissipative solutions}, i.e. weak solutions for which a suitable energy functional is decreasing in time.
	
	\begin{tcolorbox}
		\begin{Theorem}[Existence of weak dissipative solutions]\label{thm:existence}
			For given initial data $(\u_0,\CC_0)\in [H\times L^2(\Omega)_{SPD}^{3\times 3}]$ and any $T>0$ there exists a global in time weak solution of the Peterlin viscoelastic system (\ref{eq:uc_model}) in the sense of Definition \ref{defn:weak_sol_UC}. Moreover, it satisfies for a.e. $t\in(0,T)$ the energy inequality
			\begin{align}
			&\(\int_\Omega \frac{1}{2}\snorm*{\u(t)}^2 + \frac{1}{4}\snorm*{\trCt}^2 \dx \) +\int_{\Omega_t}\eta\snorm*{\Du}^2 + \frac{\varepsilon}{2}\snorm*{\na\trC}^2 + \frac{1}{2} \snorm*{\trC}^4 + \frac{a}{2}\snorm*{\trC}^3 \dx\ddta \nonumber \\
			&\leq  \frac{1}{2}\int_{\Omega_t} \snorm*{\trC}^2 + \frac{a}{2}\trC \dx\ddta + \(\int_\Omega \frac{1}{2}\snorm*{\u(0)}^2 + \frac{1}{4}\snorm*{\trCo}^2 \dx \). \label{eq:uc_energy}
			\end{align}
			If $a=0$ the conformation tensor $\CC$ is symmetric positive semi-definite a.e. in $\Omega\times[0,T)$.\\
			If $a>0$ and the initial datum $\CC_0$ satisfies additionally $\trr{\log\CC_0}\in L^1(\Omega)$ then $\CC$ is symmetric positive definite a.e. in $\Omega\times[0,T)$ and enjoys the additional regularity
			\begin{align*}
			\trr{\log\CC} &\in L^\infty(0,T;L^1(\Omega))\cap L^2(0,T;H^1(\Omega)), \\
			\trr{\CC^{-1}}, \trr{\CC^{-1}}\trC &\in L^1(0,T;L^1(\Omega)).
			\end{align*}
			Furthermore, if $a>0$, for a.a. $t\in(0,T)$ the free energy inequality holds, i.e.
			\begin{align}
			&\(\int_\Omega \frac{1}{2}\snorm*{\u(t)}^2 + \frac{1}{4}\snorm*{\trCt}^2 - \frac{1}{2}\trlnCt \dx \) \label{eq:uc_freeenergy} \\
			&+ \iQt \eta\snorm*{\Du}^2+ \frac{\varepsilon}{2}\snorm*{\na\trC}^2 + \frac{\varepsilon}{6}\snorm*{\na\tr{\log\CC}}^2 + \frac{1}{2}\bigchi(\trC)\mathrm{tr}(\TT + \TT^{-1} - 2\I )\dx\ddta  \nonumber\\
			\leq&  \(\int_\Omega \frac{1}{2}\snorm*{\u(0)}^2 + \frac{1}{4}\snorm*{\trCo}^2 - \frac{1}{2}\trlnCo \dx \).  \nonumber
			\end{align}
		\end{Theorem}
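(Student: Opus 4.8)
The plan is to construct the weak solution via a two-layer approximation: an "outer" Galerkin scheme for the velocity $\u$ coupled with a "semigroup/parabolic" treatment of the conformation tensor $\CC$, exactly as advertised in the abstract. Concretely, I would first fix $\u$ in a suitable class and solve the linear parabolic Neumann problem for $\CC$ using Lemma~\ref{lem-parabolic-1}; this produces $\CC=\CC[\u]$ with the maximal regularity $\CC\in L^p(0,T;W^{2,q})\cap C([0,T];W^{2-2/p,q})$ and $\p_t\CC\in L^p(L^q)$, and in particular enough regularity to make sense of $\CC^{-1}$, $\Delta\CC:\CC^{-1}$ etc. Then I would plug $\TT=\trC\,\CC$ into a standard Galerkin approximation of the Navier--Stokes part (projecting onto the first $n$ eigenfunctions of the Stokes operator), obtain a fixed-point map $\u_n\mapsto\CC[\u_n]\mapsto\u_n$, and solve it by Schauder's fixed point theorem on a short time interval, extended to $[0,T]$ by the a priori bounds below. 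The positivity of $\CC$ is propagated at the approximate level: for $a=0$ one tests the $\CC$-equation with the negative part of the eigenvalues (or uses a minimum principle for the parabolic system with the reaction term $\Phi(\trC)\I-\bigchi(\trC)\CC$, which is quasi-positive), and for $a>0$, given $\trr{\log\CC_0}\in L^1$, the regularisation keeps $\CC$ uniformly positive definite on compact subintervals so that $\log\CC$ and $\CC^{-1}$ are legitimate.

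The heart of the argument is the a priori estimates that must survive all limits and yield precisely \eqref{eq:uc_energy} and \eqref{eq:uc_freeenergy}. For \eqref{eq:uc_energy}: test the $\u$-equation with $\u$ (the convective term drops by incompressibility, the coupling yields $-\int\trC\,\CC:\na\u$), and test the $\CC$-equation with $\frac{1}{2}\trC\,\I$ — this is the key algebraic trick, since $\CC:\na\u$ combined with $(\na\u)\CC+\CC(\na\u)^\top$ produces $\trC\,\CC:\na\u$ which cancels the coupling term in the $\u$-equation. The diffusion gives $\frac{\e}{2}|\na\trC|^2$, the $\Phi$-term gives $\int(\trC+a)\trC=\int|\trC|^2+a\,\trC$, and the $\bigchi$-term gives $-\int(\trC^2+a|\trC|)\trC=-\int|\trC|^3\cdot(\text{sign stuff})-\ldots$; collecting the signs (using $\trC\ge0$ once positivity is known, or absolute values in general) produces the dissipation $\frac{1}{2}|\trC|^4+\frac{a}{2}|\trC|^3$ on the left and the lower-order $\frac{1}{2}|\trC|^2+\frac{a}{2}\trC$ on the right, which is controlled by Young's inequality and Gronwall (Lemma~\ref{lem:gronwall}). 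For \eqref{eq:uc_freeenergy}, in addition test the $\CC$-equation with $-\frac{1}{2}\CC^{-1}$: by the Jacobi formula \eqref{eq:jacobi} the time derivative becomes $-\frac{1}{2}\frac{d}{dt}\trr{\log\CC}$; the transport term integrates to zero; Lemma~\ref{lem:matinv} converts $-\frac{\e}{2}\int\Delta\CC:\CC^{-1}$ into a good term bounded below by $\frac{\e}{6}|\na\trr{\log\CC}|^2$; the upper-convected terms contribute $-\frac{1}{2}\tr((\na\u)\CC+\CC(\na\u)^\top):\CC^{-1}=-\trr{\na\u}=0$; and the reaction terms combine with the $\frac{1}{4}|\trC|^2$-test computation to give $\frac{1}{2}\bigchi(\trC)\,\trr{\TT+\TT^{-1}-2\I}$, which is $\ge 0$ by Lemma~\ref{lem_logid}. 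Adding this to the basic energy identity and using Lemma~\ref{lem_logid} again to absorb $-\frac{1}{2}\trr{\log\CC}$ on the left by $\frac{1}{4}|\trC|^2$ yields \eqref{eq:uc_freeenergy} and the stated $L^\infty(L^1)\cap L^2(H^1)$ and $L^1(L^1)$ bounds on $\trr{\log\CC}$, $\trr{\CC^{-1}}$, $\trr{\CC^{-1}}\trC$.

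Once these uniform bounds are in hand, the compactness step is routine but must be done carefully in 3D: from $\u\in L^\infty(H)\cap L^2(V)$ and $\p_t\u\in L^{4/3}(V^*)$ (the $4/3$ coming from the cubic term $\trC\,\CC\sim|\trC|^2$ being in $L^{4/3}$ via the $L^4$-bound on $\trC$ and Sobolev $H^1\hookrightarrow L^6$ in 3D), Aubin--Lions gives strong $L^2(L^2)$ convergence of $\u$; similarly $\CC\in L^2(H^1)\cap L^4(\Omega_T)$ with $\p_t\CC\in L^{4/3}(H^{-1})$ gives strong convergence of $\CC$, enough to pass to the limit in the quadratic and cubic nonlinearities $(\u\cdot\na)\u$, $(\u\cdot\na)\CC$, $(\na\u)\CC+\CC(\na\u)^\top$, $\trC\,\CC$, $\Phi(\trC)\I$, $\bigchi(\trC)\CC$. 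The energy inequalities pass to the limit by weak lower semicontinuity of the norms on the left and strong convergence on the right; the weak-continuity-in-time statements follow from Lemma (the one on $L^\infty\cap C_w$). The main obstacle — and the reason 3D was open — is closing the estimates for $\trC$ at the $L^4$-level uniformly: one needs the $\frac{1}{2}|\trC|^4$ dissipation from the $\bigchi$-term to dominate, and one needs the positive (semi)definiteness of $\CC$ (Proposition~\ref{prop:matineq}) to pass between $\|\CC\|_p$ and $\|\trC\|_p$ so that the $L^4(\Omega_T)$ bound on $\trC$ upgrades to an $L^4(\Omega_T)$ bound on $\CC$, which is what makes $\trC\,\CC\in L^{4/3}$ and hence the whole scheme consistent. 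Preserving this positivity through the Galerkin truncation and the $\e$-regularisation, and reconciling it with the semigroup solve for $\CC$, is the delicate technical point; everything else is a (lengthy) exercise in the standard machinery.
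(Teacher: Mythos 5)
Your construction follows essentially the same route as the paper: a Galerkin projection for the velocity only, a parabolic (maximal-regularity) solve for $\CC$ at fixed finite-dimensional $\u_m$, the tests $\u$, $\tfrac{1}{2}\trC\,\I$, $\tfrac{1}{2}\CC$ and $-\tfrac{1}{2}\CC^{-1}$ for the energy and free-energy estimates, the Jacobi formula and Lemma \ref{lem:matinv} for the logarithm term, the norm equivalence of Proposition \ref{prop:matineq} to upgrade $\trC\in L^4(\Omega_T)$ to $\CC\in L^4(\Omega_T)$, and Aubin--Lions compactness from the $L^{4/3}$ bounds on the time derivatives. All of this matches the paper's argument.

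The one genuine gap is the step you yourself flag as ``the delicate technical point'' and then leave open: the passage of \emph{strict} positive definiteness to the limit when $a>0$, and with it the identification of the weak limits of $\trr{\log\CC_m}$, $\trr{\CC_m^{-1}}$ and $\mathrm{tr}(\TT^{-1})$ in the free energy inequality. Positive definiteness of each $\CC_m$ does not survive the limit by soft arguments: strong $L^2$ convergence only yields positive \emph{semi}-definiteness, and the pointwise lower bound on the eigenvalues of $\CC_m$ is not uniform in $m$, so ``the regularisation keeps $\CC$ uniformly positive definite'' is not available as stated. The paper closes this by first extracting from the approximate free energy estimate the uniform bound $\int_{\Omega_T}|\CC_m^{-1}|\dx\dt\le c(a)$ --- which is exactly where the hypothesis $a>0$ enters --- and then arguing by contradiction (Lemma \ref{lema:uc_spd}): if the limit $\CC$ had a zero eigenvalue on a set $D$ of positive measure, writing $|\vv|=|\CC_m^{-1/2}\CC_m^{1/2}\vv|$ for a unit eigenvector field $\vv$ supported in $D$ and applying Cauchy--Schwarz against the $L^1$ bound on $\CC_m^{-1}$ forces $|D|=0$. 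Only after that can one use a.e.\ convergence of $\CC_m$, continuity of $\DD\mapsto\trr{\log\DD}$ and $\DD\mapsto\trr{\DD^{-1}}$ on positive definite matrices, and Fatou's lemma to pass to the limit in \eqref{eq:uc_freeenergy}. Without an argument of this type the free energy inequality for the limit solution, and the stated $L^1$ regularity of $\trr{\CC^{-1}}$ and $\trr{\log\CC}$, are not justified.
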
	
	\end{tcolorbox}
	
	\begin{Remark}
The energy inequality \eqref{eq:uc_energy}  holds for $a \geq 0.$
The free energy inequality \eqref{eq:uc_freeenergy} is only valid in the case $a>0$, since this requires
positive definiteness of the conformation tensor $\CC$. The free energy in \eqref{eq:uc_freeenergy}
is positively bounded from below due to \eqref{eq:logid}.
	\end{Remark}

	\begin{Remark}
		Note that in \eqref{eq:uc_freeenergy} one can also write the energy inequality with $\frac{\varepsilon}{2}\snorm*{\CC^{-1/2}\na\CC\CC^{-1/2}}^2$ instead of $\frac{\varepsilon}{6}\snorm*{\na\tr{\log\CC}}^2$, see \cite{Malek.2018}. In this case  an energy equality holds for a smooth solution.
	\end{Remark}

	\begin{Remark}
		The positive definiteness condition $a>0$ can also be found for more general models, see \cite{Hulsen.1990}. The proof of the above result will be given in the next section.
		
		First by introducing a Galerkin approximation only for the velocity $\u$ we obtain by parabolic regularity a solution $(\u_m,\CC_m)$, where $\CC_m$ is the classical solution of $(\ref{eq:uc_model})_2$ corresponding to  finite-dimensional velocity $\u_m$. By the energy method and parabolic regularity we {can} obtain approximation independent bounds and we can further pass to the limit in the equations for $\u_m$ and $\CC_m$ and in the energy inequality. Furthermore, if $a>0$ we prove the positive definiteness of $\CC_m$ and obtain the desired free energy inequality (\ref{eq:uc_freeenergy}).
	\end{Remark}

	Our next result is devoted to the regularity of the conformation tensor, a piece of information that was missing in \cite{LukacovaMedvidova.2015,LukacovaMedvidova.2017}.
	
	\begin{tcolorbox}
		
		\begin{Theorem}[Conditional energy equality]\label{theo:cond-energy-eq}
			Moreover, if the initial datum $\CC_{0}\in W^{\frac{1}{2}, \frac{4}{3}}_{{\bf n}}(\Omega)$, then $\CC$ satisfies the following higher-order estimates
			\baa\label{2nd-time-est-C}
			\CC\in L^{\frac{4}{3}}(0,T;W^{2,\frac{4}{3}}(\Omega)) + L^{\tilde s}(0,T;W^{2,\tilde r}(\Omega)), \quad \frac{\p \CC}{\p t}\in  L^{\frac{4}{3}}(\Omega_T) + L^{\tilde s}(0,T;L^{\tilde r}(\Omega)),
			\eaa
			where $(\tilde s, \tilde r)$ satisfies
			\baa\label{s-r-tilde}
			\frac{2}{\tilde s} + \frac{3}{\tilde r} = 4, \quad 1<\tilde s <2, \ 1<\tilde r < \frac{3}{2}.
			\eaa

			If the weak solution $(\u, \CC)$ obtained in Theorem \ref{thm:existence} satisfies $\CC \in L^{s}(0,T; L^{r}(\Omega))$ with
			\baa\label{Serrin-s-r}
			\frac{2}{s} + \frac{3}{r} \leq 1, \quad 2<s<\infty,  \ 3 < r <\infty,
			\eaa
			then for almost all $\tau \in (0,T]$ there holds
			\baa\label{cond-energy-eq}
			&\int_{\Omega} \frac{1}{2}|\CC (\tau)|^{2}\dx + \int_{0}^{\tau}\int_{\Omega}  \varepsilon |\nabla \CC|^{2}\dx \dt + \int_{0}^{\tau}\int_{\Omega}  |\trC|^{2} |\CC|^{2} + a |\trC| |\CC|^{2}\dx \dt  \\
			& = \int_{\Omega} \frac{1}{2}|\CC_{0}|^{2}\dx + \int_{0}^{\tau}\int_{\Omega}  |\trC|^{2} + a \trC \dx\dt  + \int_{0}^{\tau}\int_{\Omega}  \left[ (\nabla\u)\CC + \CC(\nabla\u)^\top\right] : \CC \dx\dt.
			\eaa
			
		\end{Theorem}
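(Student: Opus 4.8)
\emph{Plan.} The statement has two parts, which I would prove in turn; the common engine is to view the conformation equation $(\ref{eq:uc_model})_2$ as the linear heat problem
\[
  \p_t\CC - \e\,\Delta\CC = h \text{ in } \Omega_T, \qquad \CC(0,\cdot) = \CC_0, \qquad \p_{\bf n}\CC = 0 \text{ on } (0,T)\times\p\Omega,
\]
with $h := -(\u\cdot\na)\CC + (\na\u)\CC + \CC(\na\u)^\top + \Phi(\trC)\,\I - \bigchi(\trC)\,\CC$, i.e.\ as a system of type \eqref{para-ib-Neumann}, and to split $h = h_{\mathrm r} + h_{\mathrm t}$ into the reaction part $h_{\mathrm r} := \Phi(\trC)\,\I - \bigchi(\trC)\,\CC$ and the transport/growth part $h_{\mathrm t} := -(\u\cdot\na)\CC + (\na\u)\CC + \CC(\na\u)^\top$. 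The weak-solution class of Theorem~\ref{thm:existence} gives $\Phi(\trC)\in L^2(\Omega_T)$ and $\bigchi(\trC)\CC\in L^{4/3}(\Omega_T)$, hence $h_{\mathrm r}\in L^{4/3}(\Omega_T)$. For $h_{\mathrm t}$, I would use $\u,\CC\in L^\infty(0,T;L^2)\cap L^2(0,T;H^1)$ (so $\na\u,\na\CC\in L^2(\Omega_T)$): $\u$, resp.\ $\CC$, lies in $L^{p_1}(0,T;L^{q_1}(\Omega))$ for every $(p_1,q_1)$ on the Ladyzhenskaya line $\tfrac2{p_1}+\tfrac3{q_1}=\tfrac32$ with $p_1\in(2,\infty)$, $q_1\in(2,6)$, and a Hölder estimate against $\na\CC$, resp.\ $\na\u$, in $L^2(\Omega_T)$ puts each term of $h_{\mathrm t}$ into $L^{\tilde s}(0,T;L^{\tilde r}(\Omega))$ with $\tfrac1{\tilde s}=\tfrac1{p_1}+\tfrac12$, $\tfrac1{\tilde r}=\tfrac1{q_1}+\tfrac12$ --- i.e.\ into every space permitted by \eqref{s-r-tilde}, since the relation $\tfrac2{\tilde s}+\tfrac3{\tilde r}=4$ and the ranges $\tilde s\in(1,2)$, $\tilde r\in(1,\tfrac32)$ are then automatic. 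Writing $\CC = \CC_{\mathrm r} + \CC_{\mathrm t}$, where $\CC_{\mathrm r}$ solves \eqref{para-ib-Neumann} with source $h_{\mathrm r}$ and datum $\CC_0$ and $\CC_{\mathrm t}$ solves it with source $h_{\mathrm t}$ and datum $0$, and observing that $W^{1/2,4/3}_{\bf n}(\Omega)$ is precisely the space $W^{2-\frac2p,q}_{\bf n}$ of Lemma~\ref{lem-parabolic-1} for $(p,q)=(\tfrac43,\tfrac43)$, Lemma~\ref{lem-parabolic-1} (applied with $(p,q)=(\tfrac43,\tfrac43)$, resp.\ with $(p,q)=(\tilde s,\tilde r)$) yields $\CC_{\mathrm r}\in L^{4/3}(0,T;W^{2,4/3}(\Omega))$, $\p_t\CC_{\mathrm r}\in L^{4/3}(\Omega_T)$, and $\CC_{\mathrm t}\in L^{\tilde s}(0,T;W^{2,\tilde r}(\Omega))$, $\p_t\CC_{\mathrm t}\in L^{\tilde s}(0,T;L^{\tilde r}(\Omega))$. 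Since $\CC$ itself solves \eqref{para-ib-Neumann} with source $h=h_{\mathrm r}+h_{\mathrm t}$, uniqueness in Lemma~\ref{lem-parabolic-1} --- or, safer, applying these bounds directly to the regular approximations $\CC_m$ of the proof of Theorem~\ref{thm:existence}, uniformly in $m$, and passing to the limit --- identifies $\CC = \CC_{\mathrm r} + \CC_{\mathrm t}$, which is \eqref{2nd-time-est-C}.

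\emph{Energy equality.} To obtain \eqref{cond-energy-eq} I would test $(\ref{eq:uc_model})_2$ with $\CC$ and integrate over $\Omega\times(0,\tau)$. The formal identities used are: $\int_\Omega\p_t\CC:\CC\dx = \td\big(\tfrac12\norm*{\CC(t)}_2^2\big)$; by $\di{\u}=0$ and $\u|_{\p\Omega}=0$, $\int_\Omega(\u\cdot\na)\CC:\CC\dx = \tfrac12\int_\Omega\u\cdot\na\snorm*{\CC}^2\dx = 0$; by the Neumann condition, $\int_\Omega\e\,\Delta\CC:\CC\dx = -\e\int_\Omega\snorm*{\na\CC}^2\dx$; $\Phi(\trC)\,\I:\CC = \Phi(\trC)\,\trC = \snorm*{\trC}^2 + a\,\trC$; and $\bigchi(\trC)\CC:\CC = \bigchi(\trC)\snorm*{\CC}^2 = \snorm*{\trC}^2\snorm*{\CC}^2 + a\snorm*{\trC}\snorm*{\CC}^2$. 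Summing and rearranging gives exactly \eqref{cond-energy-eq}. Two points make this rigorous. First, every term is in $L^1(\Omega_T)$: $\bigchi(\trC)\CC\in L^{4/3}$ and $\CC\in L^4$ give $\bigchi(\trC)\CC:\CC\in L^1$; $\na\u\in L^2$ and $\CC\in L^4$ give $[(\na\u)\CC+\CC(\na\u)^\top]:\CC\in L^1$; $\trC\in L^4$ gives $\Phi(\trC)\trC\in L^1$; and $\e\snorm*{\na\CC}^2\in L^1$. Second, the chain rule $\int_0^\tau\langle\p_t\CC,\CC\rangle\ddta = \tfrac12\norm*{\CC(\tau)}_2^2 - \tfrac12\norm*{\CC_0}_2^2$ must hold for a.e.\ $\tau$, and this is precisely where the Serrin bound \eqref{Serrin-s-r} is used: the weak solution only provides $\p_t\CC\in L^{4/3}(0,T;H^{-1})$, which does not pair with $\CC\in L^2(0,T;H^1)$. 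Given $(s,r)$ as in \eqref{Serrin-s-r} --- one may assume equality there, $\Omega_T$ being bounded --- I would invoke the first part with the Hölder-conjugate exponents $\tilde s = \tfrac{s}{s-1}$, $\tilde r = \tfrac{r}{r-1}$, which satisfy \eqref{s-r-tilde} exactly because $(s,r)$ satisfies \eqref{Serrin-s-r}; this gives $\p_t\CC\in L^{4/3}(\Omega_T) + L^{\tilde s}(0,T;L^{\tilde r}(\Omega))$, which pairs with $\CC\in L^4(\Omega_T)\cap L^s(0,T;L^r(\Omega))$, so $\langle\p_t\CC,\CC\rangle\in L^1(0,T)$ and the chain rule follows by a standard time-mollification argument together with $\CC\in C_w([0,T];L^2(\Omega))$.

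\emph{Main obstacle.} I expect the genuine work to lie in two places. The first is the sharp interpolation behind $h_{\mathrm t}\in L^{\tilde s}(L^{\tilde r})$ along the \emph{entire} admissible line $\tfrac2{\tilde s}+\tfrac3{\tilde r}=4$: this flexibility is exactly what makes the parameter window \eqref{s-r-tilde} the Hölder dual of the Serrin window \eqref{Serrin-s-r}, hence what closes the pairing $\langle\p_t\CC,\CC\rangle$. The second is making rigorous the chain rule for $t\mapsto\tfrac12\norm*{\CC(t)}_2^2$ in the ``sum-of-spaces'' framework $\p_t\CC\in L^{4/3}(\Omega_T)+L^{\tilde s}(L^{\tilde r})$, $\CC\in L^4(\Omega_T)\cap L^s(L^r)$ --- in particular the passage to the initial time, where $\CC$ is only weakly continuous into $L^2$ and where $\p_t\CC$ lies in a sum of two Bochner spaces rather than in $L^2(0,T;H^{-1})$; this is the point that genuinely goes beyond the classical Lions--Magenes setting.
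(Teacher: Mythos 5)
Your proposal follows essentially the same route as the paper: the same reformulation of $(\ref{eq:uc_model})_2$ as a Neumann heat problem, a two--term splitting of the source with Lemma \ref{lem-parabolic-1} applied to each summand (your assignment of $(\na\u)\CC+\CC(\na\u)^\top$ to the $L^{\tilde s}(L^{\tilde r})$ summand rather than to the $L^{4/3}(\Omega_T)$ summand is a harmless variant --- both bounds hold), and the same conjugate--exponent observation $\tilde s = s'$, $\tilde r = r'$ that makes $\CC$ an admissible test function and renders $\langle\p_t\CC,\CC\rangle$ integrable in time.

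There is, however, one genuine under-justification. You list the vanishing of the convective term, $\int_\Omega(\u\cdot\na)\CC:\CC\dx=0$, among the ``formal identities'' and then assert that only (i) $L^1$-integrability of each term and (ii) the time chain rule need to be made rigorous. That is not sufficient: the identity $\int_\Omega(\u\cdot\na)\CC:\CC\dx = -\frac12\int_\Omega(\di{\u})\,|\CC|^2\dx$ cannot be obtained by a direct integration by parts at the available regularity, and its proof is one of the two delicate points of the argument, on a par with the chain rule in time. The paper handles it by spatial Friedrichs mollification $\CC_{\de}=\phi_\de\ast\CC$ together with the DiPerna--Lions commutator lemma (Lemma \ref{lem-Fre-comm}): one must control separately the error $((\u\cdot\na)\CC)_{\de}-(\u\cdot\na)\CC_{\de}$ (which tends to $0$ in $L^{4/3}(\Omega_T)$ precisely because $\u\in L^2(0,T;W^{1,2})$ and $\CC\in L^4(\Omega_T)$), the error $(\u\cdot\na)\CC:(\CC-\CC_\de)$ (which is where the Serrin-type bound $\CC\in L^s(L^r)$ is used again), and only then exploit the exact cancellation for the smooth function $\CC_\de$. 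Your mollification framework would cover this, but as written the claim that integrability alone rigorizes the convective identity is incorrect; the commutator step must be carried out explicitly.
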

	\end{tcolorbox}
	
	We give some remarks on the additional integrability assumption.
	\begin{Remark}\phantom{2em}
		\begin{itemize}
			\item  Following  the well known Serrin type blow-up criterion on the Leray-Hopf weak solutions to the three dimensional incompressible Navier-Stokes equations, see \cite{Serrin.1962, Struwe.1988, Berselli.2002, Kim.2006}, it can be shown that if the weak solution $(\u, \CC)$ satisfies $\u, \CC \in L^{s}(0,T;L^{r}(\Omega))$ with $(s,r)$ satisfying \eqref{Serrin-s-r}, then the weak solution $(\u, \CC)$ is regular.

			\item We only assume $\CC \in L^{s}(0,T;L^{r}(\Omega))$, while the Serrin type criterion requires both $\u$ and $\CC$ are in $L^{s}(0,T;L^{r}(\Omega))$ with $s, r$ satisfying \eqref{Serrin-s-r}.
			\item A similar result can be obtained by assuming $\u\in L^4(\Omega_T)$.
			\item In two space dimensions the first result holds with $\tilde s=\tilde r=\frac{4}{3}$. Furthermore, the second result holds without further integrability assumptions.
		\end{itemize}
		
	\end{Remark}
	
	\medskip

	We continue by introducing the relative energy $\mathcal{E}$ as the first-order Taylor expansion of the energy. Let $\u,\U$ and $\CC,\HH$ be two velocity vectors and conformation tensors, respectively. We introduce the relative energy
	\begin{equation}
	 \mathcal{E}(\u,\CC\vert\U,\HH) := \mathcal{E}_{kin} + \mathcal{E}_{el} + \mathcal{E}_{frob} \label{eq:rel_en_full}
	\end{equation}
	where
	\begin{align*}
	\mathcal{E}_{kin}(\u\vert\U)=\frac{1}{2}\int_\Omega \snorm*{\u-\U}^2 \dx, \quad \mathcal{E}_{el}(\CC\vert\HH)=\frac{1}{4}\int_\Omega \snorm*{\trr{\CC-\HH}}^2 \dx, \quad \mathcal{E}_{frob}(\CC\vert\HH)=\frac{1}{2}\int_\Omega \snorm*{\CC-\HH}^2\dx.
	\end{align*}
	Since the elastic relative energy is not definite, i.e. $\mathcal{E}_{el}(\CC\vert\HH)=0 \nRightarrow \CC=\HH$, we penalize the relative energy by $\norm*{\CC-\HH}_2^2$, i.e. the relative Frobenius energy.
	
	Due to the norm properties the following properties of $\mathcal{E}$ hold
	\begin{align}
	1)\;& \mathcal{E}(\u,\CC\vert\U,\HH)(t) = 0 \Longleftrightarrow \u(t)=\U(t), \CC(t)=\HH(t) \text{ a.e. in } \Omega, \label{eq:relen_prop}\\
	2)\;& \mathcal{E}(\u,\CC\vert\U,\HH)\geq 0 \text{ and }
	\mathcal{E}(\u,\CC\vert\U,\HH) \geq \begin{cases*}
	\frac{1}{2}\norm*{\u-\U}^2_2, \\
	\frac{1}{4}\norm*{\trr{\CC-\HH}}^2_2, \\
	\frac{1}{2}\norm*{\CC-\HH}^2_2.
	\end{cases*} \nonumber
	\end{align}
	We note, that the relative energies are often not induced by norms. Nevertheless, the above properties would still hold and follow from convexity of the corresponding energy, see for instance \cite{Feireisl2012}. For further study it is convenient to set
	\begin{align}
	\mathcal{E}_1(\u,\CC\vert\U,\HH):=\mathcal{E}_{kin} + \mathcal{E}_{el}, \quad \mathcal{E}_2(\CC\vert\HH):=\mathcal{E}_{frob}. \label{eq:relen12}
	\end{align}
    We note by passing that the relative energy resulting from the energy inequality \eqref{eq:uc_energy} is more convenient for the forthcoming investigations than those derived from the free energy, cf. \eqref{eq:uc_freeenergy}.

	\begin{tcolorbox}
		\begin{Proposition}[Relative energy inequality]
			\label{theo:relative_energy}
			Let $(\u,\CC)$ be a global weak solution of the Peterlin viscoelastic system (\ref{eq:uc_model}) starting from the initial data $(\u_0,\CC_0)$. Assume that $\CC \in L^{s}(0,T; L^{r}(\Omega))$ with $(s,r)$ satisfying \eqref{Serrin-s-r}.  Let $(\U ,\HH)$ be a more regular weak solution of (\ref{eq:uc_model}) satisfying additionally
			$$
			\U \in L^8(0,T^\dagger;L^4(\Omega))\cap L^4(0,T^\dagger;H^1(\Omega)), \quad \HH \in L^\infty(0,T^\dagger;L^4(\Omega))\cap L^4(0,T^\dagger;H^1(\Omega)) ,	
			$$
			for some $T^\dagger\leq T$. Let $(\U,\HH)$ start from the initial data $(\U_0,\HH_0)$. Then the  relative energy given by (\ref{eq:rel_en_full}) satisfies the inequality
			\begin{align}
			\mathcal{E}(t) + b\mathcal{D} \leq \mathcal{E}(0)  + \int_0^t g(\dta)\mathcal{E}(\dta) \ddta, \label{eq:relgron}
			\end{align}
			for almost all $t\in(0,T^\dagger)$. Here $g\in L^1(0,T^\dagger)$, and $\mathcal{D}$ is given by (\ref{eq:rel_gron}) and $b>0$.
		\end{Proposition}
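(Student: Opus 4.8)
The plan is to derive the relative energy inequality by combining the energy inequality/equality for the weak solution $(\u,\CC)$ with suitably chosen test-function identities, and then estimating the resulting cross terms using the assumed extra regularity of the more regular solution $(\U,\HH)$ and the Serrin-type integrability $\CC\in L^s(0,T;L^r(\Omega))$. Concretely, I would write $\mathcal E(t)=\mathcal E_{kin}(\u|\U)+\mathcal E_{el}(\CC|\HH)+\mathcal E_{frob}(\CC|\HH)$ and expand each piece. For the kinetic part, one uses the energy inequality \eqref{eq:uc_energy} for $\u$ (only the part $\tfrac12\|\u\|_2^2$), the energy \emph{equality} for the smooth solution $\U$, and the mixed term $-\int_\Omega \u\cdot\U$, which is handled by using $\u$ as a test function in the $\U$-equation and $\U$ (via a density/regularization argument, legitimate since $\U$ is a regular weak solution) as a test function in the $\u$-equation. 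Adding these produces the standard Navier–Stokes relative-energy structure with the viscous term $\eta\int_{\Omega_t}|\mathrm D(\u-\U)|^2$ on the good side plus a convective remainder $\int_{\Omega_t}((\u-\U)\cdot\nabla)\U\cdot(\U-\u)$, and the stress-coupling terms $\int_{\Omega_t}(\trC\,\CC-\mathrm{tr}(\HH)\HH):\nabla(\u-\U)$.

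For the conformation-tensor parts, I would proceed analogously but now crucially invoke Theorem \ref{theo:cond-energy-eq}: under the assumption $\CC\in L^s(0,T;L^r(\Omega))$ the \emph{conditional energy equality} \eqref{cond-energy-eq} holds, so both $\tfrac12\|\CC\|_2^2$ (the Frobenius part $\mathcal E_{frob}$) and $\tfrac14\|\trC\|_2^2$ (the elastic part $\mathcal E_{el}$, obtained by testing $(\ref{eq:uc_model})_2$ against $\trC\,\I$ — which is also justified under the Serrin condition) satisfy \emph{equalities} rather than mere inequalities. The smooth solution $(\U,\HH)$ satisfies the corresponding equalities automatically. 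The mixed terms $-\int_\Omega\CC:\HH$ and $-\tfrac12\int_\Omega \trC\,\mathrm{tr}(\HH)$ are again generated by cross-testing: $\CC$ (resp.\ $\trC\,\I$) as test function in the $\HH$-equation, and $\HH$ (resp.\ $\mathrm{tr}(\HH)\I$) in the $\CC$-equation; both are admissible because $\HH\in L^\infty(L^4)\cap L^4(H^1)$ and $\CC$ has the regularity from Definition \ref{defn:weak_sol_UC} plus \eqref{Serrin-s-r}. This yields on the good side the diffusion term $\varepsilon\int_{\Omega_t}|\nabla(\CC-\HH)|^2$ (and $\tfrac\varepsilon2|\nabla\,\mathrm{tr}(\CC-\HH)|^2$), and the coercive relaxation contributions coming from $\bigchi$ and $\Phi$, all collected into the dissipation $\mathcal D$ with a fixed constant $b>0$; everything else is moved to the right-hand side as a remainder $\mathcal R(t)=\int_0^t(\cdots)\ddta$.

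The remaining, and main, task is to show $\mathcal R(t)\le \tfrac b2\mathcal D+\int_0^t g(\dta)\mathcal E(\dta)\ddta$ with $g\in L^1(0,T^\dagger)$. The remainder is a finite sum of trilinear terms in the differences $\u-\U$, $\mathrm{tr}(\CC-\HH)$, $\CC-\HH$, with coefficients built from $\U,\na\U,\HH,\na\HH,\trC,\mathrm{tr}(\HH)$. Each is estimated by Hölder, the Gagliardo–Nirenberg/Ladyzhenskaya interpolation inequalities in $\R^3$ (e.g.\ $\|\vv\|_4^2\le C\|\vv\|_2^{1/2}\|\na\vv\|_2^{3/2}$ on $V$, and the analogous bound for $\CC-\HH$), and Young's inequality to absorb the highest-order factors $\|\na(\u-\U)\|_2^2$ and $\|\na(\CC-\HH)\|_2^2$ into $b\mathcal D$. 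The leftover factors then combine into $g(\dta)\,\mathcal E(\dta)$ with $g$ a sum of powers of $\|\U\|_4^8$, $\|\na\U\|_2^4$, $\|\HH\|_4^4$, $\|\na\HH\|_2^4$, and — this is where the Serrin assumption is used a second time — $\|\trC\|_{L^r}^{\text{(power)}}$, the exponents being exactly those dictated by $\tfrac2s+\tfrac3r\le1$ so that $g\in L^1$. The delicate points I expect to be the hardest are: (i) the stress-coupling remainder $\int((\trC-\mathrm{tr}(\HH))\mathrm{something}):\nabla(\u-\U)$, which mixes the velocity and tensor differences and must be split carefully so that neither $\|\na(\u-\U)\|_2$ nor $\|\na(\CC-\HH)\|_2$ appears to a power exceeding $2$; and (ii) justifying rigorously all the cross-testing steps, i.e.\ that $\u,\CC$ (with only the regularity of Definition \ref{defn:weak_sol_UC} augmented by \eqref{Serrin-s-r}) are legitimate test functions in the $(\U,\HH)$-system and vice versa, which is done by mollification in time and space and passing to the limit, using the strong continuity $\u,\CC\in C([0,T];L^q)$ and weak continuity in $L^2$ to handle the boundary-in-time terms. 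Once $\mathcal R$ is bounded as above, \eqref{eq:relgron} follows directly, with $\mathcal D$ as in \eqref{eq:rel_gron}.
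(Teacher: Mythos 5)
Your plan follows essentially the same route as the paper: the first-order Taylor expansion of the energy combined with cross-testing each equation against the other solution, the invocation of the conditional energy equality (Theorem \ref{theo:cond-energy-eq}) to handle the Frobenius part, and the absorption of the trilinear remainders into the dissipation via H\"older, interpolation between $L^2$ and $H^1$, and Young, with the Serrin exponent of $\CC$ entering exactly where the stress-coupling and upper-convected terms mix $\CC-\HH$ with $\nabla(\u-\U)$. You also correctly identify the two delicate points (the mixed stress-coupling remainder and the admissibility of the cross test functions), so no substantive gap remains relative to the paper's argument.
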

	\end{tcolorbox}
	
	\begin{tcolorbox}
		\begin{Theorem}[Weak-strong uniqueness]
			\label{theo:wsu}
			Let the assumptions of Theorem \ref{theo:relative_energy} hold and let the initial data coincide, i.e. $\u_0=\U_0, \CC_0=\HH_0$.
			Then any weak solution $(\u,\CC)$ in the sense of Definition \ref{defn:weak_sol_UC} coincides with the more regular weak solution $(\U,\HH)$  almost everywhere in $\Omega\times(0,T^\dagger)$.
		\end{Theorem}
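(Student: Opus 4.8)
The plan is to derive Theorem \ref{theo:wsu} directly from the relative energy inequality of Proposition \ref{theo:relative_energy} combined with Gronwall's lemma. Under the standing assumptions, Proposition \ref{theo:relative_energy} supplies, for a.a. $t \in (0, T^\dagger)$,
\begin{equation*}
\mathcal{E}(t) + b\mathcal{D} \leq \mathcal{E}(0) + \int_0^t g(\dta)\mathcal{E}(\dta) \ddta,
\end{equation*}
with $g \in L^1(0, T^\dagger)$, $b > 0$ and the dissipation $\mathcal{D} \geq 0$. First I would use that the initial data coincide: since $\u_0 = \U_0$ and $\CC_0 = \HH_0$, property \eqref{eq:relen_prop} forces $\mathcal{E}(0) = 0$. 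Dropping the non-negative term $b\mathcal{D}$ then leaves
\begin{equation*}
0 \leq \mathcal{E}(t) \leq \int_0^t g(\dta)\mathcal{E}(\dta) \ddta \quad \text{for a.a. } t \in (0, T^\dagger),
\end{equation*}
where the lower bound is again \eqref{eq:relen_prop}. Because both solutions belong to the function spaces of Definition \ref{defn:weak_sol_UC} and Proposition \ref{theo:relative_energy}, one checks that $\mathcal{E} \in L^\infty(0, T^\dagger)$, so the right-hand side above is an absolutely continuous function of $t$ that vanishes at $t = 0$.

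Next I would invoke Gronwall's lemma (Lemma \ref{lem:gronwall}) in its integral form, taking, in the notation of that lemma, $\phi = \mathcal{E}$, $f = g$ (replaced by $g^+ = \max\{g, 0\} \in L^1(0, T^\dagger)$ if necessary, which only enlarges the right-hand side because $\mathcal{E} \geq 0$), and the non-decreasing function there equal to $0$. This yields $\mathcal{E}(t) \leq 0$ for a.a. $t \in (0, T^\dagger)$, and combined with $\mathcal{E} \geq 0$ we obtain $\mathcal{E}(t) = 0$ for a.a. $t \in (0, T^\dagger)$. By property \eqref{eq:relen_prop} this means $\u(t) = \U(t)$ and $\CC(t) = \HH(t)$ a.e. in $\Omega$ for a.a. such $t$, i.e. $\u = \U$ and $\CC = \HH$ almost everywhere in $\Omega \times (0, T^\dagger)$, which is precisely the assertion.

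The only step that demands some care --- and the one place a naive argument breaks --- is the appeal to Gronwall's lemma: Lemma \ref{lem:gronwall} as stated presupposes that $\phi$ be continuous, whereas $\mathcal{E}$ is a priori defined only for almost every $t$, since the weak solutions lie in $C_w([0,T];L^2)$ and the map $t \mapsto \|\u(t) - \U(t)\|_2^2$ need not be continuous. I would bypass this by working directly with the integral inequality: put $G(t) := \int_0^t g^+(\dta)\mathcal{E}(\dta)\ddta$, so that $G$ is absolutely continuous, $G(0) = 0$, and $G'(t) = g^+(t)\mathcal{E}(t) \leq g^+(t)G(t)$ for a.a. $t$; multiplying by the integrating factor $\exp\!\big(-\int_0^t g^+(\dta)\ddta\big)$ and integrating gives $G \equiv 0$, hence $\mathcal{E} \leq G = 0$ a.e. Everything else is routine bookkeeping, so no further obstacles are anticipated.
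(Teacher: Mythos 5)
Your proposal is correct and follows essentially the same route as the paper: the authors also deduce Theorem \ref{theo:wsu} by inserting $\mathcal{E}(0)=0$ into the relative energy inequality \eqref{eq:relgron} and applying the Gronwall Lemma \ref{lem:gronwall} to conclude $\mathcal{E}(t)=0$, hence coincidence of the solutions via \eqref{eq:relen_prop}. Your extra step of working with the absolutely continuous function $G(t)=\int_0^t g^+(\dta)\mathcal{E}(\dta)\ddta$ to sidestep the continuity hypothesis in Lemma \ref{lem:gronwall} is a legitimate refinement of a point the paper passes over silently, not a different argument.
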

	\end{tcolorbox}
	\begin{Remark}\phantom{2em}
	\begin{itemize}
	    \item  The above result implies the (local) uniqueness in the class of more regular solutions.
	    \item In two space dimensions by reviewing Theorem \ref{theo:cond-energy-eq}, and the corresponding remark and quick inspection of the arguments used in the proof of Theorem  \ref{theo:relative_energy}, we can conclude the uniqueness of weak solutions for initial data satisfying $\CC_0\in W^{\frac{1}{2}, \frac{4}{3}}_{{\bf n}}(\Omega)$.
	\end{itemize}
	
	\end{Remark}
	
	\section{Existence of weak solutions}
	
	In order to analyse the Peterlin viscoelastic system \eqref{eq:uc_model} we consider several energy type estimates as follows.
	By formally taking the inner product of $(\ref{eq:uc_model})_1$ with $\u$ and $(\ref{eq:uc_model})_2$ with $\trC\mathbf{I}/2$ and integrating over the domain $\Omega$ yields
	\begin{align}
	&\td \(\int_\Omega \frac{1}{2}\snorm*{\u}^2 + \frac{1}{4}\snorm*{\trC}^2 \dx \) \label{eq:cen1} \\
	&+ \int_\Omega \eta\snorm*{\Du}^2 + \frac{\varepsilon}{2}\snorm*{\na\trC}^2 + \frac{1}{2}\(\snorm*{\trC}^4 + a\snorm*{\trC}^3\) - \frac{1}{2}\(\snorm*{\trC}^2 + a\trC\) \dx \leq 0. \nonumber
	\end{align}
	Estimating the last integral of (\ref{eq:cen1}) by the Hölder inequality we find after applying the Gronwall Lemma \ref{lem:gronwall} that
	\begin{align}
	\u \in & \;L^\infty(0,T;L^2(\Omega))\cap L^2(0,T;H^1(\Omega)), \label{eq:uc_reg1}\\
	\trC \in & \; L^\infty(0,T;L^2(\Omega))\cap L^2(0,T;H^1(\Omega))\cap L^4(\Omega_T). \nonumber
	\end{align}
	Since for a smooth solution the matrix $\CC$ is positive definite, we find also that $\CC\in L^4(\Omega_T)$, due to the norm equivalence in Proposition \ref{prop:matineq}.
	Now we can take the Frobenius inner product of $(\ref{eq:uc_model})_2$ with $\CC/2$ and obtain
	\begin{align}
	&\td \(\int_\Omega \frac{1}{4}\snorm*{\CC}^2 \dx \) + \frac{\varepsilon}{2}\int_\Omega \snorm*{\na\CC}^2 + \frac{1}{2}\(\snorm*{\trC\CC}^2 + a\snorm*{\trC}\snorm*{\CC}^2\) \dx \nonumber \\
	&\leq \int_\Omega \trC^2 + a\trC \dx + 2\int_\Omega (\na\u\CC):\CC \dx. \label{eq:cen2}
	\end{align}
	The first integral of (\ref{eq:cen2}) can be treated as in (\ref{eq:cen1}). The second integral of (\ref{eq:cen2}) can be bounded as follows
	\begin{equation}
	2\int_\Omega (\na\u\CC):\CC \dx\leq 2\norm*{\na\u}_2\norm*{\CC}_4^2 \leq \norm*{\na\u}_2^2 + \norm*{\CC}_4^4. \label{eq:cen2_est}
	\end{equation}
	Using again the Gronwall Lemma \ref{lem:gronwall} on \eqref{eq:cen2} with \eqref{eq:cen2_est} yields the  following estimates:
	\begin{equation}
	\CC \in L^\infty(0,T;L^2(\Omega))\cap L^2(0,T;H^1(\Omega)). \label{eq:uc_reg2}
	\end{equation}
	The free energy inequality (\ref{eq:uc_freeenergy}) can be formally derived by taking the inner product of $(\ref{eq:uc_model})_1$ with $\u$ and $(\ref{eq:uc_model})_2$ with $\trC\mathbf{I}/2-\CC^{-1}/2$ and applying (\ref{eq:jacobi}):
	\begin{align}
	\td \Big(&\int_\Omega \frac{1}{2}\snorm*{\u}^2 + \frac{1}{4}\snorm*{\trC}^2 - \frac{1}{2}\trlnC \dx \Big) \label{eq:uc_free1}\\
	+ &\int_\Omega \eta\snorm*{\Du}^2 + \frac{\varepsilon}{2}\snorm*{\na\trC}^2 - \frac{\varepsilon}{2}\na\CC:\na\CC^{-1} \dx \nonumber\\
	+&\frac{1}{2}\int_\Omega \snorm*{\trC}^4 + a\snorm*{\trC}^3 - d\snorm*{\trC}^2 + ad\trC \dx\nonumber \\
	-&\frac{1}{2}\int_\Omega d\snorm*{\trC}^2 + ad\snorm*{\trC} - \trC\mathrm{tr}(\CC^{-1}) - a\mathrm{tr}(\CC^{-1}) \dx \leq 0 . \nonumber
	\end{align}
	To proceed, we first expand the diffusion term involving the inverse matrix by
	\begin{align*}
	\int_\Omega \na\CC::\na\(\CC^{-1}\)\dx &= -\sum_{|\alpha|=1}\int_\Omega \partial^\alpha_x\CC:\CC^{-1}\partial_x^\alpha\CC\CC^{-1}\dx \\
	&= -\sum_{|\alpha|=1}\int_\Omega \tr{ \CC^{-1/2}\partial_x^\alpha\CC\CC^{-1/2}\CC^{-1/2}\partial_x^\alpha\CC\CC^{-1/2} }\dx\\
	&= -\sum_{|\alpha|=1}\norm*{\CC^{-1/2}\partial_x^\alpha\CC\CC^{-1/2}}_2^2 =: -\norm*{\CC^{-1/2}\na\CC\CC^{-1/2}}^2_2\leq -\frac{1}{d}\norm*{\na\mathrm{tr}(\log\CC)}^2_2.
	\end{align*}
	Here we have used the cyclic property of the trace, symmetry of $\CC$, the existence of a square root $\CC^{1/2}$ and Lemma \ref{lem:matinv}. Rewriting (\ref{eq:uc_free1})  yields
	\begin{align}
	&\td \(\int_\Omega \frac{1}{2}\snorm*{\u}^2 + \frac{1}{4}\snorm*{\trC}^2 - \frac{1}{2}\trlnC \dx \) \label{eq:cn3}\\
	&+ \int_\Omega \eta\snorm*{\Du}^2 + \frac{\varepsilon}{2}\snorm*{\na\trC}^2 + \frac{\varepsilon}{6}\snorm*{\na\tr{\log\CC}}^2 \dx \nonumber\\
	&+ \frac{1}{2}\int_\Omega \(\trC^2 + a\trC \)\mathrm{tr}(\TT + \TT^{-1} - 2\I) \dx \leq 0.  \nonumber
	\end{align}
	Since $\CC$ is symmetric positive definite, applying Lemma \ref{lem_logid} and the bounds \eqref{eq:uc_reg1} we obtain from (\ref{eq:cn3}) the additional information
	\begin{align*}
	\trlnC &\in L^\infty(0,T;L^1(\Omega)), \quad \na\mathrm{tr}(\log\CC), \hspace{0.4em}\CC^{-1/2}\na\CC\CC^{-1/2} \in L^2(\Omega_T),\\
	a \trr{\CC^{-1}} &, \hspace{0.4em} \trr{\CC^{-1}}\trC \in L^1(\Omega_T).
	\end{align*}
	The control of $\trr{\log \CC}$ in $L^\infty(0,T;L^1(\Omega))$ can be obtained as follows. The free energy is non-negative and bounded from below by Lemma \ref{lem_logid}. The upper bound follows from the inequality \eqref{eq:cn3}. The velocity and the trace contribution are already bounded in $L^\infty(0,T;L^1(\Omega))$, hence we obtain the desired bound.

	\subsection{Galerkin approximation and a priori estimates}
	The goal of this section is to derive an approximation scheme based on the Galerkin method for the velocity $\u$, see \cite{LukacovaMedvidova.2017}.
	Let $\vv_j, j=1,\ldots,\infty$ be smooth basis functions of $V = \overline{\spn\{\vv_j\}_{j=1}^{\infty}}$. Here the $\vv_j$ are divergence-free and subjected to the homogeneous Dirichlet boundary conditions. Then we define the $m$-th Galerkin approximation of $\u$ by
	\begin{equation}
	\u_m(x,t) = \sum_{j=1}^m g_{jm}(t)\vv_j(x), \quad \u_m(0) = \u_{0m}.
	\end{equation}
	Furthermore, $\CC_m(\u_m)$ denotes the solution of the parabolic problem $(\ref{eq:uc_model})_2$ for $\CC_m$, i.e.
	\begin{align}
	   \int_\Omega \frac{\partial \u_m}{\partial t}\cdot\vv &+ (\u_m\cdot\na)\u_m\cdot\vv + \eta\Du_m:\mathrm{D}\vv - \trr{\CC_m}\CC_m:\na\vv \dx=0, \label{eq:nsm}\\
	   \frac{\partial \CC_m}{\partial t} + (\u_m\cdot\na)\CC &- \na\u_m\CC_m+\CC_m\na\u_m^\top - \bigchi(\trr{\CC_m})\CC_m + \Phi(\trr{\CC_m})\I + \varepsilon\Delta\CC_m =0. \label{eq:cm}
	\end{align}
	Due to standard theory for ordinary differential equations there exists a finite-dimensional approximation of the velocity $\u_m$.
	Uniform bounds imply the existence up to time $T$ for all $m$. Further, parabolic regularity, see \cite{Pruess.2016}, and the bounds on the velocity $\u_m$ show that there is a conformation stress tensor $\CC_m\in C^1((0,T];C^2(\Omega))$.
	Since $\CC_m$ is positive definite for every $m$, see \cite{LukacovaMedvidova.2017}, we obtain the following regularity result by integrating the Galerkin approximations of (\ref{eq:cen1}) and (\ref{eq:cen2}) in time
	\begin{align}
	\u_m & \in_{bdd} L^\infty(0,T;L^2(\Omega))\cap L^2(0,T;H^1(\Omega)), \label{eq:uc_discest1}\\
	\trr{\CC_m}& \in_{bdd} L^\infty(0,T;L^2(\Omega))\cap L^2(0,T;H^1(\Omega))\cap L^4(\Omega_T), \nonumber\\
	\CC_m& \in_{bdd} L^\infty(0,T;L^2(\Omega))\cap L^2(0,T;H^1(\Omega))\cap L^4(\Omega_T).\nonumber
	\end{align}
	
	Using (\ref{eq:uc_discest1}) yields
	\begin{equation}
	\bigchi(\trr{\CC_m})\CC_m  \in_{bdd}  L^{\frac{4}{3}}(\Omega_T), \quad \Phi(\trr{\CC_m})  \in_{bdd} L^2(\Omega_T). \label{eq:uc_discest2}
	\end{equation}
	Here the notation $\in_{bdd}$ means the function family is uniformly bounded in the corresponding space.
	\subsection{Compact embeddings}
	In order to derive suitable integrability of the time derivative we rewrite the Navier-Stokes \eqref{eq:nsm} as operator equation
	\begin{equation}
	\frac{\partial \u_m}{\partial t} = -\mathcal{A}\u_m + \mathcal{B}\u_m + \mathcal{H}\CC_m,
	\end{equation}
	with the operators $\mathcal{A}, \mathcal{B}, \mathcal{H}$ defined by
	\begin{align*}
	\mathcal{A}&:V \to V^{*} & &\inp{\mathcal{A}\u,\vv}:= \int_\Omega \eta \Du:\mathrm{D}\vv,\hspace{4.9em} \vv\in V, \\
	\mathcal{B}&:V \to V^{*} & &\inp{\mathcal{B}\u,\vv}:= -\int_\Omega (\u\cdot\na)\u \cdot \vv,\hspace{3.4em} \vv \in V, \\
	\mathcal{H}&:H^1(\Omega) \to V^{*} & &\inp{\mathcal{H}\CC,\vv}:= -\int_\Omega \tr{\CC}\CC:\na\vv, \hspace{2.0em}\vv \in V.
	\end{align*}
	By using Sobolev embedding, the following estimate holds
	\begin{align}
	\int_0^T \norm*{\frac{\partial \u_m}{\partial t}}_{V^*}^p\dt &\leq c\int_0^T \norm*{\Du_m}_2^p + \norm*{\trr{\CC_m}\CC_m}_2^p + \norm*{\u_m}_3^p\norm*{\na\u_m}_2^p \dt. \label{eq:uc_treg}
	\end{align}
	Using the regularity result (\ref{eq:uc_discest1}) we find that $\frac{\partial\u_m}{\partial t} \in_{bdd} L^{\frac{4}{3}}(0,T;V^*)$, i.e. taking $p=\frac{4}{3}$ in (\ref{eq:uc_treg}).
	
	Next we consider the evolution equation for the conformation tensor \eqref{eq:cm} which can be rewritten as an operator equation of the form
	\begin{equation}
	\frac{\partial \CC_m}{\partial t} + \varepsilon\Delta\CC_m = \mathbf{F}_m \label{eq:uc_operatorC}
	\end{equation}
	with $\mathbf{F}_m:=-(\u_m\cdot\na)\CC_m + (\na\u_m)\CC_m + \CC_m(\na\u_m)^T - \bigchi(\tr{\CC_m})\CC_m + \Phi(\tr{\CC_m})\I $.
	We calculate
	\begin{align}
	\hspace{-1em}\int_0^T\norm*{\mathbf{F}_m}_{H^{-1}}^p\dt \leq &\int_0^T \norm*{\u_m}_3^p\norm*{\na\CC_m}_2^p + \norm*{\CC}_4^p\norm*{\na\u_m}_2^p \nonumber\\
	&+ \norm*{\bigchi(\tr{\CC_m})\CC_m}_{4/3}^p + \norm*{\Phi(\tr{\CC_m})}_2^p \dt. \label{eq:uc_Fspace}
	\end{align}
	Standard calculations using (\ref{eq:uc_discest1}), (\ref{eq:uc_discest2}) show that $\mathbf{F}_m \in_{bdd} L^{\frac{4}{3}}(0,T;H^{-1}(\Omega))$, i.e. $p=\frac{4}{3}$ in (\ref{eq:uc_Fspace}). This implies by bootstrapping that
	\begin{equation*}
	\frac{\partial \CC_m}{\partial t} \in_{bdd} L^{\frac{4}{3}}(0,T;H^{-1}(\Omega)), \quad \CC_m\in_{bdd} L^{\frac{4}{3}}(0,T;H^1(\Omega)). \label{eq:uc_disctd}
	\end{equation*}
	
	Using these estimates and the Aubin-Lions Lemma we have for suitable subsequences the following convergence results
	\begin{align}
	&\u_m \rightharpoonup^{\star} \u \in L^\infty(0,T;L^2(\Omega)), &\CC_m &\rightharpoonup^{\star} \CC \in L^\infty(0,T;L^2(\Omega)), \label{eq:uc_convtable}\\
	&\u_m \rightharpoonup \u \in L^2(0,T;V)\cap L^{\frac{10}{3}}(\Omega_T), &\CC_m &\rightharpoonup \CC \in L^2(0,T;H^1(\Omega))\cap L^{4}(\Omega_T),\nonumber \\
	&\u_m \rightarrow \u  \in L^2(0,T;L^p(\Omega)) \text{ for } p<6, &\CC_m &\rightarrow \CC  \in L^2(0,T;L^p(\Omega)) \text{ for } p<6,\nonumber \\
	&\u_m \rightharpoonup \u \text{ a.e. in } \Omega\times(0,T), &\CC_m &\rightharpoonup \CC \text{ a.e. in } \Omega\times(0,T),\nonumber \\
	&\frac{\partial \u_m}{\partial t} \rightharpoonup \frac{\partial \u}{\partial t} \in L^{\frac{4}{3}}(0,T;H^{-1}(\Omega)), &\frac{\partial \CC_m}{\partial t} &\rightharpoonup \frac{\partial \CC}{\partial t} \in L^{\frac{4}{3}}(0,T;H^{-1}(\Omega)).\nonumber
	\end{align}
	
	Furthermore, considering the Galerkin approximation of the energy inequality (\ref{eq:cn3}) we obtain
	\begin{align}
	\norm*{\trr{\CC_m}^2 - 2\trr{\log\CC_m}}_{L^\infty(L^1)} &+ \norm*{\na\trr{\log\CC_m}}_{L^2(L^2)}+\norm*{\CC_m^{-1/2}\na\CC_m\CC_m^{-1/2}}_{L^2(L^2)} \leq c,\label{eq:uc_freeendisc}\\
	a\norm*{\trr{\CC_m^{-1}}}_{L^1(L^1)} &+  \norm*{\trr{\CC_m}\trr{\CC_m^{-1}}}_{L^1(L^1)} \leq c, \nonumber\\
	\trr{\log\CC_m} &\rightharpoonup \overline{\trr{\log\CC_m}} \in L^2(0,T;H^1(\Omega)).\nonumber
	\end{align}
	
	\subsection{Limit passing}
	In this section we will pass to the limit  in the Galerkin approximation of \eqref{eq:nsm} and \eqref{eq:cm} as $m\to \infty$. Here we will focus on the limiting process in the main nonlinearities of \eqref{eq:nsm} and \eqref{eq:cm}. Let $\varphi\in L^\infty(0,T)$ be a time dependent test function. We start with the Galerkin approximation of the Navier-Stokes equations \eqref{eq:nsm} and consider the elastic stress tensor term
	\begin{align*}
	P_{1,m}&:=\int_0^T\int_\Omega (\trr{\CC_m}\CC_m - \trC\CC):\na\vv\varphi \dx\dt \\
	&= \int_0^T\int_\Omega (\trr{\CC_m-\CC})\CC_m:\na\vv\varphi + \trC(\CC_m-\CC):\na\vv\varphi \dx\dt\\
	&\leq \int_0^T\(\norm*{\CC_m-\CC}_4\norm*{\trC}_4 + \norm*{\trr{\CC_m-\CC}}_4\norm*{\CC_m}_4\)\norm*{\na\vv}_2\norm*{\varphi}_\infty \dt\\
	&\leq c\(\norm*{\CC_m-\CC}_{L^2(L^4)}^2\norm*{\trC}_{L^2(L^4)}^2 + \norm*{\trr{\CC_m-\CC}}_{L^2(L^4)}^2\norm*{\CC_m}_{L^2(L^4)}^2\).
	\end{align*}
	Since $\trr{\CC_m}, \CC_m$ are strongly convergent to $\trC, \CC$ in $L^2(0,T;L^4(\Omega))$, cf. (\ref{eq:uc_convtable}), we get $P_{1,m}\to 0$ as $m\to\infty$.

	We now turn to the conformation tensor equation \eqref{eq:cm} and first consider
	\begin{align}
	P_{2,m}:=\int_0^T\int_\Omega \bigchi(\trr{\CC_m})\CC_m:\DD\varphi(t)\dx\dt. \label{eq:uc_chilim}
	\end{align}
	The integrand of $P_{2,m}$ is bounded in $L^r(\Omega_T)$ for $\frac{1}{r} = \frac{3}{4} + \frac{1}{6}$, which yields $r=\frac{12}{11}>1$. Therefore, by the Vitali Lemma, see \cite{Folland.2011}, (\ref{eq:uc_chilim}) converges to its limit $P_2$ since the integrand is continuous and convergent a.e. in $\Omega_T$, cf. (\ref{eq:uc_convtable}).
	
	Finally, we consider the upper convective derivative
	\begin{align}
	P_{3,m}:=&\int_0^T\int_\Omega \Big[(\na\u_m)\CC_m - (\na\u)\CC + \CC_m(\na\u_m)^T - \CC(\na\u)^T\Big]:\DD\varphi \dx\dt \label{eq:uc_limucm} \\
	= & \int_0^T\int_\Omega  \Big[(\na\u_m-\na\u)\CC_m + \na\u(\CC_m-\CC) + \CC_m(\na\u_m-\na\u)^T \\
	&   \qquad \qquad + (\CC_m-\CC)(\na\u)^T\Big]:\DD\varphi \dx\dt. \nonumber
	\end{align}
	Thanks to the strong convergence of $\CC_m$ in $L^2(0,T;L^4(\Omega))$ and the weak convergence of $\na\u_m$ in $L^2(\Omega_T)$, cf. (\ref{eq:uc_convtable}), $P_{3,m}\to 0$ as $m\to\infty$. The limit process in other terms is standard, cf. \cite{LukacovaMedvidova.2017}.

	\subsection{Energy inequalities and positive definiteness of the conformation tensor}
	In this section we will consider the limiting process in the energy inequalities \eqref{eq:uc_energy}, \eqref{eq:uc_freeenergy}. Furthermore, for the limit in the free energy inequality \eqref{eq:uc_freeenergy} we need to prove that the limiting conformation tensor $\CC$ is positive definite a.e. in $\Omega_T$.\\
	First we consider the limit in the discrete version of (\ref{eq:cen1}), cf.~\eqref{eq:uc_energy}. We observe that due to the convergence given by (\ref{eq:uc_convtable}) we can apply the same arguments as in \cite{Brunk.}. Indeed, applying the Fatous Lemma, we have for a weakly convergent sequence $\{g_m\}_{m=1}^\infty$, $g_m \rightharpoonup^{\star} g$ that
	\begin{align*}
	    \norm{g(t)}_2 \leq \norm{g}_{L^\infty(0,T;L^2(\Omega))}&\leq \liminf_{m\to\infty}\norm{g_m}_{L^\infty(0,T;L^2(\Omega))}, \\
	    \norm{g}_{L^2(0,T;L^2(\Omega))} &\leq \liminf_{m\to\infty}\norm{g_m}_{L^2(0,T;L^2(\Omega))}.
	\end{align*}
Consequently, we derive
	\begin{align}
	&\(\int_\Omega \frac{1}{2}\snorm*{\u(t)}^2 + \frac{1}{4}\snorm*{\trCt}^2 \dx \) +\int_{\Omega_t}\eta\snorm*{\Du}^2 + \frac{\varepsilon}{2}\snorm*{\na\trC}^2 + \frac{1}{2}\( \snorm*{\trC}^4 + a\snorm*{\trC}^3\) \dx\ddta\ \nonumber \\
	&\leq  \frac{1}{2}\int_{\Omega_t} \snorm*{\trC}^2 + a\trC \dx\ddta + \(\int_\Omega \frac{1}{2}\snorm*{\u(0)}^2 + \frac{1}{4}\snorm*{\trCo}^2 \dx \). \label{eq:icen1}
	\end{align}
	Here we have used the strong convergence of $\CC_m$, cf. (\ref{eq:uc_convtable}), to pass to the limit in the first integral on the right-hand side of (\ref{eq:icen1}).

	In what follows we want to prove a similar limit for the Galerkin approximation of (\ref{eq:uc_freeenergy}). Here we follow the ideas in \cite{Barrett.2011,Barrett.2017,Barrett.2012b,Barrett.2018c}. In order to identify the limit correctly we first need to prove the positive definiteness of the limit $\CC$, since all approximations $\CC_m$ are positive definite by construction. Repeating the same calculations yielding to (\ref{eq:cn3}) for the Galerkin approximations we deduce that
	\begin{equation}
	\int_{\Omega_T}\trCim \dx\dt \leq c(a), \label{eq:uc_invc}
	\end{equation}
	where the constant $c(a)$ depends inversely on $a$, i.e. it blows up for $a\to0$. Using the positive definiteness of $\CC_m$  and the estimate (\ref{eq:uc_invc}) we obtain the following estimates on  $\CC_m^{-1}$
	\begin{equation}
	 \int_{\Omega_T} |\CC_m^{-1}| \dx\dt \leq c(a). \label{eq:uc_fullinva}
	\end{equation}
	With these result at hand we can prove the following useful lemma by contradiction.
	\begin{Lemma}
		\label{lema:uc_spd}
		Let $a>0$ and $\CC$ be the limit of the sequence of positive definite solutions $\CC_m$ of (\ref{eq:uc_operatorC}). Then the limit $\CC$ is positive definite a.e. in $\Omega_T$. If $a=0$ we can conclude positive semi-definiteness of the limit solution $\CC$ a.e. in $\Omega_T$.
	\end{Lemma}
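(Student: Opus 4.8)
The plan is to run a pointwise-almost-everywhere argument on $\Omega_T$, combining the a.e.\ convergence $\CC_m\to\CC$ recorded in \eqref{eq:uc_convtable} with the uniform bound \eqref{eq:uc_invc} on $\mathrm{tr}(\CC_m^{-1})$ (which was obtained by repeating, at the Galerkin level, the computation leading to \eqref{eq:cn3}). Two elementary facts will be used repeatedly: the smallest eigenvalue $\lambda_{\min}(\cdot)$ of a symmetric $3\times3$ matrix depends continuously on the matrix entries, and for any symmetric positive definite $\DD$ one has $\mathrm{tr}(\DD^{-1})=\sum_i\lambda_i(\DD)^{-1}\geq\lambda_{\min}(\DD)^{-1}$.

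First I would settle the semi-definite statement, which already covers the case $a=0$ and is in any case needed as input for $a>0$. For a.e.\ $(x,t)\in\Omega_T$ the limit $\CC(x,t)$ is symmetric, being a pointwise limit of symmetric matrices, and $\lambda_{\min}(\CC(x,t))=\lim_{m\to\infty}\lambda_{\min}(\CC_m(x,t))\geq0$ by continuity of eigenvalues, since every $\CC_m$ is positive definite (a classical, pointwise SPD solution). Hence $\CC$ is symmetric positive semi-definite a.e.\ in $\Omega_T$.

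For $a>0$ I would argue by contradiction. By the previous step, the set where $\CC$ fails to be positive definite is exactly $E:=\{(x,t)\in\Omega_T:\lambda_{\min}(\CC(x,t))=0\}$; suppose it has positive Lebesgue measure. For a.e.\ $(x,t)\in E$ we have $\CC_m(x,t)\to\CC(x,t)$, hence $\lambda_{\min}(\CC_m(x,t))\to0$ and therefore $\mathrm{tr}(\CC_m^{-1}(x,t))\geq\lambda_{\min}(\CC_m(x,t))^{-1}\to+\infty$, so $\liminf_{m\to\infty}\mathrm{tr}(\CC_m^{-1}(x,t))=+\infty$ a.e.\ on $E$. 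Applying Fatou's lemma to the non-negative measurable functions $\mathbf{1}_E\,\mathrm{tr}(\CC_m^{-1})$ and invoking \eqref{eq:uc_invc},
\[
\int_E\liminf_{m\to\infty}\mathrm{tr}(\CC_m^{-1})\dx\dt\ \leq\ \liminf_{m\to\infty}\int_{\Omega_T}\mathrm{tr}(\CC_m^{-1})\dx\dt\ \leq\ c(a)<\infty,
\]
whereas the left-hand side equals $+\infty$ because its integrand is $+\infty$ a.e.\ on the positive-measure set $E$ — a contradiction. Thus $|E|=0$ and $\CC$ is positive definite a.e.\ in $\Omega_T$.

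The argument is essentially soft; the only genuine ingredient is the a priori estimate \eqref{eq:uc_invc} (equivalently the bound \eqref{eq:uc_fullinva} on $\CC_m^{-1}$ in $L^1(\Omega_T)$), whose constant $c(a)$ degenerates as $a\downarrow0$ — which is precisely why strict positive definiteness cannot be claimed in the limit when $a=0$. The one bookkeeping point to watch is that the exceptional null sets (where $\CC_m\not\to\CC$, where $\CC$ fails to be symmetric, or where the eigenvalue convergence fails) are countably many, so their union is still Lebesgue-null; this is routine and I do not expect it to be the obstacle. If there is any subtlety at all, it is in justifying that \eqref{eq:uc_invc} does pass through the Galerkin scheme with a constant independent of $m$, but this is exactly the content of the estimate as stated just before the lemma.
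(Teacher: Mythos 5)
Your proof is correct, and while it shares the paper's overall strategy (establish semi-definiteness first, then argue by contradiction using the uniform bound $\|\trr{\CC_m^{-1}}\|_{L^1(\Omega_T)}\leq c(a)$ from \eqref{eq:uc_invc}), the mechanism of the contradiction is genuinely different. The paper constructs a measurable unit null-eigenvector field $\vv$ supported on the bad set $D$, writes $|D|=\int_{\Omega_T}|\CC_m^{-1/2}\CC_m^{1/2}\vv|$, applies Cauchy--Schwarz to get $|D|\leq c(a)\bigl(\int_{\Omega_T}|\vv^{\top}\CC_m\vv|\bigr)^{1/2}$, and then uses the \emph{strong} $L^2(\Omega_T)$ convergence of $\CC_m$ to send the right-hand side to $\bigl(\int_{\Omega_T}|\vv^{\top}\CC\vv|\bigr)^{1/2}=0$. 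You instead exploit the pointwise blow-up $\trr{\CC_m^{-1}}\geq\lambda_{\min}(\CC_m)^{-1}\to+\infty$ a.e.\ on the degenerate set and conclude with Fatou. Your route buys two small simplifications: it needs only the a.e.\ convergence of $\CC_m$ (not strong $L^2$ convergence), and it sidesteps the measurable selection of the eigenvector field $\vv$, which the paper asserts without comment. The paper's route avoids the (trivial) spectral lower bound on $\trr{\CC_m^{-1}}$ but is otherwise no shorter. Both proofs degenerate identically as $a\downarrow0$ through the constant $c(a)$, and both reduce to the same semi-definiteness statement when $a=0$, so the logical content is the same.
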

	\begin{proof}
		Assume the existence of a set $D\subset\Omega_T$ having non-zero measure  such that $\CC(x,t)$ is not positive definite for $(x,t)\in D$. By construction $\CC$ is the limit of positive definite sequence $\CC_m$ which yields that $\CC$ is positive semi-definite, due to the strong convergence of $\CC_m$ in $L^2(\Omega_T)$, cf. (\ref{eq:uc_convtable}). This already finishes to proof for the case $a=0$. \\
		This implies that $\CC$ has at least one zero eigenvalue in $D$. Thus, there exists a vector function $\vv\in L^\infty(\Omega_T)^d$ such that $\snorm*{\vv}=1$ in $D$ and $\vv=\mathbf{0}$ in $\Omega_T\setminus D$, such that $\vv^T\CC\vv = 0$ a.e. in $\Omega_T$.  We estimate the measure of $D$ by
		\begin{align}
		\snorm*{D} = \int_D \snorm*{\vv} \dx\dt = \int_{\Omega_T} \snorm*{\vv} \dx\dt&= \int_{\Omega_T} \snorm*{\CC_m^{-1/2}\CC_m^{1/2}\vv} \dx\dt\nonumber\\
		&\leq \(\int_{\Omega_T} \snorm*{\CC_m^{-1/2}}^2 \dx\dt\)^{\frac{1}{2}} \(\int_{\Omega_T} \snorm*{\CC_m^{1/2}\vv}^2 \dx\dt\)^{\frac{1}{2}} \nonumber\\
		&\leq \(\int_{\Omega_T} \snorm*{\CC_m^{-1}} \dx\dt\)^{\frac{1}{2}} \(\int_{\Omega_T} \snorm*{\vv^T\CC_m\vv} \dx\dt\)^{\frac{1}{2}} \nonumber\\
		&\leq c(a)\(\int_{\Omega_T} \snorm*{\vv^T\CC_m\vv} \dx\dt\)^{\frac{1}{2}}. \label{eq:uc_limitindef}
		\end{align}
		It is easy to see that if $a>0$ then $c(a)$ is bounded and the right side of the inequality (\ref{eq:uc_limitindef}) converges as $m\to\infty$ since $\CC_m$ converges strongly to $\CC$ in $L^2(\Omega_T)$, cf. (\ref{eq:uc_convtable}). However, by assumption $\vv^T\CC\vv = 0$ a.e. in $\Omega_T$. Consequently, $|D|=0$, which is a contradiction and implies that $\CC$ is positive definite a.e. in $\Omega_T$.
	\end{proof}
	
	In the case $a>0$ we can now pass to the limit in the Galerkin approximation of the free energy inequality (\ref{eq:uc_freeenergy}). 
	Let us rewrite the terms in (\ref{eq:uc_freeendisc}) as follows,
	\begin{equation}
	\trr{\CC_m^{-1}}= h_1 \circ \CC_m,\quad \trr{\CC_m}\trr{\CC_m^{-1}}= h_2 \circ \CC_m,\quad \trr{\log\CC_m}= h_3 \circ \CC_m,
	\end{equation}
	where $h_i: (0,\infty) \to\mathbb{R}, i=1,\ldots,3$ are continuous functions.
	Since $\CC_m,\CC>0$ a.e. in $\Omega_T$ and converge for a.e. $(x,t)$ in $\Omega\times(0,T)$, cf. (\ref{eq:uc_convtable}), it is easy to see that
	\begin{equation}
	h_1 \circ \CC_m \longrightarrow h_1 \circ \CC,\quad h_2 \circ \CC_m \longrightarrow h_2 \circ \CC,\quad h_3 \circ \CC_m \longrightarrow h_3 \circ \CC \text{ a.e. in } \Omega_T \label{eq:uc_aeinvlog}
	\end{equation}
	see, e.g., \cite[Exercise 2.37]{Folland.2011}. Applying (\ref{eq:uc_aeinvlog}) for $(\ref{eq:uc_freeendisc})_2$ we  conclude that $\overline{\trr{\log\CC_m}}=\trr{\log\CC}$ a.e. in $\Omega_T$, i.e.
	\begin{equation}
	\trr{\log\CC_m} \rightharpoonup \trr{\log\CC} \in L^2(0,T;H^1(\Omega)). \label{uc_limtrlog}
	\end{equation}
	We proceed with the terms in $(\ref{eq:uc_freeendisc})_1$. Since $\CC_m$ is positive definite a.e. in $\Omega_T$, so is  $\CC_m^{-1}$.  Consequently, we have $\tr{\CC_m^{-1}}>0$ a.e. in $\Omega_T$. Application of the Fatou Lemma yields
	\begin{equation}
	\iQt \trr{\CC^{-1}} \dx\ddta = \iQt \overline{\trr{\CC_m^{-1}}} \dx\ddta \leq \liminf_{m\to\infty} \iQt \trr{\CC_m^{-1}} \dx\ddta. \label{eq:uc_limc-1}
	\end{equation}
	We have used here $\overline{\trr{\CC_m^{-1}}}=\trr{\CC^{-1}}$ a.e. in $\Omega_T$. Similarly, we derive for a.a. $t\in(0,T)$
	\begin{equation}
	\iQt \trr{\CC^{-1}}\trC \dx\ddta = \iQt \overline{\trr{\CC_m^{-1}}\trr{\CC_m}} \dx\ddta \leq \liminf_{m\to\infty} \iQt \trr{\CC_m^{-1}}\trr{\CC_m} \dx\ddta. \label{eq:uc_limc-1c}
	\end{equation}

	Having obtained the convergences (\ref{eq:uc_convtable}), (\ref{uc_limtrlog}), (\ref{eq:uc_limc-1}), (\ref{eq:uc_limc-1c}) we can pass to the limit in the free energy inequality and obtain for a.a. $t\in(0,T)$
	\begin{align}
	& \(\int_\Omega \frac{1}{2}\snorm*{\u(t)}^2 + \frac{1}{4}\snorm*{\trCt}^2 - \frac{1}{2}\trlnCt\dx \) \label{eq:icn3} \\
	&+ \iQt \eta\snorm*{\Du}^2 + \frac{\varepsilon}{2}\snorm*{\na\trC}^2 + \frac{\varepsilon}{6}\snorm*{\na\tr{\log\CC}}^2 + \frac{1}{2}\(\trC^2 + a\trC \)\mathrm{tr}(\TT + \TT^{-1} - 2\I ) \dx\ddta \nonumber\\
	&\leq \(\int_\Omega \frac{1}{2}\snorm*{\u(0)}^2 + \frac{1}{4}\snorm*{\trCo}^2 - \frac{1}{2}\trlnCo \dx \).  \nonumber
	\end{align}

	\section{Parabolic Regularity and Conditional Energy Equality}
	This section is devoted to the proof of Theorem \ref{theo:cond-energy-eq}.  We shall show that the weak solution $(\u,\CC)$ obtained in Theorem \ref{thm:existence} satisfies the energy equality \eqref{cond-energy-eq} under additional integrability assumption $\CC \in L^{s}(0,T; L^{r}(\Omega))$ with $(s,r)$ satisfying \eqref{Serrin-s-r}.
	
	\medskip
	
	\subsection{Parabolic regularity of $\CC$}\label{sec:para-reg}
	We rewrite equation $\eqref{eq:uc_model}_2$ for each component $\CC_{i,j}, 1\leq i,j\leq 3,$ as
	\baa\label{eq-C-ij-0}
	&\frac{\partial\CC_{i,j} }{\partial t}  -  \varepsilon\Delta\CC_{i,j} = h_{1,i,j} + h_{2,i,j},  \quad &&\mbox{in} \  \Omega_T,\\
	&\p_{\bf n} \CC_{i,j} = 0 \quad &&\mbox{on} \ (0,T)\times \p\Omega, \\
	& \CC_{i,j}(0,\cdot) = (\CC_{0})_{i,j}\quad &&\mbox{in} \ \Omega,
	\eaa
	where
	\baa
	h_{1} : = -(\u\cdot \nabla ) \CC, \quad h_{2}: =  (\nabla\u)\CC + \CC(\nabla\u)^\top + \Phi(\trC)\mathbf{I} -\bigchi(\trC)\CC.
	\eaa
	Using the integrability of a weak solution in Definition \ref{defn:weak_sol_UC}, and applying H\"older's inequality gives
	\bee\label{est-h-ij-1}
	h_{1} \in L^{1}(0,T;L^{\frac{3}{2}}(\Omega))\cap  L^{2}(0,T;L^{1}(\Omega)), \quad h_{2} \in L^{\frac{4}{3}}(\Omega_T).
	\eee
	By interpolation, we derive
	\bee\label{est-h-ij-2}
	h_{1} \in L^{\tilde s}(0,T;L^{\tilde r}(\Omega)), \quad \frac{2}{\tilde s} + \frac{3}{\tilde r} = 4, \quad 1\leq \tilde s \leq 2, \ 1\leq \tilde r \leq \frac{3}{2}.
	\eee
	In particular, choosing $\tilde s = \tilde r$ gives $h_{1} \in L^{\frac{4}{3}}(0,T;L^{\frac{6}{5}}(\Omega))$. Hence, for each $1\leq i,j\leq 3$, the source term in \eqref{eq-C-ij-0} satisfies $h_{1,i,j} + h_{2,i,j} \in  L^{\frac{4}{3}}(0,T;L^{\frac{6}{5}}(\Omega))$. Recall that the initial datum
	$$
	\CC_{0} \in W^{\frac{1}{2}, \frac{4}{3}}_{{\bf n}} (\Omega) \subset W^{\frac{1}{2}, \frac{6}{5}}_{{\bf n}} (\Omega).
	$$
	Thus, applying Lemma \ref{lem-parabolic-1} implies that the unique solution $\CC_{i,j}$ to \eqref{eq-C-ij-0} satisfies
	\baa\label{2nd-time-est-C-1}
	\CC\in L^{\frac{4}{3}}(0,T;W^{2,\frac{6}{5}}(\Omega)), \quad \p_{t} \CC\in L^{\frac{4}{3}}(0,T;L^{\frac{6}{5}}(\Omega)).
	\eaa
	
	\medskip
	
	On the other hand, we can decompose this unique solution $\CC_{i,j} = \CC_{i,j}^{(1)} + \CC_{i,j}^{(2)}$ where $\CC_{i,j}^{(1)}$ and $\CC_{i,j}^{(2)}$ solves
	\baa\label{eq-C-ij-1}
	&\frac{\partial\CC_{i,j}^{(1)}}{\partial t}  -  \varepsilon\Delta\CC_{i,j}^{(1)}= h_{1,i,j} ,  \quad &&\mbox{in} \ \Omega_T,\\
	&\p_{\bf n} \CC_{i,j}^{(1)} = 0 \quad &&\mbox{on} \ (0,T)\times \p\Omega, \\
	& \CC_{i,j}^{(1)}(0,x) = 0\quad &&\mbox{in} \ \Omega
	\eaa
	and
	\baa\label{eq-C-ij-2}
	&\frac{\partial\CC_{i,j}^{(2)} }{\partial t}  -  \varepsilon\Delta\CC_{i,j}^{(2)}= h_{2,i,j} ,  \quad &&\mbox{in} \ \Omega_T,\\
	&\p_{\bf n} \CC_{i,j}^{(2)} = 0 \quad &&\mbox{on} \ (0,T)\times \p\Omega, \\
	& \CC_{i,j}^{(2)}(0,\cdot) = (\CC_{0})_{i,j} \quad &&\mbox{in} \ \Omega,
	\eaa
	respectively.  Using \eqref{est-h-ij-1}, \eqref{est-h-ij-2} and applying Lemma \ref{lem-parabolic-1} to \eqref{eq-C-ij-1}, \eqref{eq-C-ij-2} gives
	\baa\label{est-C-ij-1}
	\CC_{i,j}^{(1)}\in L^{\tilde s}(0,T;W^{2,\tilde r}(\Omega)), \quad \frac{\p \CC_{i,j}^{(1)}}{\p t}\in  L^{\tilde s}(0,T;L^{\tilde r}(\Omega)),
	\eaa
	and
	\baa\label{est-C-ij-2}
	\CC_{i,j}^{(2)}\in L^{\frac{4}{3}}(0,T;W^{2,\frac{4}{3}}(\Omega)), \quad \frac{\p \CC_{i,j}^{(2)}}{\p t}\in  L^{\frac{4}{3}}(\Omega_T),
	\eaa
	where $(\tilde s, \tilde r)$ satisfies \eqref{est-h-ij-2} except the board-line cases $\tilde s = 1, \tilde r = \frac{3}{2}$ and $\tilde s = 2, \tilde r =1$. Consequently, we have completed the proof of the first part of Theorem \ref{theo:cond-energy-eq}.

	\subsection{Conditional energy equality}
	We proceed with the proof of the energy equality \eqref{cond-energy-eq} under additional integrability assumption $\CC \in L^{s}(0,T; L^{r}(\Omega))$ for some $(s,r)$ satisfying \eqref{Serrin-s-r} i.e.
	\baa\label{Serrin-s-r-recall}
	\frac{2}{s} + \frac{3}{r} \leq 1, \quad 2<s<\infty,  \ 3 < r <\infty.
	\eaa
	
	\medskip

	Recall the main result of Section \ref{sec:para-reg}: $\CC = \CC^{(1)} + \CC^{(2)}$ with
	\baa\label{est-C-ij-1-re}
	\CC^{(1)}\in L^{\tilde s}(0,T;W^{2,\tilde r}(\Omega)), \quad \frac{\p \CC^{(1)}}{\p t}\in  L^{\tilde s}(0,T;L^{\tilde r}(\Omega)),
	\eaa
	and
	\baa\label{est-C-ij-2-re}
	\CC^{(2)}\in L^{\frac{4}{3}}(0,T;W^{2,\frac{4}{3}}(\Omega)), \quad \frac{\p \CC^{(2)}}{\p t}\in  L^{\frac{4}{3}}(\Omega_T),
	\eaa
	for all $(\tilde s, \tilde r)$ satisfying
	$$\frac{2}{\tilde s} + \frac{3}{\tilde r} = 4, \quad  1 < \tilde s < 2, \ 1 < \tilde r < \frac{3}{2}.$$
	It is easy to see that the Lebesgue conjugate numbers $(\tilde s', \tilde r')$ satisfy
	$$
	\frac{2}{\tilde s'} + \frac{3}{\tilde r'} = 1, \quad \quad  2 < \tilde s' < \infty,  \ 3 <  \tilde r' < \infty,
	$$
	which is a subcase of \eqref{Serrin-s-r-recall}. Thus,
	$$
	\CC \in L^{4}(\Omega_T) \cap L^{s}(0,T; L^{r}(\Omega)) \subset \big( L^{\frac{4}{3}}(\Omega_T) +  L^{\tilde s}(0,T;L^{\tilde r}(\Omega))\big)'.
	$$
	Together with the energy estimates for $\u$ and $\CC$ in \eqref{defn:weak_sol_UC}, one can verify by a density argument that $\CC$ can be chosen as a test function in \eqref{eq:weak_sol_UC-2}. This gives for each $t\in (0,T]$ that
	\baa\label{eq:weak_sol_UC-2-C}
	& \int_{0}^{t} \int_{\Omega} \CC :  \frac{\p \CC}{\p t} \dx \ddta - \int_{0}^{t} \int_{\Omega} (\u\cdot\nabla)\CC : \CC \dx \ddta - \int_{0}^{t} \int_\Omega \e |\nabla \CC |^{2} \dx \ddta  \\
	& =  \int_{0}^{t}\int_\Omega \bigchi(\trC)\CC:\CC \dx\ddta - \int_{0}^{t} \int_\Omega \Phi(\trC) {\rm tr}\,\CC \dx \ddta  \\
	& \quad +  \int_{\Omega} |\CC (t)|^{2}\dx -   \int_{\Omega} |\CC_{0}|^{2}\dx.
	\eaa
	We next claim that for a.a. $t\in (0,T)$
	\baa\label{C-C-1}
	& \int_{0}^{t} \int_{\Omega} \CC :  \frac{\p \CC}{\p t} \dx \ddta  = \frac{1}{2}\int_{\Omega} |\CC (t)|^{2}\dx  - \frac{1}{2}\int_{\Omega} |\CC_{0}|^{2}\dx, \\
	& \int_{0}^{t} \int_{\Omega} (\u\cdot\nabla)\CC : \CC \dx \ddta = 0.
	\eaa
	
	To prove \eqref{C-C-1}, we employ the classical Friedrichs mollification in the spatial variable.  Let  $\phi_{\de}$ be a standard Friedrichs mollifier in $\R^{d}$ with $ \de\in (0,1)$ small. Define $\CC_{\de}(t, x): = \phi_{\de} \ast \CC (t, x)$. For each $t\in [0,T]$, here we treat $\CC(t,\cdot)$ as a function that is extended in $W^{1,2}_{0}(\Omega_{1})$ where $\Omega_{1}: = \{x\in \R^{d}; {\rm dist}\,(x,\p \Omega) <1\}$. It is easy to verify that for each $0<\delta < 1$, $\CC_{\de}(t, \cdot) \in C_{c}^{\infty}(\Omega_{2})$ with $\Omega_{2}: = \{x\in \R^{d}; {\rm dist}\,(x,\p \Omega) < 2\}$ that
	$$
	\| \p^{\alpha}_{x}\CC_{\de}(t, \cdot)\|_{L^{\infty}(\R^{d})} \leq  c(\alpha, \delta) \|\CC(t, \cdot)\|_{L^{1}(\Omega)}, \quad \text{ for all } \alpha \in \mathbb{N}^{3}.
	$$
	
	\medskip

	We start by showing the second equality in \eqref{C-C-1}. We have
	\baa\label{C-C-1.5}
	\int_{0}^{t} \int_{\Omega} (\u\cdot\nabla)\CC : \CC \dx \ddta & = \int_{0}^{t} \int_{\Omega} (\u\cdot\nabla)\CC : (\CC-\CC_{\delta}) \dx \ddta  + \int_{0}^{t} \int_{\Omega} (\u\cdot\nabla)(\CC-\CC_{\delta}) : \CC_{\delta} \dx \ddta \\
	& +  \int_{0}^{t} \int_{\Omega} (\u\cdot\nabla)\CC_{\delta} : \CC_{\delta} \dx \ddta.
	\eaa
	
	We first consider the last term in \eqref{C-C-1.5}. Since $\CC_{\delta} (t, \cdot)\in C_{c}^{\infty}(\R^{d})$ and $\u(t, \cdot) \in W^{1,2}_{0}(\Omega), \, \div \u =0$ for a.a. $t\in (0,T)$, we obtain
	\baa\label{C-C-2}
	\int_{\Omega} (\u\cdot\nabla)\CC_{\delta} : \CC_{\delta} \dx = \int_{\Omega} (\u\cdot\nabla) \frac{1}{2}|\CC_{\delta}|^{2} \dx = -\int_{\Omega} \div \u \frac{1}{2} |\CC_{\delta}|^{2} \dx = 0.
	\eaa
	
	\medskip
	
	We then consider the second term in \eqref{C-C-1.5}
	\baa\label{C-C-1.6}
	\int_{0}^{t} \int_{\Omega} (\u\cdot\nabla)(\CC-\CC_{\delta}) : \CC_{\delta} \dx \ddta & = \int_{0}^{t} \int_{\Omega} \big((\u\cdot\nabla)\CC -((\u\cdot \nabla)\CC)_{\delta}\big) : \CC_{\delta} \dx\ddta\\
	& + \int_{0}^{t} \int_{\Omega} \big(((\u\cdot \nabla)\CC)_{\delta} - (\u \cdot \nabla) \CC_{\delta}\big) : \CC_{\delta} \dx\ddta .
	\eaa
	In \eqref{est-h-ij-2}  we have shown that
	\bee\label{est-h-ij-2-recall}
	(\u\cdot\nabla)\CC \in L^{\tilde s}(0,T;L^{\tilde r}(\Omega)) \quad \mbox{for all $(\tilde s, \tilde r)$ such that} \ \frac{2}{\tilde s} + \frac{3}{\tilde r} = 4, \ 1\leq \tilde s \leq 2, \ 1\leq \tilde r \leq \frac{3}{2}.
	\eee
	Then for each $(\tilde s, \tilde r)$ satisfying \eqref{est-h-ij-2-recall},  for a.e. $t\in (0,T)$, there holds
	\baa
	& \|(\u\cdot\nabla)\CC(t, \cdot) - ((\u\cdot\nabla)\CC)_{\de} (t, \cdot)\|_{\tilde r}\to 0, \ \ \mbox{as $\de \to 0$},\\
	& \|(\u\cdot\nabla)\CC(t, \cdot) - ((\u\cdot\nabla)\CC)_{\de} (t, \cdot)\|_{\tilde r}\leq 2 \|(\u\cdot\nabla)\CC(t, \cdot) \|_{\tilde r} \in L^{\tilde s}(0,T).
	\nn
	\eaa
	Lebesgue's dominated convergence theorem implies that
	\baa\label{C-de-cov1}
	\|(\u\cdot\nabla)\CC - ((\u\cdot\nabla)\CC)_{\de}\|_{L^{\tilde s}(L^{\tilde r})} \to 0,  \quad \mbox{as $\de \to 0$}.
	\eaa

	We then obtain
	\baa \label{C-C-1.7}
	&\int_{0}^{t} \int_{\Omega} \big((\u\cdot\nabla)\CC -((\u\cdot \nabla)\CC)_{\delta}\big) : \CC_{\delta} \dx\ddta \\
	&\leq  \| (\u\cdot\nabla)\CC - ((\u\cdot\nabla)\CC)_{\de}\|_{L^{s'}(L^{r'})} \|\CC_{\delta}\|_{L^{s}(L^{r})}  \\
	& \leq c \|(\u\cdot\nabla)\CC - ((\u\cdot\nabla)\CC)_{\de}\|_{L^{\tilde s}(L^{\tilde r})}  \|\CC\|_{L^{s}(L^{r})},
	\eaa
	which converges to $0$ as $\de\to 0$. Here we have used the fact that there exists $(\tilde s, \tilde r)$ satisfying  \eqref{est-h-ij-2-recall} such that
	$$
	s'\leq \tilde s, \quad r' \leq \tilde r.
	$$
	
	\medskip
	
	To show the convergence of  the second term in \eqref{C-C-1.6}, we introduce the Friedrichs' commutator lemma, see, e.g.,  Lemma 6.7 in \cite{Novotny.2004}.
	\begin{Lemma}\label{lem-Fre-comm} Suppose that $d \geq 2$. Let $1 < q, \beta < \infty$ and $\frac{1}{q} + \frac{1}{\beta} \leq 1.$ Let $1 \leq  \alpha \leq  \infty$ and $\frac{1}{\alpha} + \frac{1}{p} \leq 1$. Suppose that
		$$
		\rho \in L^{\alpha}(0,T; L_{loc}^{\beta}(\R^{d})), \quad u \in L^{p}(0,T; W^{1,q}_{loc}(\R^{d})).
		$$
		Then
		$$
		\| (u \cdot \nabla \rho)_{\de} - (u \cdot \nabla \rho_{\de })\|_{L^{s}(L^{r}_{loc} (\R^{d}))} \to 0  \ \ \mbox{as $\de\to 0$}
		$$
		with
		$$
		\frac{1}{s} = \frac{1}{\alpha} + \frac{1}{p} , \quad \frac{1}{r} = \frac{1}{q} + \frac{1}{\beta}.
		$$
		Moreover, $u \cdot \nabla \rho  = \div{\rho u} - \rho \div u$.
	\end{Lemma}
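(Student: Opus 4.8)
\emph{The statement is the classical DiPerna--Lions commutator estimate}, and the plan is to isolate the commutator into a single genuinely non-trivial term, prove its convergence first under a $\de$-uniform bound, and then remove the smoothness by density. As a preliminary I would check that $u\cdot\na\rho$ is well defined as a distribution via the stated formula $u\cdot\na\rho=\di{\rho u}-\rho\,\di u$: under the hypotheses $\rho\in L^{\alpha}(0,T;L^{\beta}_{loc})$, $u\in L^{p}(0,T;W^{1,q}_{loc})$ with $\frac1q+\frac1\beta\le 1$ and $\frac1\alpha+\frac1p\le 1$, H\"older's inequality places $\rho u$ and $\rho\,\di u$ in $L^{s}(0,T;L^{r}_{loc}(\R^{d}))$ with $\frac1s=\frac1\alpha+\frac1p$, $\frac1r=\frac1q+\frac1\beta$, so the right-hand side makes sense and the product-rule identity follows by smooth approximation. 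In particular, the Friedrichs mollification $\phi_{\de}\ast(\cdot)$ commutes with the distributional divergence.

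The main step is an algebraic reduction. Mollifying the identity, using that $\di$ commutes with $\phi_{\de}\ast(\cdot)$, and applying the product rule to the smooth $\rho_{\de}:=\phi_{\de}\ast\rho$ against $u$, one gets
\[
(u\cdot\na\rho)_{\de}-u\cdot\na\rho_{\de}=\di{(\rho u)_{\de}-\rho_{\de}u}+\rho_{\de}\,\di u-(\rho\,\di u)_{\de}.
\]
Writing $(\rho u)_{\de}(x)-\rho_{\de}(x)u(x)=\int_{\R^{d}}\phi_{\de}(x-y)\rho(y)(u(y)-u(x))\,\mathrm{d}y$ and taking $\di_{x}$, the two $\rho_{\de}\,\di u$ contributions cancel and one is left with
\[
(u\cdot\na\rho)_{\de}-u\cdot\na\rho_{\de}=-R_{\de}-(\rho\,\di u)_{\de},\qquad R_{\de}(x):=\int_{\R^{d}}(\na\phi_{\de})(x-y)\cdot\big(u(x)-u(y)\big)\rho(y)\,\mathrm{d}y.
\]
Since $(\rho\,\di u)_{\de}\to\rho\,\di u$ in $L^{s}(0,T;L^{r}_{loc})$ by standard mollifier theory, it suffices to prove $R_{\de}\to-\rho\,\di u$ in the same space; the commutator then tends to $\rho\,\di u-\rho\,\di u=0$, and combining with the identity $u\cdot\na\rho=\di{\rho u}-\rho\,\di u$ finishes the lemma.

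For the convergence of $R_{\de}$ I would first establish a bound uniform in $\de\in(0,1)$. After the change of variables $y=x-\de z$ and the exact Taylor formula $\tfrac1\de(u(x)-u(x-\de z))=\int_{0}^{1}\na u(x-(1-\theta)\de z)\,z\,\mathrm{d}\theta$, one has $R_{\de}(x)=\int(\na\phi)(z)\cdot\big(\int_{0}^{1}\na u(x-(1-\theta)\de z)z\,\mathrm{d}\theta\big)\rho(x-\de z)\,\mathrm{d}z$, so the generalized Young / Minkowski inequality gives, for any compact $K\subset\R^{d}$ and a slightly larger compact $K'$,
\[
\|R_{\de}(t,\cdot)\|_{L^{r}(K)}\le C_{K,\phi}\,\|\na u(t,\cdot)\|_{L^{q}(K')}\,\|\rho(t,\cdot)\|_{L^{\beta}(K')},\qquad \|R_{\de}\|_{L^{s}(0,T;L^{r}(K))}\le C,
\]
and the same bound holds for $\rho\,\di u$. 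When $u,\rho$ are smooth, $\rho(x-\de z)\to\rho(x)$ and $\int_{0}^{1}\na u(x-(1-\theta)\de z)z\,\mathrm{d}\theta\to\na u(x)z$ locally uniformly, so $R_{\de}(x)\to\rho(x)\int(\na\phi)(z)\cdot(\na u(x)z)\,\mathrm{d}z$; integrating by parts in $z$ with $\int\phi=1$ yields $\int\partial_{i}\phi(z)z_{j}\,\mathrm{d}z=-\delta_{ij}$ (Kronecker symbol), hence the limit is $-\rho(x)\,\di u(x)$, and for smooth data this convergence is locally uniform, hence holds in $L^{s}(0,T;L^{r}(K))$. Finally I would pass to general $\rho,u$ by density: choosing smooth $u_{n}\to u$ in $L^{p}_{loc}(0,T;W^{1,q}_{loc})$ and $\rho_{n}\to\rho$ in $L^{\alpha}_{loc}(0,T;L^{\beta}_{loc})$ and using bilinearity of $(\rho,u)\mapsto R_{\de}[\rho,u]$, I split
\[
R_{\de}[\rho,u]+\rho\,\di u=R_{\de}[\rho-\rho_{n},u]+R_{\de}[\rho_{n},u-u_{n}]+\big(R_{\de}[\rho_{n},u_{n}]+\rho_{n}\,\di u_{n}\big)-\big(\rho_{n}\,\di u_{n}-\rho\,\di u\big),
\]
bound the first, second and fourth terms uniformly in $\de$ by the Young-type estimate and the convergence $\rho_{n}\to\rho$, $u_{n}\to u$, and control the third by the smooth case.

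I expect the main obstacle to be precisely the $\de$-uniform estimate on $R_{\de}$ together with the bookkeeping of local supports --- each mollification inflates supports by $O(\de)$, so the compact sets in the Young-type bound must be enlarged accordingly --- since this is what makes the three-term density splitting close. By contrast, the identification of the limit through rescaling, the first-order Taylor expansion and the symmetry $\int\partial_{i}\phi\,z_{j}=-\delta_{ij}$ is a short computation once the smooth case is arranged, and the borderline exponents (e.g.\ $s=\infty$) are handled by keeping the estimate pointwise in $t$.
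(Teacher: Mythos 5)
The paper offers no proof of this lemma at all: it is quoted verbatim as Lemma~6.7 of the cited monograph of Novotn\'y and Stra\v{s}kraba and used as a black box, so there is nothing in the paper to compare against. Your argument is the standard Friedrichs/DiPerna--Lions commutator proof, and it is sound. The algebraic reduction is right: taking $\di{\cdot}$ of $\int\phi_{\de}(x-y)\rho(y)(u(y)-u(x))\,\mathrm{d}y$ produces exactly the extra $-\rho_{\de}\,\di u$ that cancels, leaving $-R_{\de}-(\rho\,\di u)_{\de}$; the rescaling $y=x-\de z$ plus the Taylor formula gives the $\de$-uniform bound $\|R_{\de}(t,\cdot)\|_{L^{r}(K)}\le C\|\na u(t,\cdot)\|_{L^{q}(K')}\|\rho(t,\cdot)\|_{L^{\beta}(K')}$ with $\tfrac1r=\tfrac1q+\tfrac1\beta$ via Minkowski and H\"older; the moment identity $\int \p_{i}\phi(z)\,z_{j}\,\mathrm{d}z=-\delta_{ij}$ correctly identifies the limit $-\rho\,\di u$ for smooth data; and the bilinear three-term splitting closes the density argument. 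The one place where your write-up is too quick is the time integration at the borderline exponent: your scheme yields, for a.e.\ $t$, convergence in $L^{r}_{loc}$ dominated by an $L^{s}(0,T)$ majorant, which upgrades to convergence in $L^{s}(0,T;L^{r}_{loc})$ by dominated convergence only when $s<\infty$; if $\alpha=p=\infty$ (so $s=\infty$), ``keeping the estimate pointwise in $t$'' gives a uniform bound but not uniform-in-$t$ convergence, so that endpoint should be excluded or treated separately. This caveat is immaterial for the paper's application, where $(s,r)=(\tfrac43,\tfrac43)$.
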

	
	Since $\u \in L^{2}(0,T; W^{1,2}(\Omega))$ and $\CC \in L^{4}(\Omega_T)$,  Lemma \ref{lem-Fre-comm} yields
	$$
	\|((\u\cdot \nabla)\CC)_{\delta} - (\u \cdot \nabla) \CC_{\delta}\|_{L^{\frac{4}{3}}(L^{\frac{4}{3}})} \to 0  \ \ \mbox{as $\de\to 0$}.
	$$
	Together with the uniform boundedness
	$$
	\| \CC_{\de} \|_{L^{4}(L^{4})}  = \left\|   \| \CC_{\de} \|_4 \right\|_{L^{4}(0,T)} \leq  \left\|   \| \CC \|_{4} \right\|_{L^{4}(0,T)} \leq \| \CC \|_{L^{4}(L^{4})} \leq  c,
	$$
	we deduce
	\baa\label{C-C-3}
	\int_{0}^{t} \int_{\Omega} \big(((\u\cdot \nabla)\CC)_{\delta} - (\u \cdot \nabla) \CC_{\delta}\big) : \CC_{\delta} \dx\ddta \to 0.
	\eaa

	\medskip

	For a.e. $t\in (0,T)$ there holds
	\baa
	& \| \CC(t, \cdot) - \CC_{\de} (t, \cdot)\|_r\to 0 \ \ \mbox{as $\de \to 0$},\\
	& \| \CC(t, \cdot) - \CC_{\de} (t, \cdot)\|_r\leq 2 \| \CC(t, \cdot) \|_r \in L^{s}(0,T).
	\eaa
	Lebesgue's dominated convergence theorem implies that
	\baa\label{C-de-cov2}
	\|\CC - \CC_{\de} \|_{L^{s}(L^{r})} \to 0  \quad \mbox{as $\de \to 0$}.
	\eaa
	Together with the estimates $(\u \cdot \nabla) \CC \in {L^{s'}(0,T;L^{r'}(\Omega)}$, applying H\"older's inequality implies
	\baa\label{C-C-4}
	\int_{0}^{t} \int_{\Omega} (\u\cdot\nabla)\CC : (\CC-\CC_{\delta}) \dx \ddta  \to 0  \quad \mbox{as $\de \to 0$}.
	\eaa
	
	\medskip
	
	By \eqref{C-C-1.5}, \eqref{C-C-2}, \eqref{C-C-1.6}, \eqref{C-C-1.7}, \eqref{C-C-3} and \eqref{C-C-4}, we obtain $\eqref{C-C-1}_{2}$.

	\medskip
	
	Now we consider the first equality in \eqref{C-C-1}.  Similarly, as \eqref{C-de-cov1} and \eqref{C-de-cov2}  we can show
	\baa\label{C-de-cov3}
	\|\CC - \CC_{\de} \|_{L^{4}(L^4)} \to 0, \quad  \norm*{ \frac{\p \CC}{\p t} - \frac{\p \CC_{\de}}{\p t}  }_{L^{\frac{4}{3}}(L^{\frac{4}{3}}) + L^{s'}(L^{r'})}\to 0.
	\eaa
	This together with \eqref{C-de-cov2} implies
	\baa\label{C-de-0}
	\int_{0}^{t} \int_{\Omega} \CC :  \frac{\p \CC}{\p t} \dx \ddta = \lim_{\de\to 0}  \int_{0}^{t} \int_{\Omega} \CC_{\de} :  \frac{\p \CC_{\de}}{\p t} \dx \ddta.
	\eaa
	A standard density argument with respect to the time variable implies
	\baa\label{t-deri-C}
	\frac{\p |\CC_{\de}|^{2}}{\p t}  = 2  \CC_{\de} :  \frac{\p \CC_{\de}}{\p t},
	\eaa
	where $\frac{\p |\CC_{\de}|^{2}}{\p t} $ stands for the weak time derivative of $|\CC_{\de}|^{2}$.   Define
	$$
	F_{\de} (t) := \frac 12 \int_{\Omega}   |\CC_{\de}(t) |^{2} \dx.
	$$
	We have the following lemma for $F_{\de}$:
	\begin{Lemma}\label{lem:F-de-deriva} The function $F_{\de} \in L^{\infty}(0,T)$ and $F_{\de}$ admits a weak derivative $F_{\de}' \in L^{\frac{4}{3}}(0,T)$. Moreover,
		$$
		F_{\de}'(t) = \frac 12 \int_{\Omega}\frac{\p |\CC_{\de}|^{2}}{\p t} \dx =  \int_{\Omega} \CC_{\de} :  \frac{\p \CC_{\de}}{\p t}\dx.
		$$
	\end{Lemma}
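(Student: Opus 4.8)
The plan is to reduce the statement to the elementary calculus identity \eqref{t-deri-C} for $|\CC_\de|^2$, which is already available, combined with the parabolic-regularity information on $\p_t\CC$ obtained in Section \ref{sec:para-reg}. For the $L^\infty$-bound, recall $\CC\in C_w([0,T];L^2(\Omega))\subset L^\infty(0,T;L^2(\Omega))$; since the spatial mollification (preceded by the fixed bounded extension into $W^{1,2}_0(\Omega_1)$) does not increase the $L^2$-norm beyond a constant (Young's convolution inequality), we get $\|\CC_\de(t,\cdot)\|_{L^2(\R^d)}\le c\,\|\CC(t,\cdot)\|_{L^2(\Omega)}$ uniformly in $t$ and $\de$, hence $F_\de=\tfrac12\|\CC_\de(\cdot)\|_{L^2(\Omega)}^2\in L^\infty(0,T)$.

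Next I would pin down the time regularity of $\CC_\de$. From the decomposition $\CC=\CC^{(1)}+\CC^{(2)}$ of Section \ref{sec:para-reg}, choosing the admissible pair $\tilde s=\tfrac43$, $\tilde r=\tfrac65$ (for which $\tfrac2{\tilde s}+\tfrac3{\tilde r}=4$) in \eqref{est-C-ij-1-re} and using $L^{4/3}(\Omega_T)\hookrightarrow L^{4/3}(0,T;L^{6/5}(\Omega))$ on the bounded domain, one obtains $\p_t\CC\in L^{4/3}(0,T;L^{6/5}(\Omega))$; together with $\CC\in L^\infty(0,T;L^2(\Omega))\subset L^{4/3}(0,T;L^{6/5}(\Omega))$ this gives $\CC\in W^{1,4/3}(0,T;L^{6/5}(\Omega))$. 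Since the mollification is a bounded linear operator commuting with $\p_t$, also $\CC_\de\in W^{1,4/3}(0,T;L^{6/5}(\R^d))$ with $\p_t\CC_\de=(\p_t\CC)_\de$, and $\|\p^\alpha_x\CC_\de(t,\cdot)\|_{L^\infty(\R^d)}\le c(\alpha,\de)\|\CC(t,\cdot)\|_{L^1(\Omega)}\in L^\infty(0,T)$. Hölder's inequality then yields
\[
\Big|\int_\Omega\CC_\de:\p_t\CC_\de\,\dx\Big|\le\|\CC_\de(t)\|_{L^\infty(\Omega)}\,|\Omega|^{1/6}\,\|\p_t\CC_\de(t)\|_{L^{6/5}(\R^d)}\in L^{4/3}(0,T),
\]
so the candidate derivative has the required integrability; the same estimate shows $\p_t|\CC_\de|^2=2\,\CC_\de:\p_t\CC_\de\in L^{4/3}(0,T;L^1(\Omega))$.

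Finally, to identify $F_\de'$: by \eqref{t-deri-C} the weak time derivative of $|\CC_\de|^2$ on $\Omega_T$ equals $2\,\CC_\de:\p_t\CC_\de$, and since $|\CC_\de|^2\in L^\infty(0,T;L^1(\Omega))$ with $\p_t|\CC_\de|^2\in L^{4/3}(0,T;L^1(\Omega))$ we have $|\CC_\de|^2\in W^{1,4/3}(0,T;L^1(\Omega))$. Composing with the bounded linear functional $f\mapsto\int_\Omega f\,\dx$ on $L^1(\Omega)$ shows $2F_\de\in W^{1,4/3}(0,T)$ with $(2F_\de)'=\int_\Omega\p_t|\CC_\de|^2\,\dx=\int_\Omega2\,\CC_\de:\p_t\CC_\de\,\dx$, which is exactly the assertion. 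The only genuinely delicate step is the commutation of the spatial mollification with the distributional time derivative — the passage from $\CC\in W^{1,4/3}(0,T;L^{6/5}(\Omega))$ to $\CC_\de\in W^{1,4/3}(0,T;L^{6/5}(\R^d))$ with $\p_t\CC_\de=(\p_t\CC)_\de$ — and this rests precisely on the parabolic-regularity estimate $\p_t\CC\in L^{4/3}(0,T;L^{6/5}(\Omega))$ from Section \ref{sec:para-reg}; once it is granted, everything else reduces to Young's and Hölder's inequalities and the composition rule for Bochner–Sobolev functions.
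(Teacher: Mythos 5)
Your proposal is correct, and the preliminary steps (the $L^\infty$ bound on $F_\de$ via Young's convolution inequality, the choice $\tilde s=\tfrac43$, $\tilde r=\tfrac65$ giving $\p_t\CC\in L^{4/3}(0,T;L^{6/5}(\Omega))$, the commutation $\p_t\CC_\de=(\p_t\CC)_\de$, and the H\"older bound showing $\int_\Omega\CC_\de:\p_t\CC_\de\dx\in L^{4/3}(0,T)$) match what the paper uses. Where you genuinely diverge is in the identification of $F_\de'$: the paper never passes through the Bochner space $W^{1,4/3}(0,T;L^1(\Omega))$; instead it tests the pointwise identity \eqref{t-deri-C} against $\psi(t)\phi_n(x)$ with an explicit spatial cutoff $\phi_n\in C_c^\infty(\Omega)$, $\phi_n\to 1$, and passes to the limit $n\to\infty$ using $\CC_\de\in L^\infty(0,T;L^\infty)$ and $\p_t\CC_\de\in L^{4/3}(0,T;L^\infty)$ (estimates \eqref{F-de-1}--\eqref{F-de-4}). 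Your route — upgrade $|\CC_\de|^2$ to $W^{1,4/3}(0,T;L^1(\Omega))$ and compose with the bounded functional $f\mapsto\int_\Omega f\dx$ — is cleaner in presentation, but be aware that the step you treat as immediate, namely that the distributional time derivative of $|\CC_\de|^2$ on $\Omega_T$ (tested only against $C_c^\infty(\Omega_T)$) yields the Bochner weak derivative in $L^1(\Omega)$ (tested against $\psi(t)\varphi(x)$ with $\varphi\in L^\infty(\Omega)$, in particular $\varphi\equiv 1$, which does not vanish near $\p\Omega$), is precisely the content of the paper's $\phi_n$ argument. It does hold here — approximate $\varphi$ by compactly supported smooth functions and use dominated convergence, which is legitimate because $|\CC_\de|^2$ and $\CC_\de:\p_t\CC_\de$ both lie in $L^1(\Omega_T)$ by the bounds you established — but it should be stated rather than absorbed into ``the composition rule for Bochner--Sobolev functions''; the composition rule only applies once membership in $W^{1,4/3}(0,T;L^1(\Omega))$ is actually secured. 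Your flagged ``delicate step'' (commuting mollification with $\p_t$) is, by contrast, routine at the distributional level; the real work sits in the cutoff/approximation you elided.
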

	
	\begin{proof}[Proof of Lemma \ref{lem:F-de-deriva}]  Let $\phi_{n}(x)\in C_{c}^{\infty}(\Omega)$ such that $$
		\phi_{n} = 1 \ \mbox{on} \ \left\{x\in \Omega; {\rm dist} (x, \p \Omega) > \frac{1}{n}\right\}.$$  Clearly
		$$
		\phi_{n} \to 1 \ \mbox{in} \ L^{r}(\Omega), \ \mbox{for all $1\leq r <\infty$}.
		$$
		Given $\psi\in C^{\infty}_{c}(0,T)$ we compute
		\baa\label{F-de-1}
		\int_{0}^{T} F_{\de} (t)  \psi'(t) \dt  & = \frac 12 \int_{0}^{T} \int_{\Omega}   |\CC_{\de}(t)|^{2} \dx \, \psi'(t) \dt\\
		& =  \frac 12 \int_{0}^{T} \int_{\Omega}   |\CC_{\de}(t)|^{2} \psi'(t) \phi_{n}(x)\dx\dt +  \frac 12 \int_{0}^{T} \int_{\Omega}   |\CC_{\de}(t)|^{2} \psi'(t) (1-\phi_{n}(x))\dx\dt.
		\eaa
		Direct calculation gives
		\baa\label{F-de-2}
		&\frac 12 \int_{0}^{T} \int_{\Omega}   |\CC_{\de}(t)|^{2} |\psi'(t) | |(1-\phi_{n}(x))| \dx\dt\\
		& \leq \|\CC_{\de}\|_{L^{4}(L^4)}^{2} \|\psi'\|_{L^{\infty}(0,T)} \|1-\phi_{n}\|_{2} \to 0, \quad \mbox{as $n\to \infty.$}
		\eaa
		By \eqref{t-deri-C} we can deduce
		\baa\label{F-de-3}
		\frac 12 \int_{0}^{T} \int_{\Omega}   |\CC_{\de}(t)|^{2} \psi'(t) \phi_{n}(x)\dx\dt & =  -\frac 12 \int_{0}^{T} \int_{\Omega}   \frac{\p |\CC_{\de}|^{2}}{\p t}  \psi(t) \phi_{n}(x)\dx\dt \\
		& = - \int_{0}^{T} \int_{\Omega}  \CC_{\de} :  \frac{\p \CC_{\de}}{\p t} \psi(t) \phi_{n}(x)\dx\dt \\
		& = - \int_{0}^{T} \int_{\Omega}  \CC_{\de} :  \frac{\p \CC_{\de}}{\p t} \psi(t) \dx\dt \\
		& \quad - \int_{0}^{T} \int_{\Omega}  \CC_{\de} :  \frac{\p \CC_{\de}}{\p t} \psi(t) (\phi_{n}(x)-1)\dx\dt.
		\eaa
		Since $\CC \in L^{\infty}(0,T;L^{2}(\Omega))$ and $\frac{\partial \CC}{\partial t} \in L^{\frac{4}{3}}(\Omega_T) + L^{2}(0,T;L^{1}(\Omega))$, we obtain
		$$
		\CC_{\de} \in L^{\infty}(0,T; C^{\infty}_{c}(\R^{d})), \quad \frac{\partial \CC_{\de}}{\partial t} \in L^{\frac{4}{3}}(0,T;C^{\infty}_{c}(\R^{d})) .
		$$

		Thus,
		\baa\label{F-de-4}
		&\int_{0}^{T} \int_{\Omega} | \CC_{\de} |  \left| \frac{\p \CC_{\de}}{\p t} \right|  |\psi(t)| | (\phi_{n}(x)-1)|\dx\dt \\
		&\leq  \|\CC_{\de}\|_{L^{\infty}(\Omega_T)} \left \| \frac{\p \CC_{\de}}{\p t} \right \|_{L^{\frac{4}{3}}(L^{\frac{4}{3}})}  \|\psi\|_{L^{\infty}(0,T)}  \|\phi_{n} - 1\|_4 \to 0, \quad \mbox{as $n\to \infty.$}
		\eaa
		
		Applying \eqref{F-de-1}--\eqref{F-de-4} implies
		\baa\label{F-de-5}
		\int_{0}^{T} F_{\de} (t)  \psi'(t) \dt   = - \int_{0}^{T} \left(\int_{\Omega}  \CC_{\de} :  \frac{\p \CC_{\de}}{\p t} \dx \right)\psi(t)  \dt.
		\eaa
		Together with the fact $\CC_{\de} \in L^{\infty}(0,T; C^{\infty}_{c}(\R^{d}))$ and $\frac{\p \CC_{\de}}{\p t}  \in L^{\frac{4}{3}}(0,T; C^{\infty}_{c}(\R^{d}))$, we finally deduce
		\baa\label{F-de-6}
		F_{\de} '(t)    =  \int_{\Omega}  \CC_{\de} :  \frac{\p \CC_{\de}}{\p t} \dx  \in L^{\frac{4}{3}}(0,T).
		\eaa
		
	\end{proof}

	Lemma \ref{lem:F-de-deriva} implies that $F_{\de} \in W^{1,\frac{4}{3}}(0,T)$. The Sobolev embedding implies that $F_{\de}\in C^{0,\frac{1}{4}}([0,T])$ is H\"older continuous.  Consequently, it is rather easy to verify by density argument that for each $t\in [0,T]$ there holds
	\baa\label{F-t-0-1}
	\int_{0}^{t} F_{\de}'(\tau)\ddta = F_{\de}(t) - F_{\de}(0).
	\eaa
	Since $\CC\in C_{w}([0,T];L^{2}(\Omega))$ then for a.e. $t\in (0,T)$, we have
	\baa\label{F-t-0-2}
	F_{\de}(0) = \frac 12 \int_{\Omega}   |\CC_{0,\de}|^{2} \dx \to \frac 12 \int_{\Omega}   |\CC_{0}|^{2} \dx, \quad F_{\de}(t) = \frac 12 \int_{\Omega}   |\CC_{\de}(t)|^{2} \dx \to \frac 12 \int_{\Omega}   |\CC(t)|^{2} \dx.
	\eaa
	Hence, by \eqref{C-de-0}, \eqref{t-deri-C}, Lemma \ref{lem:F-de-deriva}, \eqref{F-t-0-1} and  \eqref{F-t-0-2}, we finally deduce $\eqref{C-C-1}_{1}$. We  thus finished the proof of \eqref{C-C-1}. As a consequence, together with \eqref{eq:weak_sol_UC-2-C}, we obtain the equality \eqref{cond-energy-eq} and complete the proof of Theorem \ref{theo:cond-energy-eq}.

	\section{Relative Energy}
	
	\subsection{Expansion of the relative energy}
	The goal of this section is to expand the relative energy (\ref{eq:rel_en_full}) by using the corresponding energy inequalities and inserting more regular weak solutions as  suitable test functions.
	We recall that (\ref{eq:uc_energy}) can be written in the following abstract form
	\begin{align}
	E(\u,\CC)(0) &+ \int_0^t D(\u,\CC) \ddta \leq E(\u,\CC)(0) + \int_0^t F(\CC) \ddta, \label{eq:absenergy} \\
	D(\u,\CC) &= \int_\Omega \eta\snorm*{\Du}^2 + \frac{\varepsilon}{2}\snorm*{\na\trr{\CC-\HH}}^2 + \frac{1}{2}\bigchi(\trC)\trC^2 \dx, \\
	\mathcal{F}(\CC) &= \int_\Omega\frac{1}{2}\Phi(\trC)\trC \dx. \label{eq:absenergy2}
	\end{align}
	
	To this end we expand the relative energy (\ref{eq:rel_en_full}) as follows
	\begin{align*}
	\mathcal{E}(t) &= E(\u,\CC)(t) + E(\U,\HH)(t) - R(\u,\CC\vert\U,\HH)(t),\\
	\mathcal{E}(t)-\mathcal{E}(0)	&= -\int_0^t D(\u,\CC) - F(\CC) + D(\U,\HH)- F(\HH) \ddta  - \int_0^t \frac{\partial}{\partial t}R(\u,\CC\vert\U,\HH) \ddta.
	\end{align*}
	Here we used the energy inequality/equality, respectively and the reminder is defined by
	\begin{equation*}
	R(\u,\CC\vert\U,\HH) = \int_\Omega \u\cdot\U + \frac{1}{2}\trC\trH  - \frac{1}{2}\snorm*{\CC-\HH}^2 \dx.
	\end{equation*}
	
	Next, we will expand the time derivative of the reminder
	\begin{align}
	- \int_0^t \frac{\partial}{\partial t}R(\u,\CC\vert\U,\HH) \ddta =& \overbrace{ -\iQt   \U\cdot\frac{\partial }{\partial t}\u + \u\cdot\frac{\partial }{\partial t}\U \dx\ddta}^{I_1} \\
	& \overbrace{-\frac{1}{2}\iQt \tr{\frac{\partial }{\partial t}\CC}\trH + \trC\tr{\frac{\partial }{\partial t}\HH}\dx\ddta}^{I_2} \label{eq:rel_ex_exp1} \\
	&  \underbrace{+\frac{1}{2}\iQt \frac{\partial }{\partial t}\snorm*{\CC-\HH}^2 \dx\ddta}_{I_3}. \label{eq:rel_ex_exp2}
	\end{align}
	
	First we focus on the terms $I_1$ and $I_2$. The term $I_3$ will be treated separately. In what follows we will apply the Hölder, the Young and interpolation inequalities repeatedly.
	
	For the term $I_1$ we insert $\vv=\U$ in weak formulation \eqref{eq:weak_sol_UC-1} for $(u,\CC)$ and $\vv=\u$ into a weak formulation for $(\U,\CC)$. Note that by density and the assumptions for this theorem, these are valid test functions,  and we obtain
	\begin{align}
	I_1 =  \iQt &(\u\cdot\na)\u\cdot\U + (\U\cdot\na)\U\cdot\u + 2\eta\Du:\mathrm{D}\U \nonumber \\
	&+ \trC\CC:\na\U + \trH\HH:\na\u \dx\ddta.\label{eq:I1}
	\end{align}
	
	We start with the viscosity term by considering $I_1$ and the velocity gradient terms from $D(\u,\CC)$, $D(\U,\HH)$:
	\begin{align}
	P_1:=\iQt  -\eta\snorm*{\Du}^2 - \eta\snorm*{\DU}^2 +  2\eta\Du:\mathrm{D}\U \dx\ddta= -\iQt \eta \snorm*{\Du-\DU}^2 \dx\ddta. \label{eq:P1}
	\end{align}
	
	Next we deal with the convective terms. Applying integration by parts and adding the zero $-((\u-\U)\cdot\na)\U\cdot\U$ in the third line below yields
	\begin{align}
	 P_2:=&\iQt (\u\cdot\na)\u\cdot\U + (\U\cdot\na)\U\cdot\u \dx\ddta = \iQt (\u\cdot\na)\u\cdot\U - (\U\cdot\na)\u\cdot\U \dx\ddta \nonumber  \\
	 =& \iQt((\u-\U)\cdot\na)\u\cdot\U - ((\u-\U)\cdot\na)\U\cdot\U \dx\ddta\nonumber\\
	 =& \iQt((\u-\U)\cdot\na)(\u-\U)\cdot\U \dx\ddta\nonumber\\
	 \leq &\,c\int_0^t \norm*{\u-\U}_4\norm*{\na\u-\na\U}_2\norm*{\U}_4 \ddta  \nonumber\\
	\leq  &\,\delta \int_0^t \norm*{\Du-\DU}_2^2  + c(\delta)\int_0^t  \norm*{\U}_4^8 \mathcal{E}_1(\u,\CC\vert\U,\HH)(\dta)  \ddta.\label{eq:P2}
	\end{align}
	
	Note that we have used the regularity $\U\in L^8(0,T;L^4(\Omega))$. Here and hereafter $\delta, c(\delta)>0$ are suitable constants following from Young's inequality.
	The rest terms
	\begin{equation}
	\iQt  \trC\CC:\na\U + \trH\HH:\na\u \dx\ddta\label{eq:uc_ucmpart}
	\end{equation}
	will be treated together with the trace part of the relative energy later.
	
	For the term $I_2$ we insert $\DD=\CC$ in weak formulation \eqref{eq:weak_sol_UC-2} for $(u,\CC)$ and $\DD=\HH$ into a weak formulation for $(\U,\CC)$. Note that by density and the assumptions for this theorem, these are valid test functions, and we find
	\begin{align}
	I_2 =& \iQt \frac{1}{2}\u\cdot\na\trC\trH + \frac{1}{2}\U\cdot\na\trH\trC - \trr{\CC\na\u}\trH + \trr{\HH\na\U}\trC \nonumber\\
	&+ \frac{1}{2}\bigchi(\trC)\trC\trH +\frac{1}{2} \bigchi(\trH)\trC\trH -\frac{1}{2} \Phi(\trC)\trH \nonumber \\
	&- \frac{1}{2}\Phi(\trH)\trC+ \varepsilon\na\trC\cdot\na\trH \dx\ddta. \label{eq:I2}
	\end{align}

	First we consider the last term of $I_2$ and the tensor gradient term from $D(\u,\CC), D(\U,\HH)$ to obtain
	\begin{equation}
	P_3:=-\frac{\varepsilon}{2}\iQt \snorm*{\na\trC}^2 + \snorm*{\na\trH}^2 - 2 \na\trC\cdot\na\trH \dx\ddta= -\frac{\varepsilon}{2}\iQt \snorm*{\na\trr{\CC-\HH}}^2 \dx\ddta. \label{eq:P3}
	\end{equation}

We continue with the convective terms
	\begin{align}
	P_4:=& \frac{1}{2}\iQt \u\cdot\na\trC\trH + \U\cdot\na\trH\trC \dx\ddta= \frac{1}{2}\iQt (\U-\u)\cdot\na\trH\trC \dx\ddta\nonumber\\
	= &\frac{1}{2}\iQt (\U-\u)\cdot\na\trH\trr{\CC-\HH} \dx\ddta= -\frac{1}{2}\iQt (\U-\u)\cdot\na\trr{\CC-\HH}\trH \dx\ddta\nonumber\\
	\leq& \delta\norm*{\na\trr{\CC-\HH}}_{L^2(L^2)}^2 +  \delta\norm*{\na\u-\na\U}_{L^2(L^2)}^2 + c(\delta)\int_0^t \norm*{\trH}_4^8\mathcal{E}_1(\u,\CC\vert\U,\HH)(\dta)\ddta. \label{P4}
	\end{align}
	Here we have used the regularity $\trH\in L^8(0,T;L^4(\Omega))$.
	Next, the relaxation terms and the corresponding terms from $D(\u,\CC), D(\U,\HH)$ yield
	\begin{align}
	P_5:=&\frac{1}{2} \iQt -\bigchi(\trC)\trC^2 -\bigchi(\trH)\trH^2 +\bigchi(\trC)\trC\trH +\bigchi(\trH)\trC\trH \dx\ddta\nonumber \\
	= &\frac{1}{2}\iQt -\bigchi(\trC)\trr{\CC-\HH}^2  - (\bigchi(\trC)-\bigchi(\trH))\trH\trr{\CC-\HH}\dx\ddta\nonumber \\
	= & \frac{1}{2}\iQt -\bigchi(\trC)\trr{\CC-\HH}^2 - \iQt \bigchi^\prime(\zeta)\trH\trr{\CC-\HH}^2\dx\ddta \label{eq:P5}\\
	\leq&  -\frac{1}{2}\iQt \bigchi(\trC)\trr{\CC-\HH}^2 \dx\ddta + \delta\norm*{\na\trr{\CC-\HH}}_{L^2(L^2)}^2 \\
	 &\qquad+ c(\delta)\int_0^t \norm*{\trH}_4^4\norm*{\bigchi^\prime(\zeta)}_4^4\mathcal{E}_1(\u,\CC\vert\U,\HH)(\dta)\ddta\nonumber\\
	\leq&  -\frac{1}{2}\iQt \bigchi(\trC)\trr{\CC-\HH}^2 \dx\ddta + \delta\norm*{\na\trr{\CC-\HH}}_{L^2(L^2)}^2 \\
	&\qquad+ c(a,\delta)\norm*{\trH}_{L^\infty(L^4)}^4\int_0^t \norm*{\trr{\CC+\HH}}_4^4\mathcal{E}_1(\u,\CC\vert\U,\HH)(\dta)\ddta.  \nonumber
	\end{align}
	
	Here $\zeta$ denotes the convex combination of $\trC,\trH$ from the mean-value theorem. In the last integral we have used the regularity $\trH\in L^\infty(0,T;L^4(\Omega))$.
	
	We focus on the second relaxation terms and the corresponding terms  from $F(\CC), F(\HH)$ and find
	\begin{align}
	P_6:= &\frac{1}{2}\iQt  \Phi(\trC)\trC + \Phi(\trH)\trH  - \Phi(\trC)\trH - \Phi(\trH)\trC \dx\ddta\nonumber \\
	=& \frac{1}{2}\iQt (\Phi(\trC)-\Phi(\trH))\trr{\CC-\HH}\dx\ddta \nonumber\\
	=& \frac{1}{2}\iQt  \trr{\CC-\HH}^2 \dx\ddta\leq c\int_0^t\mathcal{E}_1(\u,\CC\vert\U,\HH)(\dta)\ddta.  \label{eq:P6}
	\end{align}

	Finally, we estimate the term corresponding to the upper convected derivative of $I_2$, cf. (\ref{eq:I2})  and the coupling terms to the Navier-Stokes equations from $I_1$, cf. (\ref{eq:uc_ucmpart}):
	\begin{align}
	P_7:= &\iQt  \trC\CC:\na\U + \trH\HH:\na\u - \tr{\CC\na\u}\trH - \tr{\HH\na\U}\trC \dx\ddta \nonumber\\
	=& \iQt \trr{\CC-\HH}\CC:\na\u - \trr{\CC-\HH}\HH:\na\U\dx\ddta\nonumber \\
	=& \iQt \trr{\CC-\HH}\CC:(\na\u-\na\U) + \trr{\CC-\HH}(\CC-\HH):\na\U \dx\ddta\nonumber\\
	\leq& \delta\norm*{\Du-\DU}_{L^2(L^2)}^2 + \delta\norm*{\na\trr{\CC-\HH}}_{L^2(L^2)}^2  + \delta\norm*{\na(\CC-\HH)}_{L^2(L^2)}^2 \nonumber\\
	&\qquad+ c(\delta)\int_0^t (\norm*{\CC}_r^s + \norm*{\na\U}_2^4)\mathcal{E}(\u,\CC\vert\U,\HH)(\dta)  \ddta.\label{eq:P7}
	\end{align}
	The regularity $\na\U\in L^4(0,T;L^2(\Omega))$ and $\CC\in L^s(0,T;L^r(\Omega))$ was used.
	To obtain the above estimate we have applied
	\begin{equation*}
	\int_0^t\norm*{\na\u -\na\U}_2\norm*{\trr{\CC-\HH}}_p\norm*{\CC}_q \ddta\leq \delta\norm*{\Du -\DU}_{L^2(L^2)}^2  + c(\delta)\int_0^t \norm*{\trr{\CC-\HH}}_p^2\norm*{\CC}_q^2 \ddta
	\end{equation*}
	for $\frac{1}{p}+\frac{1}{q}=\frac{1}{2}$, $p <6 $. Next we interpolate $\norm*{\trr{\CC-\HH}}_p$ between $L^2$ and $L^6$ with a given $\theta\in(0,1)$. Embedding of $L^6$ into $H^1$ and applying the Young inequality yields
	\begin{equation*}
	c(\delta)\int_0^t \norm*{\trr{\CC-\HH}}_p^2\norm*{\CC}_q^2 \ddta\leq \delta\norm*{\na\trr{\CC-\HH}}_{L^2(L^2)}^2 + c(\delta)\int_0^t \norm*{\trr{\CC-\HH}}_2^2\norm*{\CC}_{\frac{3}{1-\theta}}^{\frac{2}{\theta}} \ddta.
	\end{equation*}
	Setting $r=\frac{3}{1-\theta}$ yields  $\norm*{\CC}_{\frac{3}{1-\theta}}^{\frac{2}{\theta}}=\norm*{\CC}_r^{\frac{2r}{r-3}}$. But this is exactly the case when $\frac{2}{s}+\frac{3}{r}=1$ for $s\in(0,\infty), r\in(3,\infty)$.

	Note that $P_{7}$ is the only term in the trace-part where we have to estimate $\CC-\HH$ in the Frobenius norm instead of the trace norm.
	
	Summing up the estimates (\ref{eq:P1})-(\ref{eq:P2}) and  (\ref{eq:P3})-(\ref{eq:P7}) while using the notation of (\ref{eq:relen12}) yields
	\begin{align}
	\mathcal{E}_1(\u,\CC\vert\U,\HH)(t) &  + (1-3\delta)\int_0^t\mathcal{D}_1(\u,\CC\vert\U,\HH)(\tau)\ddta \leq  \mathcal{E}_1(\u,\CC\vert\U,\HH)(0)\label{eq:gronwal_e1}\\
	& \qquad\qquad\qquad\qquad+ c\int_0^t g_1(\tau)\mathcal{E}(\u,\CC\vert\U,\HH)(\dta)\ddta+ \delta\norm*{\na(\CC-\HH)}_{L^2(L^2)}^2,    \nonumber\\
	\mathcal{D}_1(\u,\CC\vert\U,\HH) &=  \eta\norm*{\Du-\DU}_2^2 + \frac{\varepsilon}{2}\norm*{\na\tr{\CC-\HH}}_2^2 +  \frac{1}{2}\norm*{\sqrt{\bigchi(\trC)}\tr{\CC-\HH}}_2^2, \nonumber \\
	g_1(\dta) &= 1 +  \norm*{\CC}_r^s + \norm*{\na\U}_2^4  + \norm*{\trH}_{L^\infty(L^4)}^4\norm*{\trr{\CC+\HH}}_4^4 + \norm*{\trH}_4^8  + \norm*{\U}_4^8. \nonumber
	\end{align}
	
	\subsection{Frobenius Energy}
	In this section we deal with the term $I_3$ arising from the expansion of the relative energy. It should be noted that the structure of the estimates are quite similar to the trace part of the equations.
	Since the weak solutions for $\CC$ is regular enough, direct calculations yield
	\begin{align}
	I_3=&\iQt \td \frac{1}{2}\snorm*{\CC-\HH}^2 \dx\ddta \nonumber\\
	=& \iQt  \tr{\frac{\partial }{\partial t}(\CC-\HH)(\CC-\HH)} \dx\ddta\nonumber \\
	=& 2\iQt \tr{\frac{\partial }{\partial t}\CC\cdot(\CC-\HH)} + \tr{\frac{\partial }{\partial t}\HH\cdot(\HH-\CC)} \dx\ddta \nonumber \\
	=& \iQt -(\u\cdot\na)\CC:(\CC-\HH) + [\na\u\CC + \CC\na\u^\top]:(\CC-\HH) \nonumber\\
	& - \bigchi(\trC)\CC:(\CC-\HH) + \Phi(\trC)\trr{\CC-\HH} - \varepsilon\na\CC:\na(\CC-\HH) \dx\ddta\nonumber\\
	& + \iQt -(\U\cdot\na)\HH:(\HH-\CC) + [\na\U\HH + \HH\na\U^\top]:(\HH-\CC)  \nonumber\\
	&- \bigchi(\trH)\HH:(\HH-\CC) + \Phi(\trH)\trr{\HH-\CC} - \varepsilon\na\HH:\na(\HH-\CC) \dx\ddta.\label{eq:I3}
	\end{align}
	
	In the above computations we have employed Theorem \ref{theo:cond-energy-eq} and inserted $\DD=\HH$ as test function in the weak formulation \eqref{eq:weak_sol_UC-2} for $\CC$ and $\DD=\CC$ as test function in the weak formulation \eqref{eq:weak_sol_UC-2} for $\HH$.
	
	We treat the above terms pairwise and start with the diffusive terms:
	\begin{align}
	Q_1:=-\iQt \varepsilon\na\CC:\na(\CC-\HH) + \varepsilon\na\HH:\na(\HH-\CC) \dx\ddta= - \varepsilon\iQt \snorm*{\na(\CC-\HH)}^2 \dx\ddta.\label{eq:Q1}
	\end{align}

	The convective terms can be estimated as
	\begin{align}
	Q_2:=&\iQt - (\u\cdot\na)\CC:(\CC-\HH)  -  (\u\cdot\na)\HH:(\HH-\CC) \dx\ddta\nonumber\\
	=& -\iQt [(\u\cdot\na)\CC:(\CC-\HH) -  (\U\cdot\na)\HH]:(\CC-\HH) \dx\ddta \nonumber\\
	=& -\iQt [(\u\cdot\na)(\CC-\HH):(\CC-\HH) -  ((\U-\u)\cdot\na)\HH]:(\CC-\HH) \dx\ddta\nonumber\\
	=&  -\iQt  ((\U-\u)\cdot\na)(\CC-\HH):\HH \dx\ddta \nonumber\\
	\leq& \delta\norm*{\na(\CC-\HH)}_{L^2(L^2)}^2 + \delta\norm*{\Du-\DU}_{L^2(L^2)}^2 +  c(\delta)\int_0^t \norm*{\HH}_4^8 \mathcal{E}(\u,\CC\vert \U,\HH)(\dta)\ddta. \label{eq:Q2}
	\end{align}
	Here we have applied the regularity $\HH\in L^8(0,T;L^4(\Omega))$.

    	The terms arising from the upper convected derivative  can be bounded in the following way:
	\begin{align}
	Q_3:=&\iQt  [\na\u\CC + \CC\na\u^\top]:(\CC-\HH) + [\na\U\HH + \HH\na\U^\top]:(\HH-\CC)\dx\ddta \nonumber \\
	=& \iQt [\na\u\CC + \CC\na\u^\top  -  \na\U\HH - \HH\na\U^\top  ]:(\CC-\HH) \dx\ddta \nonumber\\
	=& \iQt \trr{ \na\u\CC\cdot(\CC-\HH) - \na\U\HH\cdot(\CC-\HH) } \dx\ddta\nonumber \\
	=& \iQt \trr{  \na(\u-\U)\CC(\CC-\HH)  + \na\U(\CC-\HH)^2 } \dx\ddta \label{eq:Q3}\\
	\leq& \delta\norm*{\Du-\DU}_{L^2(L^2)}^2 + \delta\norm*{\na(\CC-\HH)}_{L^2(L^2)}^2    + c(\delta)\int_0^t (\norm*{\na\U}_2^4 + \norm*{\CC}_r^s) \mathcal{E}_2(\CC\vert \HH)(\dta) \ddta.  \nonumber
	\end{align}
	This estimate can be obtained following the same idea as in (\ref{eq:P7}). Again we need the regularity $\na\U\in L^4(0,T;L^2(\Omega))$ and $\CC\in L^s(0,T;L^r(\Omega))$.
	Next we focus on the relaxation terms. We start with
	\begin{align}
	Q_4:=&\iQt - \bigchi(\trC)\CC:(\CC-\HH) - \bigchi(\trH)\HH:(\HH-\CC) \dx\ddta\nonumber \\
	=& \iQt - \bigchi(\trC)\snorm*{\CC-\HH}^2 - (\bigchi(\trC) - \bigchi(\trH))\HH:(\CC-\HH)\dx\ddta \nonumber \\
	=& \iQt - \bigchi(\trC)\snorm*{\CC-\HH}^2 \dx\ddta +\iQt \bigchi^\prime(\zeta)\HH:(\CC-\HH)\trr{\CC-\HH}\dx\ddta \nonumber \\
	\leq& -\iQt \bigchi(\trC)\snorm*{\CC-\HH}^2 \dx\ddta  +c(\delta)\int_0^t (\norm*{\bigchi^\prime(\zeta)}_4^4\norm*{\HH}_4^4)\mathcal{E}_2(\CC|\HH)(\dta)\ddta  \nonumber\\
	\leq & -\iQt \bigchi(\trC)\snorm*{\CC-\HH}^2 \dx\ddta + \delta\norm*{\na(\CC-\HH)}_{L^2(L^2)}^2 \nonumber\\
	& \qquad+c(\delta,a)\int_0^t \norm*{\trr{\CC+\HH}}_4^4\norm*{\HH}_4^4\mathcal{E}_2(\CC|\HH)(\dta)\ddta\nonumber\\
		\leq & -\iQt \bigchi(\trC)\snorm*{\CC-\HH}^2 \dx\ddta + \delta\norm*{\na(\CC-\HH)}_{L^2(L^2)}^2 \nonumber\\
		&\qquad +c(\delta,a)\norm*{\HH}_{L^\infty(L^4)}^4\int_0^t \norm*{\trr{\CC+\HH}}_4^4\mathcal{E}_2(\CC|\HH)(\dta)\ddta. \label{eq:Q4}
	\end{align}
	Here $\zeta$ denotes the convex combination of $\trC, \trH$ from the mean-value theorem. Again the regularity $\HH\in L^\infty(0,T;L^4(\Omega))$ has been used.
	
	The second relaxation term is controlled by
	\begin{align}
	Q_5:=&\iQt  \Phi(\trC)\trr{\CC-\HH} + \Phi(\trH)\trr{\HH-\CC}\dx\ddta \nonumber \\
	=& \iQt (\Phi(\trC)-\Phi(\trH))\trr{\CC-\HH} \dx\ddta\nonumber \\
	=& \iQt \trr{\CC-\HH}^2\dx\ddta \leq  c\int_0^t \mathcal{E}_2(\CC|\HH)(\dta)\ddta. \label{eq:Q5}
	\end{align}

	Summing up the estimates (\ref{eq:Q1})-(\ref{eq:Q5}) yields
	\begin{align}
	\mathcal{E}_2(\CC\vert\HH)(t) & + (1-3\delta)\int_0^t\mathcal{D}_2(\CC\vert\HH)(\tau) \ddta\leq \mathcal{E}_2(\CC\vert\HH)(0)  + c\int_0^t\mathcal{E}(\u,\CC\vert\U,\HH)(\dta)g_2(\tau)\ddta  \nonumber \\
	&  \qquad\qquad\qquad\qquad\qquad \qquad +2\delta\norm*{\Du-\DU}_2^2 ,\nonumber\\
	\mathcal{D}_2(\CC\vert\HH) &= \norm*{\sqrt{\bigchi(\trC)}(\CC-\HH)}^2_2 + \varepsilon\norm*{\na(\CC-\HH)}_2^2,\nonumber  \\
	g_2(\dta) & = 1 + \norm*{\trr{\CC+\HH}}_4^4\norm*{\HH}_{L^\infty(L^4)}^4 + \norm*{\na\U}_2^4 + \norm*{\CC}_r^s.\label{eq:gronwallfrob}
	\end{align}

	\subsection{Gronwall-type argument}
	In order to obtain the desired result we have to combine estimates (\ref{eq:gronwal_e1}) and (\ref{eq:gronwallfrob}) to obtain the full relative energy inequality
	\begin{align}
	\mathcal{E}(\u,\CC\vert\U,\HH)(t) &+ (1-6\delta)\int_0^t \mathcal{D}(\u,\CC\vert\U,\HH)(\tau)\ddta \leq \mathcal{E}(\u,\CC\vert\U,\HH)(0) + c\int_0^t g(\dta)\mathcal{E}(\u,\CC\vert\U,\HH)(\dta)\ddta,\nonumber\\
	\mathcal{D}(\u,\CC\vert\U,\HH) &= \eta\norm*{\Du-\DU}_2^2 + \frac{\varepsilon}{2}\norm*{\na\tr{\CC-\HH}}_2^2 +  \frac{1}{2}\norm*{\sqrt{\bigchi(\trC)}\tr{\CC-\HH}}_2^2 \nonumber \\
	&\;\;+  \norm*{\sqrt{\bigchi(\trC)}\CC-\HH}^2_2 + \varepsilon\norm*{\na\CC-\na\HH}_2^2, \nonumber \\
	g(\dta) &\leq 1 + \norm*{\na\U}_2^4  + \norm*{\HH}_{L^\infty(L^4)}^4\norm*{\trr{\CC+\HH}}_4^4  + \norm*{\U}_4^8  + \norm*{\CC}_r^s.   \label{eq:rel_gron}
	\end{align}
	
	Since $\mathcal{D}$ is by construction non-negative, $b$ is positive for $\delta$ small enough and $g\in L^1(0,T^\dagger)$ we can apply the Gronwall Lemma \ref{lem:gronwall} and obtain
	\begin{align}
	\mathcal{E}(\u,\CC\vert\U,\HH)(t) \leq \mathcal{E}(\u,\CC\vert\U,\HH)(0)\exp\(c\int_0^t g(\dta) \ddta\), \label{eq:wsc}
	\end{align}
	which concludes to proof of Theorem \ref{theo:relative_energy}. Observe that for $\u_0=\U_0, \CC_0=\HH_0$ it holds that $\mathcal{E}(\u,\CC\vert\U,\HH)(0)=0$, cf. (\ref{eq:relen_prop}). We obtain Theorem \ref{theo:wsu}.
	
	\section{Relative Energy and Numerical Convergence}
	In this section we illustrate an application of the relative energy inequality (\ref{eq:rel_gron}) in numerical analysis. The relative energy is an appropriate distance to measure the convergence of numerical schemes. 
	In this paper we apply the Lagrange-Galerkin finite element method, that is based on piecewise linear approximations per element. We work here with tetrahedral meshes. The material derivative is approximated by means of characteristics, \cite{LukacovaMedvidova.2017a,LukovMedvidov2017b}. Specifically, our computational domain $\Omega=[0,1]^3$, is triangulated uniformly with a mesh size $h$. The main ingredients of the method in \cite{LukacovaMedvidova.2017a,LukovMedvidov2017b} are the following
	\begin{itemize}
		\item $\mathbb{P}_1$ elements for $\u, p, \CC$.
		\item Brezzi-Pitkäranta stabilization for the pressure $p$.
		\item Fixed-point iteration to solve a coupled nonlinear system for $(\u, p ,\CC)$.
	\end{itemize}
	
	 We recall that in three space dimensions the relative energy is given by
	\begin{align*}
	\mathcal{E}(\u,\CC\vert\U,\HH):= \int_\Omega \frac{1}{2}\snorm*{\u-\U}^2 + \frac{1}{4}\snorm*{\trr{\CC-\HH}}^2  + \frac{1}{2}\snorm*{\CC-\HH}^2 \dx.
	\end{align*}

	In order to measure the convergence order of the Lagrange-Galerkin method we fix the numerical solution with the highest resolution to be the reference solution $\u_{\textrm{ref}}, \CC_{\textrm{ref}}$. Now the error can be computed as
	\begin{equation*}
	\mathcal{E}_{h}(\u_h,\CC_h\vert \u_{\textrm{ref}}, \CC_{\textrm{ref}}).
	\end{equation*}
	In order to measure the rate of convergence we compute the experimental error of convergence (EOC) by
	\begin{align*}
	\mathrm{EOC}_h= \log_2\( \frac{\mathcal{E}_{h}(\u_h,\CC_h\vert \u_{\textrm{ref}}, \CC_{\textrm{ref}})}{\mathcal{E}_{h/2}(\u_{h/2},\CC_{h/2}\vert \u_{\textrm{ref}}, \CC_{\textrm{ref}})} \).
	\end{align*}
	\begin{Remark}
		In our situation the relative energy satisfies the properties of a norm. However, in general the relative energy does not need to satisfy symmetry and the triangle inequality. Hence, one has to be careful when computing the EOC.
	\end{Remark}

	\textbf{Experiment:} We choose the following initial data $$\u_1(0)=\u_2(0)=\u_3(0)=\sin(2\pi x)\sin(2\pi y)\sin(2\pi z), \CC_0=\frac{1}{\sqrt{3}}\I.$$ The model parameters are set to  $\eta=2 ,\varepsilon = 1, a= 0$ with final time $T=1$.  \\[0.5em]
	Our extensive numerical experiments for $d=2,3$ confirm that the numerical solutions are positive definite even for $a=0$. Furthermore, at least experimentally the free energy decreases in time.

	\begin{figure}[H]
		\centering
		\includegraphics[trim={3.cm 0.0cm 3.cm 0.0cm},clip,scale=0.45]{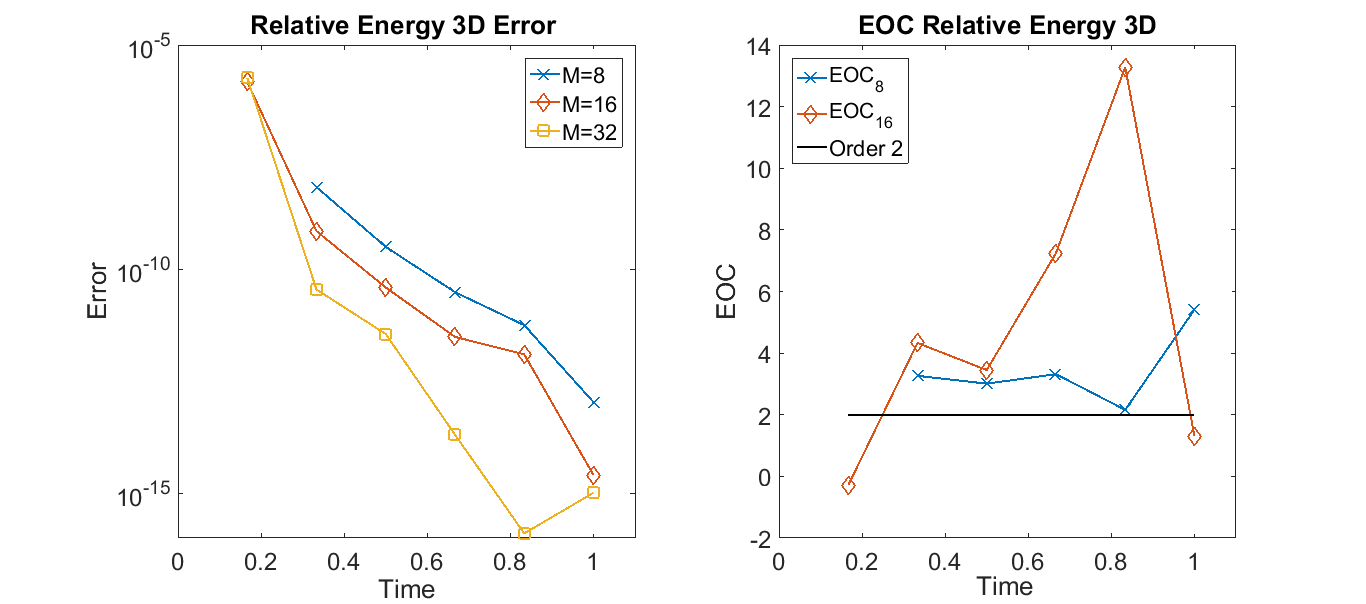}
		\caption{Experiment: Time evolution of the relative energy on a hierarchy of meshes(left) and the convergence order(right). $M$ is the number of points per side, i.e. it relates to $h$.}
		\label{fig:eoc3d}
	\end{figure}
	The first observation is that experimentally the scheme is converging. As time evolves the convergence order of approximately 2 is achieved as expected for squared norms, see also our recent works on the error analysis of the Lagrange-Galerkin method using more standard tools \cite{LukacovaMedvidova.2017a,LukovMedvidov2017b}.

	\section{Conclusion}
	In this work we have proven the existence of global weak solutions for the Peterlin viscoelastic system (\ref{eq:uc_model}) in three space dimensions, see Theorem \ref{thm:existence}. Our proof is based on a combination of the Galerkin method for the incompressible Navier-Stokes equations and a semigroup approach for the time evolution of the conformation tensor. This approach allows to deduce the positive-(semi) definiteness of the tensor $\CC$ which is necessary from the physical point of view. In fact, in  two space  dimensions it is possible to prove existence of weak solutions without showing positive-(semi) definiteness. On the other hand in three space dimensions this property is a crucial ingredient of the existence proof of the conformation tensor.

	Moreover, we provided a regularity result, see Theorem~\ref{theo:cond-energy-eq}, applying methods from the semigroup theory. Using the latter result allows us to apply the relative energy method, see Theorem \ref{theo:relative_energy} which ultimately provides the weak-strong uniqueness result, see Theorem~\ref{theo:wsu}. The relative energy method has wide applications. We illustrate its use as a suitable metric in the experimental convergence study of the Lagrange-Galerkin method. In the future, it will be interesting to analyse theoretically the convergence of Lagrange-Galerkin method using the relative energy.
	
	\section*{Acknowledgement}
	This research of A.B. and M.L. was supported by the German Science Foundation (DFG) under the Collaborative Research Center TRR~146 Multiscale Simulation Methods for Soft Matters (Project~C3). M.L. gratefully acknowledges the support of the Gutenberg Research College fellowship of
the University Mainz. The research of  Y.L. has been supported by the Recruitment Program of Global Experts of China. We would like to thank P.~Tolksdorf, M.~Bachmayr, B.~She and H.~Egger for fruitful discussions on the topic.

	\bibliography{literature_dots}
	\bibliographystyle{abbrv}
\end{document}